\newtheorem{theorem}{Theorem}[section]
\newtheorem{proposition}[theorem]{Proposition}
\newtheorem{lemma}[theorem]{Lemma}
\newtheorem{corollary}[theorem]{Corollary}
\theoremstyle{definition}
\newtheorem{definition}[theorem]{Definition}
\theoremstyle{remark}
\newtheorem{remark}[theorem]{Remark}
\numberwithin{equation}{section}
\newcommand{\SLC}{\mathrm{SL}(2,\mathbb{C})}
\newcommand{\PSLC}{\mathrm{PSL}(2,\mathbb{C})}
\newcommand{\GLC}{\mathrm{GL}(2,\mathbb{C})}
\newcommand{\PGLC}{\mathrm{PGL}(2,\mathbb{C})}
\newcommand{\SLR}{\mathrm{SL}(2,\mathbb{R})}
\newcommand{\PSLR}{\mathrm{PSL}(2,\mathbb{R})}
\newcommand{\PGLR}{\mathrm{PGL}(2,\mathbb{R})}
\newcommand{\Hom}{\mathrm{Hom}}
\DeclareMathOperator{\tr}{tr}
\begin{document}

\title{Parametrization of $\PSLC$-representations of surface groups}
\author{Yuichi Kabaya}
\address{Department of Mathematics, Osaka University, 
Toyonaka, Osaka, 560-0043, JAPAN}
\email{y-kabaya@cr.math.sci.osaka-u.ac.jp}

\subjclass[2000]{
30F60; 
32G15; 
57M50; 
}
\keywords{$\PSLC$-representations of surface groups, Fenchel-Nielsen coordinates}

\begin{abstract}
For an oriented surface of genus $g$ with $b$ boundary components, 
we construct a rational map from a subset of $\mathbb{C}^{6g-6+3b}$ onto an open algebraic subset of the $\PSLC$-character variety 
as an analogue of the Fenchel-Nielsen coordinates.
After taking the quotient by an action of a finite group, we obtain a parametrization of a subset of the $\PSLC$-character variety, 
and similarly for the $\SLC$-character variety.
We can systematically calculate a set of matrix generators by rational functions of the parameters. 
We give transformation formulae under elementary moves of pants decompositions.
\end{abstract}

\maketitle


\section{Introduction}
$\PSLC$-representations of surface groups appear in various areas of low dimensional topology and geometry;
in the study of Kleinian groups, complex projective structures, Teichm\"uller spaces, etc.
In this paper, we give a parametrization of $\PSLC$-representations of a surface group as an analogue of Fenchel-Nielsen coordinates.

Let $S=S_{g,b}$ be a surface of genus $g$ with $b$ boundary components.
In this paper, we assume that the Euler characteristic of $S$ is negative, hence $S$ admits a hyperbolic structure with geodesic boundary.
The $\SLC$-character variety of $S$ is, roughly, the set of all representations of $\pi_1(S)$ into $\SLC$ up to conjugacy.
(See  \S \ref{subsec:char_var} for a precise description of the character variety.)
The $\PSLC$-character variety of $S$ is also defined.
The space of marked hyperbolic structures on $S$ is called the Teichm\"uller space of $S$.
Since a marked hyperbolic structure induces a discrete faithful representation of $\pi_1(S)$ into $\PSLR \subset \PSLC$ (Fuchsian representation), 
the Teichm\"uller space can be regarded as a subspace of the $\PSLC$-character variety.

The Fenchel-Nielsen coordinates give a parametrization of the Teichm\"uller space.
They are defined based on a pants decomposition, a collection of disjoint simple closed curves on $S$ which cut $S$ into three-holed spheres (pants).
We call a curve of a pants decomposition or a boundary curve of $S$ a \emph{pants curve}.
The Fenchel-Nielsen coordinates consist of the length and twist parameters about the pants curves.
It is known that the hyperbolic structure on a three-holed sphere is completely determined by the lengths of the boundary curves.
Thus, to recover the hyperbolic structure on $S$, we have to keep track of how these pairs of pants are glued along the common pants curves. 
The Fenchel-Nielsen twist parameter of an interior pants curve measures how two pairs of pants are twisted along the pants curve (see Figure \ref{fig:FN_twist.pstex_t}).
It is well-known that the set of the length and twist parameters gives a parametrization of the Teichm\"uller space.
The Fenchel-Nielsen coordinates are complexified by Tan \cite{tan} and Kourouniotis \cite{kourouniotis}, which parametrize the quasi-Fuchsian representations. 

We shall give a parametrization of an open algebraic subset of the $\PSLC$-character variety, which contains all quasi-Fuchsian representations.
Since our parametrization is only based on some elementary properties of matrices, we can systematically calculate a set of matrix generators in terms of the parameters. 
(See examples in Section \ref{sec:example}; four-holed sphere, one-holed torus and closed genus two surface.)
These matrices have entries which are rational functions of the parameters, we can also parametrize $\mathrm{PGL}(2,\mathbb{F})$ representations 
for any subfield $\mathbb{F} \subset \mathbb{C}$.
We remark that a set of explicit matrix generators was given by \cite{okai_explicit} and \cite{maskit} for Fuchsian representations in terms of the Fenchel-Nielsen coordinates. 
Our explicit description enables us to give transformation formulae under changes of pants decompositions (Section \ref{sec:change_to_shear_coodinates}).

The idea of this paper is simple: use the eigenvalues of the pants curves instead of the length (or trace) functions.
By using the eigenvalues, we can keep track of the information of the fixed points.
Recall that $\PSLC$ acts on the projective space $\mathbb{C}P^1$.  
Let $A$ be an element of $\SLC$ which has two fixed points $x$ and $y$ on $\mathbb{C}P^1$.
We denote by $e$ the eigenvalue corresponding to the fixed point $x$, i.e. $x$ is the projective class of the eigenvector of $A$ corresponding to $e$.
Hence $y$ is the fixed point corresponding to $e^{-1}$.
Then the matrix $A$ is uniquely determined by the triple $(e,x,y)$. 
(See (\ref{eq:two_fixed_points_case}) for its explicit form.) 
Let $P$ be a pair of pants and take a set of generators $\gamma_1, \gamma_2, \gamma_3$ of $\pi_1(P)$ corresponding to the three boundary curves 
so that $\gamma_1 \gamma_2 \gamma_3 = 1$ (see Figure \ref{fig:generators_for_pants}).
Let $\rho$ be an irreducible $\SLC$-representation such that $\rho(\gamma_i)$ has two fixed points. 
We denote one of the eigenvalue of $\rho(\gamma_i)$ by $e_i$ and let $x_i$ (resp. $y_i$) be the fixed point corresponding to $e_i$ (resp. ${e_i}^{-1}$).
From the relation $\gamma_1 \gamma_2 \gamma_3 = 1$, we will see in Proposition \ref{prop:pants_rep} that $\rho$ is uniquely determined by $(e_i, x_i)_{i=1,2,3}$.
Conversely, for any three complex numbers $e_1,e_2,e_3$ (satisfying some conditions) 
and three distinct points $x_1,x_2,x_3$ of $\mathbb{C}P^1$, 
we can construct such an $\SLC$-representation.
While the triple $e_1,e_2,e_3$ determines a conjugacy class of the representation $\rho$, 
the additional information of the fixed points $x_1, x_2, x_3$ determines a representative in the conjugacy class containing $\rho$.

To glue two representations along a pants curve, the information of the fixed points is still useful. 
Let $P$ and $P'$ be two pairs of pants.
We denote the boundary curves of $P$ by $c_1$, $c_2$, $c_3$ and the boundary curves of $P'$ by $c_1$, $c_4$, $c_5$.
We construct a $\SLC$-representation of the fundamental group of $P \cup_{c_1} P'$ from representations of $\pi_1(P)$ and $\pi_1(P')$.
We take a set of generators $\gamma_1, \gamma_2, \gamma_3$ of $\pi_1(P)$ and a set of generators $\gamma_1, \gamma_4, \gamma_5$ of $\pi_1(P')$ as before. 
We denote the representation of $\pi_1(P)$ obtained from $(e_i,x_i)_{i=1,2,3}$ by $\rho$ 
and the representation of $\pi_1(P')$ obtained from $(e'_i,x'_i)_{i=1,4,5}$ by $\rho'$.
To glue these representations, they must satisfy  $\rho(\gamma_1) = \rho'(\gamma_1)^{-1}$, 
thus both $e'_1 = {e_1}^{-1}$ and $(x_1,y_1) = (x'_1,y'_1)$ where $y_1$ and $y'_1$ are the other fixed point of $\rho(\gamma_1)$ and $\rho'(\gamma_1)$ respectively.
So we assume that $e'_1 = {e_1}^{-1}$.
The condition $(x_1,y_1) = (x'_1,y'_1)$ can be achieved by conjugating $\rho'$ by an element of $\PSLC$.
Although there is a one-parameter family of such conjugating elements, the \emph{twist parameter} of $c_1$ characterizes that in the one-parameter family
by measuring the relative positions of the fixed points $x_2$ and $x'_5$.
(See Section \ref{sec:twist_paramter} for the precise definition.)

In the definition of the twist parameter, we need to introduce an oriented graph dual to the pants decomposition 
to keep track of the relative positions of the fixed points.
This dual graph will also be used to give a presentation of $\pi_1(S)$ (Section \ref{sec:pants_decomp}).
According to this presentation, we will reconstruct a set of matrix generators from the eigenvalue and twist parameters (Section \ref{sec:matrix_generators}).
Thus the set of eigenvalue and twist parameters determines a $\PSLC$-representation up to conjugation.
This gives a rational map from an open algebraic subset of $\mathbb{C}^{6g-6+3b}$ onto an open algebraic subset of the $\PSLC$-character variety.
After taking a covering space of the parameter space, we can also construct a map to the $\SLC$-character variety.

Since there exist two choices of eigenvalues for each pants curve, these parameters are not invariant under conjugation, unlike the length (or trace) functions.
But once we fix all the eigenvalue parameters, the twist parameters are invariant under conjugation.
Thus the system of the eigenvalue and twist parameters gives a multi-valued map from a subset of the $\PSLC$-character variety to $\mathbb{C}^{6g-6+3b}$.
Changing the choices of eigenvalues is described by an action of $(\mathbb{Z}/2\mathbb{Z})^{3g-3+2b}$ which acts as $e_i \to {e_i}^{\pm 1}$.
This action affects the twist parameters, which will be described in Section \ref{sec:action}.
After taking the quotient by the action of $(\mathbb{Z}/2\mathbb{Z})^{3g-3+2b}$, and further taking the quotient by the action of the group related to 
the lifting of $\PSLC$ to $\SLC$, we obtain an injective map from the quotient of the parameter space to a subset of the $\PSLC$-character variety.
The precise statement will be given in Theorem \ref{thm:main_PSLC}, and Theorem \ref{thm:main_SLC} for the $\SLC$-character variety.

This parametrization is also described by using the ideal triangulation of the surface associated to the pants decomposition.
The author briefly noted in \cite{kabaya} the parametrization from this point of view.
We will interpret our eigenvalue and twist parameters in terms of ideal triangulations in Section \ref{sec:developing_map}, 
and apply this to study Fuchsian representations in Section \ref{sec:psl2r}.

Since our coordinates are based on a pants decomposition with a dual graph, 
it is natural to consider their transformation under a change of pants decomposition.
We introduce five types of moves between pants decompositions with dual graphs.
We will show that any two pants decompositions with dual graphs are related by a sequence of these moves and give transformation formulae for these moves.


This paper is organized as follows.
In Section \ref{sec:basicx}, we review basic properties of $\PSLC$ and the definition of the character variety.
In Section \ref{sec:rep_of_pants}, we give a parametrization of the representations of the fundamental group of a pair of pants.
In Section \ref{sec:pants_decomp}, we define pants decompositions and their dual graphs. 
We define the eigenvalue parameters in Section \ref{sec:eigenvalue_paramter} and the twist parameters in Section \ref{sec:twist_paramter}.
In Section, \ref{sec:matrix_generators}, we shall show that the set of the eigenvalue and twist parameters gives coordinates of a subset of the character variety 
and we give a set of matrix generators in terms of the eigenvalue and twist parameters.
After giving examples in Section \ref{sec:example}, we describe the behavior of the twist parameters 
under the action of $(\mathbb{Z}/2\mathbb{Z})^{3g-3+2b}$, which exchanges the choices of the eigenvalues, in Section \ref{sec:action}.
In Section \ref{sec:transformation}, we define moves between pants decompositions with dual graphs and give transformation formulae for each of these moves. 
In Section \ref{sec:developing_map}, we review the notion of developing maps, which will be used in the remaining sections. 
In Section \ref{sec:psl2r}, we restrict our attention to $\PSLR$-representations, especially Fuchsian representations.
We compare our twist parameters with the usual Fenchel-Nielsen twist parameters.
In the last two sections, we compare our coordinates with the exponential shear-bend coordinates.

\noindent
{\bf Acknowledgements.}
I would like to thank Toshihiro Nakanishi for inviting me to the 53th Symposium on Function Theory at Nagoya in November 2010.
This work was started when I was preparing for the talk.
I also thank Akira Ushijima for his interest in this work.
This work was supported by JSPS Research Fellowships for Young Scientists.

\section{Basic facts on $\mathrm{PSL}(2, \mathbb{C})$}
\label{sec:basicx}
%
%
\subsection{}
\label{subsec:basic_facts_on_PSL}
Let $\mathbb{F}$ be a subfield of $\mathbb{C}$. 
We let
\[
\mathrm{PGL}(2, \mathbb{F}) = \mathrm{GL}(2, \mathbb{F}) / \mathbb{F}^{*}, \quad \mathrm{PSL}(2,\mathbb{F}) = \mathrm{SL}(2,\mathbb{F}) / \{\pm I\},
\]
where $\mathbb{F}^* = \mathbb{F} \setminus \{0\}$ acts on $\mathrm{GL}(2, \mathbb{F})$ by scalar multiplication.
The inclusion map $\mathrm{SL}(2,\mathbb{F}) \to \mathrm{GL}(2,\mathbb{F})$ induces a homomorphism $\mathrm{PSL}(2,\mathbb{F}) \to \mathrm{PGL}(2,\mathbb{F})$, 
and it has an inverse
\[
\mathrm{PGL}(2,\mathbb{F}) \ni A \mapsto \frac{1}{\sqrt{\det A}} A \in \mathrm{PSL}(2,\mathbb{F}),
\]
if $\mathbb{F}$ has a square root for any element of $\mathbb{F}$ (e.g. $\mathbb{F} = \mathbb{C}$).
We remark that there are two choices of square roots, but the matrix in $\mathrm{PSL}(2,\mathbb{F})$ is uniquely determined.

The left action of $\mathrm{GL}(2;\mathbb{F})$ on $\mathbb{F}^2$ induces a left action of $\mathrm{GL}(2;\mathbb{F})$ (also $\mathrm{PGL}(2;\mathbb{F})$) on 
the projective line $\mathbb{F}P^1$ over $\mathbb{F}$.
If we regard $\mathbb{F}P^1$ as $\mathbb{F} \cup \{ \infty \}$, 
the action is given by 
\[
\begin{pmatrix} a & b \\ c & d \end{pmatrix} \cdot z = \frac{az+b}{cz+d}.
\]
The projective class of an eigenvector of $A \in \mathrm{GL}(2,\mathbb{F})$ corresponds to a fixed point of the action of $A$ on $\mathbb{C}P^1$ 
via the projection map $\mathbb{C}^2 \setminus \{0\} \to \mathbb{C}P^1$.
Any non-identity element of $\mathrm{PGL}(2,\mathbb{F})$ has one or two fixed points on $\mathbb{C}P^1$.

Let $A$ be an element of $\mathrm{SL}(2,\mathbb{F})$ with two distinct fixed points.
Choose one of the eigenvalue $e$ of $A$. 
We let $x$ be the fixed point of $A$ determined by the projective class of the eigenvector of $A$ corresponding to $e$.
Then the other fixed point $y$ is determined by the eigenvector corresponding to $e^{-1}$. 
Geometrically, $x$ is the attractive fixed point and $y$ is the repelling fixed point if $|e| > 1$.
Then $A$ is uniquely determined by $(e;x,y)$ and given by
\begin{equation}
\label{eq:two_fixed_points_case}
\begin{split}
M(e;x,y) &=
\begin{pmatrix} x & y \\ 1 & 1 \end{pmatrix} 
\begin{pmatrix} e & 0 \\ 0 & e^{-1} \end{pmatrix}
\begin{pmatrix} x & y \\ 1 & 1  \end{pmatrix}^{-1} \\
& = \frac{1}{x-y} \begin{pmatrix} ex - e^{-1}y & -(e-e^{-1})xy \\ e-e^{-1} & -ey + e^{-1}x \end{pmatrix}.
\end{split}
\end{equation}
We denote this matrix by $M(e;x,y)$.
For example, $M(e;\infty,0) = \begin{pmatrix} e & 0 \\ 0 & e^{-1} \end{pmatrix}$ and $M(e;0,\infty)=\begin{pmatrix} e^{-1} & 0 \\ 0 & e \end{pmatrix}$.
We have $M(e;x,y) = M(e^{-1}; y,x)$ and $M(e;x,y)^{-1} = M(e^{-1}; x,y) =M(e;y,x)$.
If $x$, $y$ and $e$ are in $\mathbb{F}P^1$, then the matrix is an element of $\mathrm{SL}(2, \mathbb{F})$.
Conversely any element $A$ of $\mathrm{SL}(2,\mathbb{F})$ 
has such a form if $t^2 - (\tr A )t + 1 =0$ has two solutions in $\mathbb{F}$. 

The following well-known facts play important roles in our description of $\mathrm{PSL}(2.\mathbb{F})$-representations.
\begin{lemma}
\label{lem:two_fixed_pts_with_one_point_behavior}
Let $x$ and $y$ be distinct points on $\mathbb{F}P^1$.
Let $z_1$ and $z_2$ be points on $\mathbb{F}P^1$ different from $x$ and $y$. ($z_1$ and $z_2$ may coincide.)  
Then there exists a unique $t \in \mathbb{C}^*$ up to sign such that $M(t;x,y)$ sends $z_1$ to $z_2$.
\end{lemma}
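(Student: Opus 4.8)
The plan is to normalize the two fixed points and reduce the problem to a single scalar equation. First I would conjugate everything by a M\"obius transformation $g$ with $g(x) = \infty$ and $g(y) = 0$; for instance $g(z) = (z-y)/(z-x)$, which makes sense since $x \ne y$. Because eigenvalues and fixed points transform naturally under conjugation, the uniqueness of the matrix $M(e;x,y)$ recalled in (\ref{eq:two_fixed_points_case}) gives
\[
g \cdot M(t;x,y) \cdot g^{-1} = M(t;\infty,0) = \begin{pmatrix} t & 0 \\ 0 & t^{-1} \end{pmatrix},
\]
whose action on $\mathbb{C}P^1$ is the scaling $z \mapsto t^2 z$.

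Next I would set $w_1 = g(z_1)$ and $w_2 = g(z_2)$. Since $z_1, z_2$ are different from both $x$ and $y$, their images satisfy $w_1, w_2 \in \mathbb{C}^* = \mathbb{C}P^1 \setminus \{0,\infty\}$. The condition that $M(t;x,y)$ send $z_1$ to $z_2$ is, after conjugating by $g$, equivalent to $M(t;\infty,0)(w_1) = w_2$, that is $t^2 w_1 = w_2$. So I must solve $t^2 = w_2/w_1$, where explicitly $w_2/w_1 = \frac{(z_2-y)(z_1-x)}{(z_2-x)(z_1-y)}$.

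Existence and uniqueness then follow from the behavior of squaring on $\mathbb{C}^*$. As $w_2/w_1 \in \mathbb{C}^*$ and $\mathbb{C}$ is algebraically closed, a square root $t$ exists; and since the fibers of the map $t \mapsto t^2$ on $\mathbb{C}^*$ are exactly the pairs $\{t,-t\}$, the value of $t$ is determined up to sign. Moreover $-t$ really does give the same transformation, because $M(-t;x,y) = -M(t;x,y)$ agrees with $M(t;x,y)$ in $\PSLC$, so ``unique up to sign'' is precisely the correct statement.

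I do not expect any deep difficulty here; the only points requiring care are in the reduction step and in the bookkeeping of degenerate configurations. Specifically, one must check that $w_1, w_2$ are genuinely nonzero and finite, which is exactly where the hypotheses $z_1, z_2 \ne x,y$ enter, and that the conjugation identity above still holds when $x$ or $y$ equals $\infty$; both are routine once the normalizing map $g$ is fixed. The coincident case $z_1 = z_2$ simply yields $t^2 = 1$, consistent with $M(\pm 1;x,y)$ being the identity of $\PSLC$.
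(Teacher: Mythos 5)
Your proof is correct and is essentially the paper's argument: both reduce the condition $M(t;x,y)\cdot z_1 = z_2$ to the single equation $t^2 = \frac{(x-z_1)(y-z_2)}{(x-z_2)(y-z_1)}$ (the cross ratio $[y:x:z_1:z_2]$), the only cosmetic difference being that you first conjugate by $g(z)=(z-y)/(z-x)$ to diagonalize $M(t;x,y)$ while the paper substitutes directly into the explicit matrix formula (\ref{eq:two_fixed_points_case}). Your added remarks on the hypotheses $z_1,z_2\neq x,y$ and on $M(-t;x,y)=-M(t;x,y)$ are accurate.
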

\begin{proof}
Since
\[
M(t;x,y) \cdot z_1 = \frac{(tx - t^{-1}y)z_1 -(t-t^{-1})xy}{ (t-t^{-1})z_1 -ty + t^{-1}x} = z_2,
\]
we have 
\[
t^2 = \frac{(x-z_1)(y-z_2)}{(x-z_2)(y-z_1)}.
\]
(This is equal to the cross ratio $[y:x:z_1:z_2]$, see (\ref{eq:cross_raio}).)
Therefore $t$ is well-defined up to sign.
\end{proof}

\begin{lemma}
\label{lem:thrice_transitive}
There exists a unique element of $\mathrm{PGL}(2,\mathbb{F})$ which sends any three distinct points $(x_1,x_2,x_3)$ of $\mathbb{\mathbb{F}}P^1$ 
to other three distinct points $(x'_1,x'_2, x'_3)$.
The matrix is given by 
\[
\frac{1}{\sqrt{(x_1-x_2)(x_2-x_3)(x_3-x_1)(x'_1-x'_2)(x'_2-x'_3)(x'_3-x'_1)}} \begin{pmatrix} a_{11} & a_{12} \\ a_{21} & a_{22} \end{pmatrix}
\]
where
\[
\begin{split}
a_{11} &= x_1 x'_1(x'_2-x'_3)+ x_2 x'_2(x'_3-x'_1)+x_3 x'_3(x'_1-x'_2), \\
a_{12} &=x_1 x_2 x'_3(x'_1-x'_2)+x_2 x_3 x'_1(x'_2-x'_3)+ x_3 x_1 x'_2(x'_3-x'_1), \\
a_{21} &=x_1(x'_2-x'_3)+x_2(x'_3-x'_1)+x_3(x'_1-x'_2), \\
a_{22} &=x_1 x'_1(x_2-x_3)+x_2 x'_2(x_3-x_1)+x_3 x'_3(x_1-x_2). \\
\end{split}
\]
\end{lemma}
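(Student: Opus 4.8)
The plan is to establish uniqueness first, then existence, and finally to confirm that the displayed matrix realizes the required transformation.

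\emph{Uniqueness.} Suppose $A,B \in \mathrm{PGL}(2,\mathbb{F})$ both send $(x_1,x_2,x_3)$ to $(x'_1,x'_2,x'_3)$. Then $A^{-1}B$ fixes each of $x_1,x_2,x_3$. Since these are three distinct points of $\mathbb{F}P^1 \subset \mathbb{C}P^1$, and any non-identity element of $\mathrm{PGL}(2,\mathbb{F})$ has at most two fixed points on $\mathbb{C}P^1$ (as recalled just before the statement), $A^{-1}B$ must be the identity, so $A=B$. This already shows that at most one such element exists.

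\emph{Existence.} It suffices to produce, for any triple, an element sending it to the standard triple $(0,1,\infty)$, since the desired transformation is then the composition $N^{-1}M$, where $M$ sends $(x_1,x_2,x_3)\to(0,1,\infty)$ and $N$ sends $(x'_1,x'_2,x'_3)\to(0,1,\infty)$. The map $z\mapsto \frac{(z-x_1)(x_2-x_3)}{(z-x_3)(x_2-x_1)}$ does the job for the unprimed triple and corresponds to the matrix $\begin{pmatrix} x_2-x_3 & -x_1(x_2-x_3) \\ x_2-x_1 & -x_3(x_2-x_1)\end{pmatrix}$, whose entries lie in $\mathbb{F}$ and whose determinant is $(x_1-x_2)(x_2-x_3)(x_3-x_1)\neq 0$ by distinctness of the $x_i$ (the case where one point is $\infty$ is covered by reading off the projective coordinates of this same matrix). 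Composing two such maps yields an element of $\mathrm{PGL}(2,\mathbb{F})$ with the required property.

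\emph{The explicit formula and the main obstacle.} Carrying out the composition $N^{-1}M$ and clearing the denominator from $N^{-1}=\frac{1}{\det N}\,\mathrm{adj}(N)$ produces a matrix whose entries are the $a_{ij}$ in the statement, with determinant equal to $(x_1-x_2)(x_2-x_3)(x_3-x_1)(x'_1-x'_2)(x'_2-x'_3)(x'_3-x'_1)$ up to sign; this is exactly the quantity under the square root, so the normalizing scalar places the matrix in $\SLC$. Rather than expand the product, I would verify the formula directly by checking that $\begin{pmatrix} a_{11} & a_{12} \\ a_{21} & a_{22}\end{pmatrix}$ sends $x_1$ to $x'_1$, i.e.\ the polynomial identity $a_{11}x_1+a_{12}=x'_1(a_{21}x_1+a_{22})$. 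The key simplification is that each $a_{ij}$ is invariant under the simultaneous cyclic permutation $x_1\to x_2\to x_3\to x_1$, $x'_1\to x'_2\to x'_3\to x'_1$; hence once $x_1\mapsto x'_1$ is established, the cases $x_2\mapsto x'_2$ and $x_3\mapsto x'_3$ follow automatically. The only genuine labor is this single polynomial identity, a routine but lengthy collection of terms that uses no input beyond distinctness of the points.
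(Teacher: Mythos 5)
Your argument is correct, and it diverges from the paper's proof in two useful ways. The paper proves both existence and uniqueness in one stroke: it writes down the matrix $A$ sending $(0,\infty,1)$ to $(x_1,x_2,x_3)$ by solving linear conditions on the entries, does the same for $B$ and the primed triple, and takes $X=BA^{-1}$; uniqueness is absorbed into the observation that any $X$ with the required property forces $XA$ to agree with $B$ up to scalar. You instead (i) get uniqueness conceptually from the fact that a non-identity element of $\mathrm{PGL}(2,\mathbb{F})$ has at most two fixed points --- cleaner and independent of any computation --- and (ii) verify the displayed formula by checking the single polynomial identity $a_{11}x_1+a_{12}=x'_1(a_{21}x_1+a_{22})$ and then invoking the invariance of each $a_{ij}$ under the simultaneous cyclic permutation of the two triples. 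That symmetry observation is genuinely the right economy: it replaces the expansion of $BA^{-1}$ by one routine identity, and I checked that the identity does hold and that all four $a_{ij}$ are indeed cyclically invariant. Two small points you should not leave implicit. First, the point-mapping identities alone do not yet show the matrix is nonsingular (if $(x_1,1)$ lay in the kernel the identity would hold vacuously at $x_1$), and the lemma's normalization explicitly requires $\det(a_{ij})=(x_1-x_2)(x_2-x_3)(x_3-x_1)(x'_1-x'_2)(x'_2-x'_3)(x'_3-x'_1)$, so that determinant computation is part of the statement and cannot be waved away "up to sign" --- though a one-line sanity check (e.g.\ $x_i=x'_i$ gives a scalar matrix) plus the degree/symmetry of both sides makes it quick. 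Second, like the paper, you defer the cases where some point equals $\infty$; a sentence on homogenizing (replacing $x_i$ by $[x_i:1]$ and reading the formula projectively) would close that.
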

\begin{proof}
Let $A = \begin{pmatrix} a & b \\ c & d \end{pmatrix}$ be an element of $\mathrm{GL}(2,\mathbb{C})$ which sends $(0,\infty, 1)$ to the triple $(x_1, x_2,x_3)$.
For simplicity, first assume that any of $x_i$ is not equal to $\infty$.
Then we have 
\[
\frac{b}{d}=x_1, \quad  \frac{a}{c} = x_2, \quad \frac{a+b}{c+d} = x_3.
\]
Assume $d=1$, then we have $b=x_1$, $c=\frac{x_3-x_1}{x_2-x_3}$ and $a=c x_2$.
Therefore $A$ is equal to
\[
\begin{pmatrix} x_2(x_3-x_1) & x_1(x_2-x_3) \\ (x_3-x_1) & (x_2-x_3) \end{pmatrix}
\]
up to scalar multiplication.
This still holds even if one of $x_i$ is $\infty$ in an appropriate way.
Let $X$ be an element of $\GLC$ which sends $(x_1,x_2,x_3)$ to $(x'_1, x'_2,x'_3)$. 
Since $XA$ sends $(0, \infty, 1)$ to $(x'_1, x'_2,x'_3)$, $XA$ is equal to
\[
B = \begin{pmatrix} x'_2(x'_3-x'_1) & x'_1(x'_2-x'_3) \\ (x'_3-x'_1) & (x'_2-x'_3) \end{pmatrix}
\]
up to scalar multiplication.
Therefore $X =B A^{-1} \in \mathrm{PGL}(2,\mathbb{F})$ is uniquely determined. 
\end{proof}

In this paper, we define the \emph{cross ratio} for four distinct points $x_0,x_1,x_2,x_3$ of $\mathbb{F}P^1$ by 
\begin{equation}
\label{eq:cross_raio}
[x_0:x_1:x_2:x_3] = \frac{x_3-x_0}{x_3-x_1}\frac{x_2-x_1}{x_2-x_0} \in ( \mathbb{F} \setminus \{0,1\} ).
\end{equation}
The cross ratio is invariant under the action of $\mathrm{PGL}(2,\mathbb{F})$, i.e. $[x_0:x_1:x_2:x_3] = [A x_0:A x_1:A x_2:A x_3]$ holds 
for any $A \in \mathrm{PGL}(2,\mathbb{F})$.

In the later sections, most of the arguments can be applied for $\mathrm{PGL}(2,\mathbb{F})$, 
but we will work in the case of $\PGLC$ for simplicity. 

\subsection{Character varieties}
\label{subsec:char_var}
Let $M$ be a manifold.
In this paper, we denote the set of all representations of $\pi_1(M)$ into $\SLC$ by $R_{SL}(M)$.
Since $\SLC$ is an affine algebraic group, $R_{SL}(M)$ is an affine algebraic set.
A representation $\rho$ is called \emph{reducible} if $\rho(\pi_1(M))$ fixes a point of $\mathbb{C}P^1$, otherwise it is called \emph{irreducible}.
The group $\SLC$ acts on $R_{SL}(M)$ by conjugation.
Since the action is algebraic, we can define the algebraic quotient $X_{SL}(M)$ of $R_{SL}(M)$.
This is called the \emph{character variety} because it can be regarded as the set of characters (see \cite{culler-shalen} for details).
If we restrict to the irreducible representations, $X_{SL}(M)$ is nothing but the usual quotient by the action of $\SLC$ \cite{culler-shalen}. 

Similarly we define the $\PSLC$-character variety (see \cite{heusener-porti}, for details).
We denote the set of all representations of $\pi_1(M)$ into $\PSLC$ by $R_{PSL}(M)$.
As before, a representation $\rho$ is called reducible if $\rho(\pi_1(M))$ fixes a point of $\mathbb{C}P^1$, otherwise it is called irreducible.
Since $\PSLC$ also acts on $R_{PSL}(M)$, we can define the algebraic quotient $X_{PSL}(M)$ of $R_{PSL}(M)$.
We can regard $X_{PSL}(M)$ as the set of the squares of the characters \cite{heusener-porti}.
As in the case of $\SLC$-character varieties, 
$X_{PSL}(M)$ is the usual quotient by the action of $\PSLC$ if we restrict to the irreducible representations 
(see \cite{heusener-porti}, \cite{porti}). 

The natural map $R_{SL}(M) \to R_{PSL}(M)$ is not surjective in general since there may exist a $\PSLC$-representation which does not lift to a $\SLC$-representation.
A $\PSLC$-representation lifts to a $\SLC$-representation if and only if the second Stiefel-Whitney class $w_2(\rho)$, 
which is defined in $H^{2}(M; \mathbb{Z}/2\mathbb{Z})$, vanishes.
Therefore if $S$ is a surface with boundary, any $\PSLC$-representation can be lifted to a $\SLC$-representation.
For a closed oriented surface $S$ of genus $g$, the evaluation of $w_2(\rho)$ at the fundamental class is calculated as follows.
$\pi_1(S)$ has the following presentation:
\[
\langle \alpha_1, \dots, \alpha_g, \beta_1, \dots, \beta_g \mid [\alpha_1,\beta_1] \dots [\alpha_g, \beta_g] = 1 \rangle.
\]
For a $\PSLC$-representation $\rho$, let $A_i = \rho(\alpha_i)$ and $B_i = \rho(\beta_i)$.
Take any lifts $\widetilde{A}_i$ and $\widetilde{B}_i$ in $\SLC$.
Then $w_2(\rho)$ evaluated at the fundamental class $[S]$ is equal to the sign of 
\[
[\widetilde{A}_1,\widetilde{B}_1] \dots [\widetilde{A}_g,\widetilde{B}_g] \in \{ \pm I \}.
\]
If a $\PSLC$-representation lifts to a $\SLC$-representation, then any other lift is obtained by the action of $H^1(M;\mathbb{Z}/2\mathbb{Z})$.
Since $H^1(M;\mathbb{Z}/2\mathbb{Z}) \cong \mathrm{Hom}(\pi_1(M), \mathbb{Z}/2\mathbb{Z})$, we can regard an element of $H^1(M;\mathbb{Z}/2\mathbb{Z})$
as a function $\epsilon : \pi_1(M) \to \{ \pm 1 \}$.
Then $\epsilon$ acts on $\rho$ by $(\epsilon \cdot \rho) (\gamma)  = \epsilon(\gamma)\rho(\gamma)$.
$H^1(M;\mathbb{Z}/2\mathbb{Z})$ freely acts on $R_{SL}(M)$.
It also acts on $X_{SL}(M)$, but not freely in general (\cite{boyer-zhang}, \cite{morgan-shalen}).

For a closed surface $S$ of genus $g > 1$,
Goldman showed in \cite{goldman} that $R_{\PSLC}(S)$ has exactly two components, one of which is the set of liftable representations 
and the other of which is the set of non-liftable representations.

\section{Representations of the fundamental group of a pair of pants}
\label{sec:rep_of_pants}
%
%


\subsection{}
\label{subsec:_matrices_of_pants_rep}
Let $P$ be a three-holed sphere, which is often called a \emph{pair of pants}.
Fix a base point $*$ on $P$ and define $\gamma_i \in \pi_1(P, *)$ as indicated in Figure \ref{fig:generators_for_pants},
which satisfy $\gamma_1 \gamma_2 \gamma_3 = 1$.
We say that $\gamma_i$ goes around the boundary in the counterclockwise direction.
\begin{proposition}
\label{prop:pants_rep}
Let $\rho: \pi_1(P) \to \SLC$ be an irreducible representation such that $\rho(\gamma_i)$ has two fixed points $(x_i,y_i)$. 
Let $e_i$ be the eigenvalue of $\rho(\gamma_i)$ corresponding to $x_i$.
Then $\rho$ is described only in terms of $e_i$ and $x_i$: 
\begin{equation}
\label{eq:pants_representation}
\begin{split}
& \rho(\gamma_i) = \frac{1}{e_{i}e_{i+1}(x_{i+1}-x_{i})(x_{i+2}-x_{i})}
\begin{pmatrix}
a_{11} & a_{12} \\ 
a_{21} & a_{22}
\end{pmatrix}, \\
a_{11} &= e_{i}^2 e_{i+1} x_{i} (x_{i}-x_{i+2}) + e_{i+1} x_{i+2} (x_{i+1}-x_{i}) + e_{i} e_{i+2} x_{i} (x_{i+2}-x_{i+1}), \\
a_{12} &= x_{i} ( e_{i}^2 e_{i+1} x_{i+1} (x_{i+2}-x_{i}) + e_{i+1} x_{i+2} (x_{i}-x_{i+1}) + e_{i} e_{i+2} x_{i} (x_{i+1}-x_{i+2}) ), \\
a_{21} &= e_{i}^2 e_{i+1} (x_{i}-x_{i+2}) + e_{i+1} (x_{i+1}-x_{i}) + e_{i} e_{i+2} (x_{i+2}-x_{i+1}), \\ 
a_{22} &= e_{i}^2 e_{i+1} x_{i+1} (x_{i+2}-x_{i}) + e_{i+1} x_{i} (x_{i}-x_{i+1}) + e_{i} e_{i+2} x_{i} (x_{i+1}-x_{i+2}). \\
\end{split}
\end{equation}
The other fixed point $y_i$ of $\rho(\gamma_i)$ (the fixed point corresponding to ${e_i}^{-1}$) is given by
\begin{equation}
\label{eq:other_fixed_point}
y_{i}= \frac{e_{i}^2 e_{i+1} x_{i+1} (x_{i} - x_{i+2}) + e_{i+1} x_{i+2} (x_{i+1} - x_{i}) + e_{i} e_{i+2} x_{i} (x_{i+2}- x_{i+1})}
{e_{i}^2 e_{i+1} (x_{i} - x_{i+2}) + e_{i+1} (x_{i+1} - x_{i}) + e_{i} e_{i+2} (x_{i+2}- x_{i+1})}.
\end{equation}
\end{proposition}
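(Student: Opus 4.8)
The plan is to reduce everything to computing the three ``other'' fixed points $y_1,y_2,y_3$, and to extract them from the single relation $\gamma_1\gamma_2\gamma_3=1$. Write $A_i=\rho(\gamma_i)$. Since $A_i$ has fixed points $x_i,y_i$ and eigenvalue $e_i$ at $x_i$, the characterization of $M(\,\cdot\,)$ preceding (\ref{eq:two_fixed_points_case}) gives $A_i=M(e_i;x_i,y_i)$. Hence $\rho$ is completely determined by the six numbers $(e_i,x_i,y_i)_{i=1,2,3}$, and the whole content of the proposition is to express each $y_i$ — and then, by substitution into $M(e_i;x_i,y_i)$, each matrix $A_i$ — as a function of the data $(e_j,x_j)_j$. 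First I would record that irreducibility forces $x_1,x_2,x_3$ to be pairwise distinct: if, say, $x_1=x_2=p$, then $A_1$ and $A_2$ fix $p$, hence so do $A_1A_2$ and $A_3=(A_1A_2)^{-1}$, so $\rho(\pi_1(P))$ fixes $p$ and $\rho$ is reducible. This distinctness is exactly what keeps the denominators $(x_{i+1}-x_i)(x_{i+2}-x_i)$ in (\ref{eq:pants_representation}) nonzero.

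Next I would turn the matrix relation into scalar equations. Rewriting $A_1A_2A_3=I$ cyclically gives, for each $i$ (indices mod $3$), $A_iA_{i+1}=A_{i+2}^{-1}$; since the right-hand side fixes $x_{i+2}$, this yields $A_{i+1}(x_{i+2})=A_i^{-1}(x_{i+2})$. Because $M(e;x,y)$ acts on $\mathbb{C}P^1$ as a M\"obius transformation that is linear-fractional in $y$, each of these three equations is a linear-fractional relation coupling exactly two of the unknowns, namely $(y_i,y_{i+1})$. Thus I obtain a symmetric $3$-cycle of equations in $y_1,y_2,y_3$, which are consequences of the given relation and which, as explained below, determine the $y_i$.

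I would then solve this cyclic system by elimination: solve the $i=1$ equation for $y_2$ as a M\"obius function of $y_1$, the $i=2$ equation for $y_3$ in terms of $y_2$ (hence of $y_1$), and substitute into the $i=3$ equation to obtain a single equation for $y_1$, whose relevant root is (\ref{eq:other_fixed_point}); the other $y_i$ follow from the cyclic symmetry $i\mapsto i+1$. To keep the algebra manageable I would first normalize using Lemma \ref{lem:thrice_transitive}: conjugate by the unique $g\in\PGLC$ carrying $(x_1,x_2,x_3)$ to a convenient standard triple. Since $gM(e;x,y)g^{-1}=M(e;g x,g y)$ leaves the eigenvalues fixed and only moves the fixed points, one may solve the much simpler normalized system and then transport the answer back by applying $g$, recovering the general formula. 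As an internal consistency check, the denominator of (\ref{eq:other_fixed_point}) is exactly the $(2,1)$-entry $a_{21}$ of (\ref{eq:pants_representation}), as it must be for $y_i$ to be the second fixed point.

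I expect the main obstacle to be purely computational: the three $y_i$ are tied together by one matrix relation with no evident decoupling, so the derivation is a genuine simultaneous elimination (equivalently, one may instead \emph{define} the $y_i$ by (\ref{eq:other_fixed_point}), form $A_i=M(e_i;x_i,y_i)$, and verify directly that $A_1A_2A_3=I$). Two minor points must be dispatched along the way: that the elimination selects a unique valid root — any extraneous root corresponds to a degenerate configuration with $y_i\in\{x_i,x_{i+1},x_{i+2}\}$ excluded by the two-distinct-fixed-point and irreducibility hypotheses — and that the solution is finite, i.e. the denominator in (\ref{eq:other_fixed_point}) does not vanish, which again follows from irreducibility. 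The conjugation-covariance of $M(e;x,y)$ under Lemma \ref{lem:thrice_transitive} is the device that reduces the bookkeeping from the general case to a single normalized computation.
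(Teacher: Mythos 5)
Your proposal is correct in outline and follows the same basic strategy as the paper's proof — normalize $(x_1,x_2,x_3)$ to a standard triple using Lemma \ref{lem:thrice_transitive}, exploit the relation $\gamma_1\gamma_2\gamma_3=1$ to pin down the second fixed points $y_i$, and conjugate back — but the mechanism you use to extract the $y_i$ from the relation is genuinely different, and weaker. The paper writes the three normalized matrices $A^{-1}\rho(\gamma_i)A$ explicitly via (\ref{eq:two_fixed_points_case}); each contains exactly one unknown $y'_i$ appearing linearly after clearing denominators, so the single matrix identity $\rho(\gamma_1)\rho(\gamma_2)=\rho(\gamma_3)^{-1}$, read entry by entry, yields $y'_1,y'_2,y'_3$ directly and with no spurious solutions. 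You instead keep only the scalar consequences $A_iA_{i+1}(x_{i+2})=x_{i+2}$, which are necessary but not sufficient: a cyclic elimination of three bilinear relations reduces to a M\"obius fixed-point equation for $y_1$, i.e.\ a quadratic with two roots, and your claim that the extraneous root is always a degenerate configuration with $y_i\in\{x_i,x_{i+1},x_{i+2}\}$ is asserted rather than argued — it is not obvious that this is the actual failure mode, since the discarded root could a priori give a nondegenerate triple satisfying the fixed-point conditions but not the full matrix relation. This soft spot is avoidable: either use the full matrix identity as the paper does, or take your own fallback route (define the $y_i$ by (\ref{eq:other_fixed_point}), verify $A_1A_2A_3=I$ directly, and then get uniqueness by noting that any two representations with the same data $(e_i,x_i)$ are conjugate by an element fixing the three distinct points $x_1,x_2,x_3$, hence by $\pm I$). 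Your preliminary observation that irreducibility forces the $x_i$ to be pairwise distinct is correct and worth making explicit; the paper leaves it implicit here and only records it in Proposition \ref{prop:domain}.
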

\begin{proof}
Let $A$ be the matrix
\[
\begin{pmatrix} x_2(x_3-x_1) & x_1(x_2-x_3) \\ (x_3-x_1) & (x_2-x_3) \end{pmatrix},
\] 
which sends $(0,\infty,1)$ to $(x_1,x_2,x_3)$.
Then $A^{-1} \rho(\gamma_i) A$ has the fixed points $(0,y'_1)$, $(\infty,y'_2)$, $(1,y'_3)$ for $i=1,2,3$ respectively.
Therefore $A^{-1} \rho(\gamma_i) A$ are uniquely determined by (\ref{eq:two_fixed_points_case}):
\[
\begin{split}
A^{-1} \rho(\gamma_1) A =&\begin{pmatrix} e_1^{-1} & 0 \\ \frac{e_1^{-1}-e_1}{y'_1} & e_1 \end{pmatrix}, \quad
A^{-1} \rho(\gamma_2) A =\begin{pmatrix} e_2 & (e_2^{-1}-e_2)y'_2 \\ 0 & e_2^{-1} \end{pmatrix}, \\
&A^{-1} \rho(\gamma_3) A =\frac{1}{y'_3-1}\begin{pmatrix} e_3^{-1}y'_3-e_3 & (e_3-e_3^{-1})y'_3 \\ e_3^{-1}-e_3 & e_3 y'_3-e_3^{-1} \end{pmatrix}.
\end{split}
\]
From the identity $\rho(\gamma_1)\rho(\gamma_2) = \rho(\gamma_3)^{-1}$, we have
\[
y'_1 =\frac{e_1-e_1^{-1}}{e_2^{-1} {e_3} - e_1^{-1}}, \quad y'_2=\frac{e_2 - e_1e_3^{-1}}{e_2-e_2^{-1}}, \quad y'_3= \frac{e_2-e_1 e_3^{-1}}{e_2 -e_1 e_3}.
\]
Computing $\rho(\gamma_i) = A(A^{-1} \rho(\gamma_i) A)A^{-1}$ and $y_i = A \cdot y'_i$, we obtain the result.
\end{proof}

\begin{figure}
\input{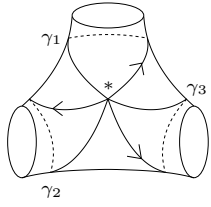}
\caption{A pair of pants.}
\label{fig:generators_for_pants}
\end{figure}

It is known that the conjugacy class of an irreducible $\SLC$-representation of the fundamental group of a pair of pants is 
uniquely determined by the triple $(\tr(\rho(\gamma_1)), \tr(\rho(\gamma_2)), \tr(\rho(\gamma_3)))$. 
Therefore $\rho$ is uniquely determined by $e_i$ up to conjugation.
Proposition \ref{prop:pants_rep} means that additional information about $x_i$ determines the representation in the conjugacy class.

Conversely the representation given by (\ref{eq:pants_representation}) is irreducible for generic values.
\begin{proposition}
\label{prop:domain}
Let $x_1$, $x_2$ and $x_3$ be distinct points on $\mathbb{C}P^1$.
For any triple $(e_1,e_2,e_3)$ $(e_i \neq 0, \pm 1)$, the representation given by (\ref{eq:pants_representation}) is irreducible
unless $e_1 = e_2 e_3$, $e_2 = e_3 e_1$, $e_3 = e_1 e_2$ or $e_1 e_2 e_3 = 1$.
If $e_1 = e_2 e_3$ (resp. $e_2 = e_3 e_1$, $e_3 = e_1 e_2$, $e_1 e_2 e_3 = 1$), 
then $x_1 = y_2 = y_3$ (resp. $y_1 = x_2 = y_3$, $y_1 = y_2 = x_3$, $y_1 = y_2 = y_3$) and therefore $\rho$ is reducible.
\end{proposition}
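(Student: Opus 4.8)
The plan is to work with the conjugated normal form already produced in the proof of Proposition \ref{prop:pants_rep}, since reducibility is a conjugation-invariant property. After replacing $\rho$ by $A^{-1}\rho A$, where $A$ sends $(0,\infty,1)$ to $(x_1,x_2,x_3)$, the matrices $\rho(\gamma_1),\rho(\gamma_2),\rho(\gamma_3)$ have fixed points $\{0,y'_1\}$, $\{\infty,y'_2\}$, $\{1,y'_3\}$ respectively, with $y'_1,y'_2,y'_3$ the explicit rational functions of $e_1,e_2,e_3$ recorded there. Because $e_i\neq\pm 1$, each $\rho(\gamma_i)$ genuinely has two distinct fixed points, and because $\gamma_3=(\gamma_1\gamma_2)^{-1}$ the image $\rho(\pi_1(P))$ is generated by $\rho(\gamma_1)$ and $\rho(\gamma_2)$. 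Hence $\rho$ is reducible if and only if these two elements share a fixed point, i.e. $\{0,y'_1\}\cap\{\infty,y'_2\}\neq\emptyset$. Since $0\neq\infty$, this leaves exactly three possibilities: $y'_2=0$, $y'_1=\infty$, or $y'_1=y'_2$ with both values finite.

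First I would dispose of the two boundary cases by inspecting the formulas for $y'_1,y'_2,y'_3$. The condition $y'_2=0$ is the vanishing of its numerator $e_2-e_1e_3^{-1}$, i.e. $e_1=e_2e_3$; the same numerator appears in $y'_3$, so simultaneously $y'_3=0$, and transporting back by $A$ the common fixed point $0$ becomes $x_1$, giving $x_1=y_2=y_3$. Symmetrically, $y'_1=\infty$ is the vanishing of its denominator $e_2^{-1}e_3-e_1^{-1}$, i.e. $e_2=e_3e_1$; under the same condition the denominator $e_2-e_1e_3$ of $y'_3$ vanishes, so $y'_3=\infty$, and the common fixed point $\infty=x_2$ yields $y_1=x_2=y_3$.

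The remaining case $y'_1=y'_2$ is where the computation lives, but it collapses pleasantly. Clearing denominators in $y'_1=y'_2$ and simplifying, the difference of the two sides reduces to
\[
e_1e_2+(e_1e_2)^{-1}-\bigl(e_3+e_3^{-1}\bigr),
\]
so the equation is $e_1e_2+(e_1e_2)^{-1}=e_3+e_3^{-1}$, which after multiplying by the nonzero quantity $e_1e_2e_3$ factors as $(e_1e_2-e_3)(e_1e_2e_3-1)=0$. These are precisely the two remaining relations. If $e_3=e_1e_2$ one checks $y'_1=y'_2=1$, and the common fixed point $1=x_3$ gives $y_1=y_2=x_3$; if $e_1e_2e_3=1$ one finds $y'_1=y'_2=y'_3$ equal to a single point which, again using $e_i\neq\pm 1$, is distinct from $0,\infty,1$, so that $y_1=y_2=y_3$.

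Assembling the three cases shows that $\rho$ is reducible exactly when one of the four relations holds, which is the irreducibility assertion in contrapositive form, and in each case exhibits the stated triple of coinciding fixed points. The only real work is the third case, and the point to verify is that no spurious component is introduced; the simplification above makes this transparent. As an independent organizing principle and a cross-check, one may instead compute $\tr\rho([\gamma_1,\gamma_2])$ from $\tr[A,B]=(\tr A)^2+(\tr B)^2+(\tr AB)^2-(\tr A)(\tr B)(\tr AB)-2$ together with $\tr\rho(\gamma_i)=e_i+e_i^{-1}$ and $\tr\rho(\gamma_1\gamma_2)=e_3+e_3^{-1}$; reducibility is equivalent to $\tr\rho([\gamma_1,\gamma_2])=2$, and the resulting quantity factors, up to a nonzero monomial in the $e_i$, as $(e_1e_2e_3-1)(e_1e_2-e_3)(e_2e_3-e_1)(e_3e_1-e_2)$, displaying all four conditions symmetrically at once.
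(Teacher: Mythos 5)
Your proposal is correct and follows essentially the same route as the paper: reducibility is equivalent to the generators sharing a fixed point, the possible coincidences are enumerated (your three cases for $\{0,y'_1\}\cap\{\infty,y'_2\}$ versus the paper's four cases $x_1=y_2=y_3$, etc.), and each is matched to an eigenvalue relation by direct computation with the explicit fixed-point formulas. The only differences are organizational --- you work in the normalized coordinates $(0,\infty,1)$ and your third case captures both $e_3=e_1e_2$ and $e_1e_2e_3=1$ at once via the factorization $(e_1e_2-e_3)(e_1e_2e_3-1)=0$, whereas the paper computes with (\ref{eq:other_fixed_point}) in general position and invokes symmetry.
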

\begin{proof}
Since $\gamma_1 \gamma_2 \gamma_3 = 1$, if two of $\{\rho(\gamma_i)\}_{i=1,2,3}$ have a common fixed point, 
then the other matrix also fixes the point.
Therefore $\rho$ is reducible if and only if one of the following identities holds:
\[
x_1=y_2=y_3, \quad y_1=x_2=y_3, \quad y_1=y_2=x_3, \quad y_1=y_2=y_3.
\]
($x_i = x_j$ for $i \neq j$ does not occur by assumption.)
We only consider the first case since others easily follow from symmetry.
Since $x_1 = y_2$, we have 
\[
x_1 = \frac{{e_2}^2 e_3 x_3 (x_2 - x_1) + e_3 x_1 (x_3 - x_2) + e_2 e_1 x_2 (x_1- x_3)}
{{e_2}^2 e_3 (x_2 - x_1) + e_3 (x_3 - x_2) + e_2 e_1 (x_1 - x_3)}
\]
by (\ref{eq:other_fixed_point}).
Since all $x_i$ are distinct, this is equivalent to $e_1 = e_2 e_3$.
A similar calculation shows that $y_2 = y_3$ is equivalent to $(1-e_1e_2e_3)(e_1-e_2e_3)=0$.
Therefore $x_1=y_2=y_3$ if and only if $e_1=e_2 e_3$.
\end{proof}

Since the conjugacy class of an irreducible $\SLC$-representation is uniquely determined by the triple 
\[
(\tr(\rho(\gamma_1)), \tr(\rho(\gamma_2)), \tr(\rho(\gamma_3))) = (e_1+{e_1}^{-1}, e_2+{e_2}^{-1}, e_3+{e_3}^{-1}),
\]
we conclude that 
\begin{corollary}
\label{cor:conj_classes_of_pants_group_rep_into_SL}
The set of conjugacy classes of irreducible $\SLC$-representations of the fundamental group of a pair of pants can be identified with
\[
\begin{split}
\{ (e_1,e_2,e_3) \in (\mathbb{C} \setminus \{0, \pm 1\})^3 \mid e_1 \neq e_2 e_3, \quad e_2 \neq e_3 e_1, \\  
e_3 \neq e_1 e_2, \quad e_1 e_2 e_3 \neq 1 \} / (\mathbb{Z}/2\mathbb{Z})^3
\end{split}
\]
where $(\epsilon_1, \epsilon_2, \epsilon_3) \in (\mathbb{Z} /2 \mathbb{Z})^3$ acts as 
\begin{equation}
\label{eq:action_of_Z/2Z^3}
(e_1, e_2, e_3) \mapsto ({e_1}^{\epsilon_1}, {e_2}^{\epsilon_2}, {e_3}^{\epsilon_3}).
\end{equation}
\end{corollary}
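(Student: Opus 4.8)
The plan is to exhibit an explicit bijection between the quotient set on the right and the set of conjugacy classes on the left by assembling three facts already in hand: the parametrization of Proposition \ref{prop:pants_rep} (which recovers $\rho$ from the data $(e_i,x_i)$), the irreducibility criterion of Proposition \ref{prop:domain} (which identifies exactly the locus on which the construction remains irreducible), and the classical statement recalled just before the corollary, that an irreducible $\SLC$-representation of the pants group is determined up to conjugacy by the trace triple $(\tr(\rho(\gamma_1)),\tr(\rho(\gamma_2)),\tr(\rho(\gamma_3)))$. Concretely, I would define a map $\Phi$ from the allowed set of triples $(e_1,e_2,e_3)$ to conjugacy classes by fixing once and for all three distinct base points, say $(x_1,x_2,x_3)=(0,\infty,1)$, forming the matrices (\ref{eq:pants_representation}), and taking the conjugacy class of the resulting representation; Proposition \ref{prop:domain} guarantees this class is irreducible precisely on the stated domain, and the traces are $\tr(\rho(\gamma_i))=e_i+e_i^{-1}$.

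First I would check that $\Phi$ does not depend on the choice of base points, since this is what surjectivity will need. Given another triple of distinct points $(x_1',x_2',x_3')$, Lemma \ref{lem:thrice_transitive} supplies a unique $g\in\PGLC$ with $g\cdot x_i=x_i'$; conjugation by $g$ sends each $\rho(\gamma_i)$ to a matrix with attracting fixed point $x_i'$ and unchanged eigenvalue $e_i$, and preserves the relation $\gamma_1\gamma_2\gamma_3=1$, so the two representations are conjugate. Next I would show $\Phi$ descends to the $(\mathbb{Z}/2\mathbb{Z})^3$-quotient: replacing $e_i$ by $e_i^{-1}$ leaves $\tr(\rho(\gamma_i))=e_i+e_i^{-1}$ unchanged for every $i$, so by the trace-determination fact the two irreducible representations are conjugate. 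This yields a well-defined map $\bar\Phi$ on the quotient.

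For injectivity of $\bar\Phi$, suppose two triples yield conjugate representations; then their trace triples agree, i.e. $e_i+e_i^{-1}=e_i'+(e_i')^{-1}$ for each $i$. Since $t\mapsto t+t^{-1}$ attains the value $e_i+e_i^{-1}$ exactly at the two roots $e_i,e_i^{-1}$ of the monic quadratic $t^2-(e_i+e_i^{-1})t+1$, we get $e_i'=e_i^{\pm1}$ for each $i$, so the triples lie in one $(\mathbb{Z}/2\mathbb{Z})^3$-orbit. For surjectivity, given an irreducible representation with each $\rho(\gamma_i)$ having two distinct fixed points, I would choose one eigenvalue $e_i$ of $\rho(\gamma_i)$ together with its associated fixed point $x_i$; Proposition \ref{prop:pants_rep} shows $\rho$ is exactly the representation (\ref{eq:pants_representation}) built from this data, and $x_i$-independence gives $[\rho]=\Phi(e_1,e_2,e_3)$. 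Here $e_i\neq0$ as determinants are $1$, $e_i\neq\pm1$ because the two fixed points are distinct (forcing $e_i\neq e_i^{-1}$), and the four inequalities hold by Proposition \ref{prop:domain}, since any equality would force reducibility. Hence $\bar\Phi$ is a bijection.

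The routine matrix computations are already absorbed into Propositions \ref{prop:pants_rep} and \ref{prop:domain}, so the only conceptual points needing care are the two fiber identifications: that changing the base points $x_i$ merely conjugates (cleanly handled by Lemma \ref{lem:thrice_transitive}), and that the fiber of the trace map is exactly a $(\mathbb{Z}/2\mathbb{Z})^3$-orbit (the elementary observation about $t+t^{-1}$). I expect the main subtlety to be bookkeeping the standing hypothesis that each $\rho(\gamma_i)$ has two distinct fixed points, equivalently $\tr(\rho(\gamma_i))\neq\pm2$: this is precisely what the domain restriction $e_i\neq\pm1$ encodes and what makes the surjectivity step apply, so I would emphasize that the identification is with irreducible representations whose boundary matrices are all non-parabolic, the parabolic-boundary case being excluded by these coordinates.
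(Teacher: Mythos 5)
Your proposal is correct and follows essentially the same route as the paper, which derives the corollary directly from Proposition \ref{prop:pants_rep}, Proposition \ref{prop:domain}, and the classical fact that the trace triple $(e_i+e_i^{-1})_{i=1,2,3}$ determines an irreducible pants representation up to conjugacy; your write-up merely makes the bijection and its well-definedness explicit. Your closing remark that the identification implicitly excludes irreducible representations with a parabolic boundary matrix (trace $\pm 2$) is a fair reading of a point the paper leaves tacit, but it does not change the argument.
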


\subsection{$\PSLC$-character variety of a pair of pants}
\label{subsec:pants_rep_of_PSL2C}
An $\SLC$-representation given by (\ref{eq:pants_representation}) reduces to a $\PSLC$-representation.
Conversely, any $\PSLC$-representation of the fundamental group of a pair of pants can be lifted to a $\SLC$-representation
since we can choose signs of $\rho(\gamma_1)$ and $\rho(\gamma_2)$ arbitrary (then the sign of $\rho(\gamma_3)$ is automatically determined).
The representation given by (\ref{eq:pants_representation}) can be regarded as a lift of the $\PSLC$-representation to a $\SLC$-representation.
As mentioned in \S \ref{subsec:char_var}, any other lift is obtained by the action of $H^1(P; \mathbb{Z}/2\mathbb{Z})$.
An element of $H^1(P; \mathbb{Z}/2\mathbb{Z}) = \mathrm{Hom}(\pi_1(P); \mathbb{Z}/2\mathbb{Z})$ can be regarded as 
a triple $(\epsilon_1, \epsilon_2, \epsilon_3) \in \{\pm 1\}^3$ satisfying $\epsilon_1 \epsilon_2 \epsilon_3 = 1$, 
and the action is given by
\begin{equation}
\label{eq:action_of_H^1(P,Z/2Z)}
(e_1,e_2,e_3) \mapsto (\epsilon_1 e_1, \epsilon_2 e_2, \epsilon_3 e_3).
\end{equation}
(Observe that $y_i$ in (\ref{eq:other_fixed_point}) is invariant under the action of $H^1(P; \mathbb{Z}/2\mathbb{Z})$, since the fixed points 
do not depend on the choice of a lift.)
Therefore we have: 
\begin{corollary}
\label{cor:conj_classes_of_pants_group_rep_into_PSL}
The set of conjugacy classes of irreducible $\PSLC$-representations of the fundamental group of a pair of pants can be identified with 
\[
\begin{split}
( \{ (e_1,e_2,e_3) \in (\mathbb{C} \setminus \{0, \pm 1\})^3 \mid e_1 \neq e_2 e_3, \quad e_2 \neq e_3 e_1, \\  
e_3 \neq e_1 e_2, \quad e_1 e_2 e_3 \neq 1 \} / (\mathbb{Z}/2\mathbb{Z})^3 / H^1(P; \mathbb{Z}/2\mathbb{Z}),
\end{split}
\]
where the action of $(\mathbb{Z}/2\mathbb{Z})^3$ is the one given by (\ref{eq:action_of_Z/2Z^3}) and the action of $H^1(P; \mathbb{Z}/2\mathbb{Z})$
is given by (\ref{eq:action_of_H^1(P,Z/2Z)}).
\end{corollary}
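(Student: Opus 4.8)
The plan is to assemble Corollary~\ref{cor:conj_classes_of_pants_group_rep_into_PSL} from the two preceding results together with the lifting discussion just given, rather than re-deriving anything from scratch. The point is that we already have, from Corollary~\ref{cor:conj_classes_of_pants_group_rep_into_SL}, an explicit identification of the set of conjugacy classes of irreducible $\SLC$-representations with the quotient of the admissible eigenvalue domain
\[
D = \{ (e_1,e_2,e_3) \in (\mathbb{C} \setminus \{0, \pm 1\})^3 \mid e_1 \neq e_2 e_3, \ e_2 \neq e_3 e_1, \ e_3 \neq e_1 e_2, \ e_1 e_2 e_3 \neq 1 \}
\]
by the sign action $(\mathbb{Z}/2\mathbb{Z})^3$ of (\ref{eq:action_of_Z/2Z^3}). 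So the task reduces to understanding the passage from the $\SLC$-character variety to the $\PSLC$-character variety, and showing this passage is exactly recorded by the further quotient by $H^1(P;\mathbb{Z}/2\mathbb{Z})$ via (\ref{eq:action_of_H^1(P,Z/2Z)}).

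First I would establish the surjectivity of the natural map from irreducible $\SLC$-classes to irreducible $\PSLC$-classes. This is the content of the opening sentences of \S\ref{subsec:pants_rep_of_PSL2C}: every $\PSLC$-representation of $\pi_1(P)$ lifts to $\SLC$ because $\pi_1(P)$ is free on $\gamma_1,\gamma_2$ (with $\gamma_3 = (\gamma_1\gamma_2)^{-1}$), so the signs of $\rho(\gamma_1)$ and $\rho(\gamma_2)$ may be chosen freely and the sign of $\rho(\gamma_3)$ is then forced. Irreducibility is preserved under the projection $\SLC \to \PSLC$ since fixing a point of $\mathbb{C}P^1$ is a condition insensitive to the central sign. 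Next I would identify the fibers of this map: two irreducible $\SLC$-representations descend to conjugate $\PSLC$-representations if and only if they are related by conjugation together with a change of lift, and the set of lifts of a fixed $\PSLC$-representation is a torsor under $H^1(P;\mathbb{Z}/2\mathbb{Z})$, exactly as recalled in \S\ref{subsec:char_var}. Identifying $H^1(P;\mathbb{Z}/2\mathbb{Z}) = \Hom(\pi_1(P),\mathbb{Z}/2\mathbb{Z})$ with triples $(\epsilon_1,\epsilon_2,\epsilon_3) \in \{\pm 1\}^3$ satisfying $\epsilon_1\epsilon_2\epsilon_3 = 1$, I would then verify that the change of lift $\rho(\gamma_i) \mapsto \epsilon_i \rho(\gamma_i)$ is recorded on the eigenvalue parameters precisely as $e_i \mapsto \epsilon_i e_i$, which is (\ref{eq:action_of_H^1(P,Z/2Z)}).

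The one genuinely substantive point, and the step I expect to be the main obstacle, is checking that the two actions are compatible and that quotienting by them in succession is well-defined, i.e. that the sign action (\ref{eq:action_of_Z/2Z^3}) of $(\mathbb{Z}/2\mathbb{Z})^3$ (inversion $e_i \mapsto e_i^{\pm 1}$, coming from the two choices of eigenvalue per boundary curve) and the lift action (\ref{eq:action_of_H^1(P,Z/2Z)}) of $H^1(P;\mathbb{Z}/2\mathbb{Z})$ (multiplication $e_i \mapsto \epsilon_i e_i$) together generate a genuine action on $D$ under which the iterated quotient $D / (\mathbb{Z}/2\mathbb{Z})^3 / H^1(P;\mathbb{Z}/2\mathbb{Z})$ makes sense. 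Concretely I must confirm that the $H^1$-action descends to the $(\mathbb{Z}/2\mathbb{Z})^3$-quotient. This amounts to observing that both actions are by monomials in the $e_i$ — one group acts by the three independent inversions and the other by the sign triples with product $1$ — so they commute up to the relabeling already absorbed in the first quotient; in particular the composite group acting is a quotient of $(\mathbb{Z}/2\mathbb{Z})^3 \times (\mathbb{Z}/2\mathbb{Z})^2$ acting on $D$, and I would check that $D$ is invariant (the defining inequalities are symmetric under $e_i \mapsto e_i^{-1}$ and under $e_i \mapsto \epsilon_i e_i$ with $\prod \epsilon_i = 1$, since e.g. $e_1 \neq e_2 e_3$ transforms to an inequality of the same shape).

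Finally I would conclude by invoking the trace description: since the conjugacy class of an irreducible $\SLC$-representation is determined by $(\tr\rho(\gamma_1),\tr\rho(\gamma_2),\tr\rho(\gamma_3)) = (e_1 + e_1^{-1}, e_2 + e_2^{-1}, e_3 + e_3^{-1})$, and the $\PSLC$-character variety is the set of squares of characters (as recalled in \S\ref{subsec:char_var} following \cite{heusener-porti}), the $\PSLC$-conjugacy class is the $\SLC$-class modulo exactly the remaining lift ambiguity. Combining surjectivity, the fiber computation, and the well-definedness of the double quotient yields the stated bijection with $D / (\mathbb{Z}/2\mathbb{Z})^3 / H^1(P;\mathbb{Z}/2\mathbb{Z})$. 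The parenthetical remark that $y_i$ in (\ref{eq:other_fixed_point}) is invariant under the $H^1$-action is a useful consistency check — the fixed points are intrinsic to the $\PSLC$-representation and cannot depend on the choice of lift — and I would flag it as confirming that the $H^1$-action genuinely corresponds to changing lifts rather than changing the underlying representation.
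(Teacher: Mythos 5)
Your proposal is correct and follows essentially the same route as the paper: lift any $\PSLC$-representation to $\SLC$ using the freeness of $\pi_1(P)$ (choosing signs of $\rho(\gamma_1),\rho(\gamma_2)$ arbitrarily), observe that the set of lifts is a torsor under $H^1(P;\mathbb{Z}/2\mathbb{Z})$ acting on the eigenvalue parameters by the sign triples with $\epsilon_1\epsilon_2\epsilon_3=1$, and quotient Corollary \ref{cor:conj_classes_of_pants_group_rep_into_SL} by this action. Your extra checks (invariance of the domain, exact commutativity of the two $\mathbb{Z}/2\mathbb{Z}$-actions --- which in fact commute on the nose since $(\epsilon_i e_i)^{-1}=\epsilon_i e_i^{-1}$) only make explicit what the paper records in its closing remark that the two actions commute.
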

We remark that the actions of $(\mathbb{Z}/2\mathbb{Z})^3$ and $H^1(P; \mathbb{Z}/2\mathbb{Z})$ commute.

\section{Pants decomposition and dual graph}
\label{sec:pants_decomp}
%
%


In Section \ref{sec:rep_of_pants}, we parametrized representations of the fundamental group of a pair of pants by their eigenvalues.
For a general surface, we decompose it into pairs of pants and consider the restriction of a representation on each pair of pants.
Then we shall describe how two representations are glued along their boundary curves.
To describe gluing process, we need an additional information: a \emph{dual graph} to the pants decomposition.

In this section, we define a pants decomposition and a graph dual to the pants decomposition.
Taking a maximal tree of a dual graph, we obtain a presentation of the surface group.
In Section \ref{sec:matrix_generators}, we shall use this presentation to describe the matrix generators for our parametrization.

\subsection{Pants decomposition}
\label{subsec:pants_decomposition}
Let $S = S_{g,b}$ be a surface of genus $g$ with $b$ boundary components.
In the following, we assume that the Euler characteristic of $S$ is negative ($2-2g-b < 0$).
A \emph{pants decomposition} $C$ is a disjoint union of simple closed curve on $S$ 
such that $S \setminus N(C)$ is a collection of three holed spheres (pairs of pants), where $N(C)$ is a small open neighborhood of $C$.
Then the number of simple closed curves of $C$ is equal to $3g-3+b$.
We say that a component of $C$ an \emph{interior pants curve},
and a component of $\partial S$ a \emph{boundary pants curve}, or simply boundary curve.
We call a simple close curve which is either an interior pants curve or a boundary curve a \emph{pants curve}.
We will denote the interior pants curves of $C$ by $c_i$ $(i=1,\dots, 3g-3+b)$ 
and the boundary curves of $S$ by $b_i$ $(i=1,\dots, b)$.

We fix a pants decomposition $C = c_1 \cup \dots \cup c_{3g-3+b}$.
We will parametrize the representations $\rho : \pi_1(S) \to \SLC$ (resp. $\rho : \pi_1(S) \to \PSLC$) satisfying the following two conditions:
\begin{itemize}
\item[(C1)] 
For $\gamma_i \in \pi_1(S)$ whose free homotopy class represents $c_i$, $\rho(\gamma_i)$ has exactly two fixed points on $\mathbb{C}P^1$ for $i= 1, \dots , 3g-3+b$, 
and also for boundary curves $b_i$ $(i=1,\dots, b)$.  
\item[(C2)] 
For any pair of pants $P \subset S \setminus N(C)$, the restriction $\rho|_{\pi_1(P)}$ is irreducible.
\end{itemize}
We denote the subset of $R_{SL}(S)$ (resp. $R_{PSL}(S)$) satisfying (C1) and (C2) by $R_{SL}(S,C)$ (resp. $R_{PSL}(S,C)$).
We remark that these representations are generic in the character variety.
\begin{proposition}
$R_{SL}(S,C)$ (resp. $R_{PSL}(S,C)$) is an open algebraic subset of $R_{SL}(S)$ (resp. $R_{PSL}(S)$).
\end{proposition}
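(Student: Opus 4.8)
The plan is to exhibit $R_{SL}(S,C)$ as the complement in $R_{SL}(S)$ of a Zariski-closed subset, by rewriting conditions (C1) and (C2) as the non-vanishing of finitely many regular functions. The basic input I would use is that for any fixed $\gamma \in \pi_1(S)$ the assignment $\rho \mapsto \tr(\rho(\gamma))$ is a regular function on $R_{SL}(S)$: after choosing a finite generating set, $\rho(\gamma)$ is a fixed word in the images of the generators, so its entries, and hence its trace, are polynomials in the entries of $\rho$ on the generators. It therefore suffices to express (C1) and (C2) through such trace functions.

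For (C1), I would record that $A \in \SLC$ has exactly two fixed points on $\mathbb{C}P^1$ if and only if it has two distinct eigenvalues, equivalently $\tr(A)^2 \neq 4$; the locus $\tr(A)^2 = 4$ consists precisely of $\pm I$ (fixing all of $\mathbb{C}P^1$) together with the parabolics (fixing a single point). Letting $\gamma_c \in \pi_1(S)$ represent each pants curve $c$ (interior or boundary), condition (C1) is thus equivalent to
\[
\prod_{c} \bigl( \tr(\rho(\gamma_c))^2 - 4 \bigr) \neq 0,
\]
which is manifestly a Zariski-open condition.

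For (C2), each pair of pants $P \subset S \setminus N(C)$ has free fundamental group of rank two, so I would fix generators $\pi_1(P) = \langle \delta, \delta' \rangle$. A two-generator subgroup of $\SLC$ is reducible, i.e. has a common fixed point on $\mathbb{C}P^1$, if and only if $\tr(\rho([\delta,\delta'])) = 2$, by the standard commutator-trace criterion. Hence (C2) is equivalent to $\tr(\rho([\delta,\delta'])) \neq 2$ ranging over the finitely many pairs of pants, again Zariski-open. Intersecting the finitely many open conditions coming from (C1) and (C2) then shows that $R_{SL}(S,C)$ is an open algebraic subset of $R_{SL}(S)$.

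For the $\PSLC$ case the same argument applies once the relevant functions are seen to descend to $R_{PSL}(S)$, and this descent is the step I expect to require the most care. Although the trace is only defined up to sign on $\PSLC$, the quantity $\tr(A)^2$ is well-defined and regular: under the adjoint embedding $\PSLC \cong \mathrm{SO}(3,\mathbb{C})$ one has $\tr(A)^2 = \tr(\mathrm{Ad}\,A) + 1$, and the fixed-point set of $[A]$ on $\mathbb{C}P^1$ coincides with that of any lift, so (C1) reads $\tr(\rho(\gamma_c))^2 \neq 4$ as before. For (C2), a commutator lifts canonically from $\PSLC$ to $\SLC$, since the sign ambiguities of the lifts cancel in $[\,\cdot\,,\cdot\,]$; thus $\tr(\rho([\delta,\delta']))$ is a well-defined regular function on $R_{PSL}(S)$ and the reducibility criterion transfers verbatim. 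The hard part is precisely to verify that $\tr^2$ and the commutator trace are genuine regular functions on $R_{PSL}(S)$ and that the $\SLC$ reducibility criterion carries over to $\PSLC$ without change; granting this, $R_{PSL}(S,C)$ is likewise an open algebraic subset of $R_{PSL}(S)$.
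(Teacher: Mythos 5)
Your proposal is correct and follows essentially the same route as the paper: both express (C1) via $\tr(\rho(\gamma_c))^2\neq 4$ and (C2) via the vanishing locus of commutator traces on each pair of pants (the paper cites Culler--Shalen's criterion over the whole commutator subgroup, while you use the equivalent single-commutator test for the rank-two free group $\pi_1(P)$), and both handle the $\PSLC$ case by observing that $\tr^2$ and commutator traces descend to well-defined regular functions, as in Heusener--Porti. No gaps.
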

\begin{proof}
First, we show that the complement of $R_{SL}(S,C)$ is a closed algebraic subset.
In \cite[Corollary 1.2.2]{culler-shalen}, it is shown that a representation $\rho: \Gamma \to \SLC$ is reducible if and only if $\tr(\rho(c)) = 2$ for all $c \in [\Gamma,\Gamma]$.
Applying this criterion to $\pi_1(P)$ for each pair of pants $P \subset S \setminus N(C)$, the set of the representations which do not satisfy (C2) is a closed algebraic subset. 
Let $\gamma_i$ be an element of $\pi_1(S)$ whose conjugacy class is represented by $c_i$. 
Since $\rho(\gamma_i)$ has two fixed points if and only if $\tr(\rho(\gamma_i)) \neq \pm 2$, the set of the representations which do not satisfy (C1) 
is a closed algebraic subset. 
Therefore the complement of $R_{SL}(S,C)$ is closed in $R_{SL}(S)$.

In \cite{heusener-porti}, it is shown that $\rho : \Gamma \to \PSLC$ is reducible if and only if $\tr([\rho(\gamma), \rho(\eta)]) = 2$ for any $\gamma, \eta \in \Gamma$, 
where the trace is defined by lifting $\rho(\gamma)$ and $\rho(\eta)$ to $\SLC$.
Therefore we can similarly conclude that $R_{PSL}(S,C)$ is an open algebraic subset of $R_{PSL}(S)$.
\end{proof}

When $S$ has boundary, $\pi_1(S)$ is a free group of rank $2g+b-1$. 
Thus $R_{SL}(S) \cong \SLC^{2g+b-1}$, in particular, an irreducible algebraic variety.
So $R_{SL}(S,C)$ is an open algebraic subset of an irreducible variety, therefore open dense in $R_{SL}(S)$.

It is known that there exists a pants decomposition satisfying these two conditions for any non-elementary representation \cite{gallo-kapovich-marden}.

We denote the image of $R_{SL}(S,C)$ (resp. $R_{PSL}(S,C)$) to $X_{SL}(S)$ (resp. $X_{PSL}(S)$) by $X_{SL}(S,C)$ (resp. $X_{PSL}(S,C)$).
Since $R_{SL}(S,C)$ (resp. $R_{PSL}(S,C)$) consists of reducible representations, $X_{SL}(S,C)$ (resp. $X_{PSL}(S,C)$) is the quotient space of 
of $R_{SL}(S,C)$ (resp. $R_{PSL}(S,C)$) by conjugation.
In the next few sections, we shall give a parametrization of $X_{SL}(S,C)$ and $X_{PSL}(S,C)$.

\subsection{ Dual graph}
\label{subsec:dual_graph}
Let $C$ be a pants decomposition. 
Let $G$ be a graph with only trivalent or univalent vertices. 
We assume that all edges of $G$ are oriented.
We call an edge of $G$ whose endpoints are both trivalent vertices an \emph{interior edge} and 
an edge one of whose endpoints is univalent a \emph{boundary edge}.
An embedding $g : G \to S$ is \emph{dual} to the pants decomposition $C$ if the trivalent vertices are mapped to interior points of $S \setminus N(C)$ and  
the univalent vertices are mapped to $\partial S$ satisfying the following properties:
\begin{itemize}
\item For each component $P$ of $S \setminus N(C)$, there is exactly one trivalent vertex $v$ such that $g(v) \in \mathrm{Int}(P)$. 
\item For each component $b_i$ of $\partial S$, there is exactly one univalent vertex $v$ such that $g(v) \in b_i$. 
\item Every interior edge of $G$ intersects $C$ exactly once transversely.
\item Every boundary edge of $G$ does not intersect $C$.
\end{itemize}
\begin{figure}
\input{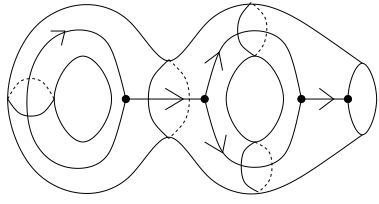}
\caption{A dual graph of a pants decomposition.}
\label{fig:dual_graph}
\end{figure}
An example of a dual embedding is given in Figure \ref{fig:dual_graph}.
From the definition, $G$ has $2g-2+b$ trivalent vertices, $b$ univalent vertices and $3g-3+2b$ edges.
We say that two dual graphs $(g_1,G)$ and $(g_2,G)$ are equivalent if there exists a homeomorphism $h$ isotopic to the identity making the following diagram commute: 
\[
\xymatrix@1@R=5pt{
  & S \ar[dd]^h \\
G \ar[ru]^{g_1} \ar[rd]_{g_2} & \\
  & S
}
\]
Here we mean an isotopy preserving $\partial S$ setwise. 
(We may use an isotopy preserving the boundary pointwise when we consider a subsurface of $S$ in later sections.) 
We simply call an equivalent class of a pair $(g,G)$ a \emph{dual graph} and usually omit the embedding $g$.
Since $S$ is oriented, the embedding $g$ induces a cyclic ordering on the edges at each trivalent vertex of $G$. 
A graph with a cyclic ordering on the edges at each vertex is usually called a \emph{fat graph}.

For a maximal tree $T$ of the dual graph $G$, we give a presentation of $\pi_1(S)$ as follows.
(Concrete examples are given in Section \ref{sec:example}.)
Let $S_0$ be the surface obtained by removing the pants curves intersecting $G \setminus T$.
Since $G \setminus T$ consists of $g$ oriented edges, $S_0$ is a sphere with $2g+b$ holes.
We denote the edges $G \setminus T$ by $u_1, \dots ,u_g$. 
Let $s_{i,+}$ be the component of $\partial S_0$ which $u_i$ enters, and $s_{i,-}$ the component of $\partial S_0$ from which $u_i$ emanates.  
We take a base point $*$ on $T$.
Let $\alpha_i$ (resp. $\alpha_{g+i}$) $(i=1,\dots, g)$ be an element of $\pi_1(S_0,*)$ which starts at $*$ following $G$, 
then goes along $s_{i,+}$ (resp. $s_{i,-}$) once in the counterclockwise direction and ends at $*$ following $G$.
We denote the boundary components of $S$ by $b_1, \dots, b_b$.
Let $\delta_i$ be an element of $\pi_1(S_0,*)$ which starts at $*$ following $G$, then goes along $b_i$ once in the counterclockwise direction and ends at $*$ following $G$.
Then $\alpha_i$, $\beta_i$, $\delta_i$ form a system of generators of $\pi_1(S_0,*)$ with one relation
\begin{equation}
\label{eq:as_HNN_extensions}
\alpha_{1} \alpha_{g+1} \dots \alpha_{g} \alpha_{g+g} \delta_1 \dots \delta_b = 1
\end{equation}
where in general the order of the product is a permutation of $\{\alpha_1, \dots,\alpha_{2g}, \delta_1, \dots, \delta_b \}$ 
depending on the choice of $G$ and $T$. 
A presentation of $\pi_1(S,*)$ is obtained from $\pi_1(S_0,*)$ by HNN extensions.
Actually, let $\beta_i$ be a path following $G$ which starts at $*$ through $u_i$ in the direction and ends at $*$, 
then $\{\alpha_1, \dots, \alpha_{2g}, \beta_1,\dots ,\beta_g\}$ satisfy the relations
\[
{\alpha_{g+i}}^{-1} = {\beta_i}^{-1} \alpha_{i} \beta_i \quad (i=1, \dots g), \\
\]
and $\pi_1(S,*)$ has the following presentation: 
\[
\begin{split}
\langle \alpha_1, \dots, \alpha_{2g}, \beta_1, \dots, \beta_g, \delta_1, \dots, \delta_b \mid 
&{\alpha_{g+i}}^{-1} = {\beta_i}^{-1} \alpha_{i} \beta_i \quad (i=1, \dots g), \\
&\alpha_1 \alpha_{g+1} \alpha_2 \alpha_{g+2} \dots \alpha_g \alpha_{2g} \delta_1 \dots \delta_b = 1 \rangle.
\end{split}
\]
Eliminating $\alpha_{g+i}$ $(i=1,\dots, g)$, we obtain a presentation
\[
\langle \alpha_1,\beta_1, \dots, \alpha_g, \beta_g, \delta_1, \dots, \delta_b \mid  [\alpha_1,\beta_1^{-1}] \cdots [\alpha_g,\beta_g^{-1}] \delta_1 \cdots \delta_b = 1 \rangle,
\]
although the relation may differ from the above one depending on the choice of $G$ and $T$.

We will introduce our parameters in Section \ref{sec:eigenvalue_paramter} and Section \ref{sec:twist_paramter}.
Then we will give explicit matrix generators corresponding to $\alpha_i$, $\beta_i$ and $\delta_j$ 
for our parametrization of $\PSLC$- or $\SLC$-representations in Section \ref{sec:matrix_generators}.

\section{Eigenvalue parameter}
\label{sec:eigenvalue_paramter}
%
%
\subsection{$\SLC$ representations}
\label{subsec:eigen_param_for_SL2C}
Let $C = c_1 \cup \dots \cup c_{3g-3+b}$ be a a pants decomposition and $G$ a dual graph.
We denote the components of $\partial S$ by $b_1, \dots, b_b$.
Let $\rho: \pi_1(S) \to \SLC$ be a representation satisfying the two conditions (C1)--(C2).
To each interior edge of $G$ which intersects the interior pants curve $c_i$, we assign one of the eigenvalues of $\rho(c_i)$.
(We remark that the set of the eigenvalues of $\rho(\gamma)$ for $\gamma \in \pi_1(S)$ only depends on the unoriented free homotopy class of $\gamma$.)
To each boundary edge of $G$ whose univalent vertex is on $b_i$, we assign one of the eigenvalues of $\rho(b_i)$.
In this way, we assign a complex number $e_i$ to each edge of $G$.
We call $e_i$ an \emph{eigenvalue parameter} of the edge.
Since there are two choices of the eigenvalues ${e_i}^{\pm 1}$ for each edge, 
there are $2^{3g-3+2b}$ choices of eigenvalue parameters. 

Define a subset $E(S,C)$ of $\mathbb{C}^{3g-3+2b}$ by
\begin{equation}
\label{eq:irred_subset_for_pants}
\begin{split}
E(S,C) = \{ (e_1, e_2, \dots, e_{3g-3+2b}) &\mid e_i \in \mathbb{C} \setminus \{ 0, \pm 1\} ,  \\ 
& e_i^{\pm 1} e_j^{\pm 1} e_k^{\pm 1} \neq 1
\textrm{ for $\{e_i,e_j,e_k\} \in \mathcal{P}$} \}
\end{split}
\end{equation}
where $\mathcal{P}$ is the set of triples of eigenvalue parameters each of which belongs to a pair of pants  $P \subset S \setminus N(C)$.
By Proposition \ref{prop:domain}, the set of the eigenvalue parameters is contained in $E(S,C)$.
Since eigenvalues are invariant under conjugation, we obtained a multivalued (one to $2^{3g-3+2b}$) map $X_{SL}(S,C) \to E(S,C)$.
By Corollary \ref{cor:conj_classes_of_pants_group_rep_into_SL}, we have a map 
\[
X_{SL}(S,C) \to E(S,C)/(\mathbb{Z}/2\mathbb{Z})^{3g-3+2b} 
\]
where the action of $(\mathbb{Z}/2\mathbb{Z})^{3g-3+2b}$ on $E(S,C)$ is defined by (\ref{eq:action_of_Z/2Z^3}).

\subsection{$\PSLC$ representations}
Let $\rho$ be a $\PSLC$-representation satisfying the conditions (C1)--(C2).
To each edge of $G$, we can assign one of the eigenvalues as in the $\SLC$ case.
But there are two choices of signs for each pants curve, since the matrix is well-defined up to sign.
To fix the signs, we take a lift of the restriction of $\rho$ to each pair of pants $P \subset S \setminus N(C)$ 
and assign the eigenvalues of the lifted representation.
But we may not be able to fix signs globally, since there are two pairs of pants adjacent to a pants curve in general.
\begin{proposition}
\label{prop:to_define_signs_of_eigenvalues}
Let $\rho : \pi_1(S) \to \PSLC$ be a representation satisfying (C1) and (C2).
One can choose the signs of the eigenvalue parameters globally if and only if $\rho$ lifts to a $\SLC$-representation.
\end{proposition}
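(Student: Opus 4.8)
The plan is to establish a correspondence between global choices of signs and global $\SLC$-lifts, from which both implications follow at once. First I would reinterpret the sign data: for a pants curve $c_i$ with $\rho(c_i) \in \PSLC$ having two fixed points, fixing the fixed point $x_i$ (via the edge orientation) makes the eigenvalue parameter $e_i$ well-defined up to sign, and since $M(-e_i; x_i, y_i) = -M(e_i; x_i, y_i)$ by (\ref{eq:two_fixed_points_case}), a choice of sign of $e_i$ is exactly a choice of lift $\widetilde{\rho(c_i)} \in \SLC$ of $\rho(c_i)$. Moreover, as observed in \S\ref{subsec:pants_rep_of_PSL2C}, a lift of $\rho|_{\pi_1(P)}$ to $\SLC$ for a pair of pants $P$ is exactly a choice of signs of its three eigenvalue parameters lying in the correct coset under the $H^1(P;\mathbb{Z}/2\mathbb{Z})$-action, i.e.\ those for which $\widetilde{\rho(\gamma_1)}\widetilde{\rho(\gamma_2)}\widetilde{\rho(\gamma_3)} = I$ rather than $-I$. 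Thus ``choosing the signs of the eigenvalue parameters globally'' means choosing a sign of $e_i$ for every edge so that on each pair of pants the three resulting signs come from an honest $\SLC$-lift of $\rho|_{\pi_1(P)}$.

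For the ``if'' direction, suppose $\rho$ lifts to $\tilde\rho : \pi_1(S) \to \SLC$. For each pants curve $c_i$, set $\widetilde{\rho(c_i)} = \tilde\rho(\gamma_i)$, which selects a sign of $e_i$. On each pair of pants $P$ the restriction $\tilde\rho|_{\pi_1(P)}$ is a genuine $\SLC$-representation, so these signs are consistent with a lift on $P$. It remains to check that the sign assigned to an interior curve $c_i$ agrees as seen from its two adjacent pants. This is immediate: the two boundary generators representing $c_i$ in the two pants are conjugate up to inversion in $\pi_1(S)$, so under the single global $\tilde\rho$ their lifts are conjugate up to inversion, and the sign of the eigenvalue at a fixed point is unchanged under both conjugation and $A \mapsto A^{-1}$ (as $(-e)^{-1} = -e^{-1}$). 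Hence both pants induce the same sign of $e_i$, and the signs are chosen globally.

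For the ``only if'' direction, suppose a consistent global choice of signs is given. On each pair of pants $P$ it determines a lift $\tilde\rho_P : \pi_1(P) \to \SLC$, and consistency means that for every interior curve these lifts agree on the cyclic subgroup it generates. I would then assemble a global lift using the presentation of $\pi_1(S)$ built from the dual graph and a maximal tree $T$ in Section \ref{sec:pants_decomp}: $\pi_1(S)$ is obtained from the groups $\pi_1(P)$ by amalgamating along the pants curves in $T$ and by HNN extensions along the edges in $G \setminus T$. Defining $\tilde\rho$ on the generators $\alpha_j, \delta_k$ via the $\tilde\rho_P$ and choosing any $\SLC$-lift of each stable letter $\beta_i$, the amalgamation relations hold exactly because the signs match on shared curves, and for each HNN relation ${\alpha_{g+i}}^{-1} = {\beta_i}^{-1}\alpha_i\beta_i$ the conjugation is insensitive to the overall sign of the lift of $\beta_i$, so the relation holds exactly precisely when the signs of the two boundary copies agree --- which is the consistency hypothesis. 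Hence all relations of the presentation are satisfied and $\tilde\rho$ is a well-defined lift of $\rho$.

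The main obstacle is this backward gluing, in particular the HNN case where a single curve bounds one pair of pants on both sides: there the sign compatibility is a genuine constraint rather than an automatic consequence, and it is exactly the local contribution to the lifting obstruction recorded through $w_2(\rho)$ in \S\ref{subsec:char_var}. One must therefore verify carefully that global consistency of signs is both necessary and sufficient for every amalgamation and HNN relation to lift without a residual $-I$, which is what closes the equivalence.
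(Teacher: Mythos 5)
Your argument is correct and follows essentially the same route as the paper: the forward direction by restricting a global $\SLC$-lift to each pair of pants, and the converse by using the dual-graph presentation with a maximal tree to lift on the pants pieces and then extend over the HNN generators $\beta_i$, checking that the surface relation closes up to $I$ rather than $-I$. You spell out a few steps the paper leaves as ``clear'' (notably why the two pants adjacent to an interior curve induce the same sign, and why the HNN relations lift exactly), but the decomposition and the key verification are the same.
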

\begin{proof}
When $S$ has non-empty boundary, there is nothing to prove in this case since any $\PSLC$-representation of $\pi_1(S)$ lifts to an $\SLC$-representation.
So we assume that $S$ is a closed surface of genus $g$.
If $\rho$ lifts to an $\SLC$-representation, it is clear that we can choose the signs of the eigenvalue parameters globally.
Conversely we assume that the signs of the eigenvalue parameters are globally defined.
We take a maximal tree $T$ of $G$ and let $S_0$ be the sphere with $2g$ holes constructed in \S \ref{subsec:dual_graph}.
Let $\alpha_i$ $(i=1, \dots, 2g)$ be the elements of $\pi_1(S_0)$ and $\beta_i$ $(i=1,\dots,g)$ be the elements of $\pi_1(S)$ defined in \S \ref{subsec:dual_graph}.
We denote the eigenvalue parameter corresponding to the free homotopy class of $\alpha_i$ by $e_i$ for $i=1, \dots, g$.
We choose a lift $\widetilde{\rho}(\alpha_i)$ of $\rho(\alpha_i)$ to $\SLC$ so that $\tr(\widetilde{\rho}(\alpha_i)) = e_i + {e_i}^{-1}$.
Since $\alpha_{g+i}$ is a conjugation of ${\alpha_i}^{-1}$ in $\pi_1(S)$, we have $\tr( \rho(\alpha_{i}) ) =\tr( \rho(\alpha_{g+i}) )$ up to sign.
We choose a lift $\widetilde{\rho}(\alpha_{g+i})$ of $\rho(\alpha_{g+i})$ to $\SLC$ so that $\tr(\widetilde{\rho}(\alpha_{g+i})) = e_i + {e_i}^{-1}$ for $i=1,\dots,g$.
Next we choose a lift of $\widetilde{\rho}(\beta_i)$ to $\SLC$ for $i=1, \dots, g$ arbitrary.
Now we have $\widetilde{\rho}(\alpha_{g+i})^{-1} = \widetilde{\rho}(\beta_i)^{-1} \widetilde{\rho}(\alpha_i) \widetilde{\rho}(\beta_i)$ for $i=1,\dots, g$.
By our choice of the signs, they satisfy
\[
[\widetilde{\rho}(\alpha_1),\widetilde{\rho}(\beta_1)^{-1}] \cdots [\widetilde{\rho}(\alpha_g),\widetilde{\rho}(\beta_g)^{-1}] 
= \widetilde{\rho}(\alpha_{1}) \widetilde{\rho}(\alpha_{g+1}) \dots \widetilde{\rho}(\alpha_{g}) \widetilde{\rho}(\alpha_{g+g}) 
= I
\]
in $\SLC$.
(As we remarked in \S \ref{subsec:dual_graph}, the order of the product is a permutation of $\widetilde{\rho}(\alpha_1), \dots, \widetilde{\rho}(\alpha_g)$ 
and $\widetilde{\rho}(\beta_1), \dots, \widetilde{\rho}(\beta_g)$ in general depending on the choice of $G$ and $T$.)
This means that $\widetilde{\rho}(\rho)$ gives an $\SLC$-representation.
\end{proof}

Once we fix the signs of the eigenvalue parameters, the other lifts are obtained by the action of the following group. 
Consider the set
\[
\epsilon(S,C) = \{ (\epsilon_1, \dots, \epsilon_{3g-3+2b}) \mid \epsilon_i = \pm 1, \quad  \epsilon_i \epsilon_j \epsilon_k = 1 \textrm{ for } \{e_i, e_j, e_k\} \in \mathcal{P} \}, 
\] 
where $\mathcal{P}$ is the set defined in the previous subsection.
This set has a group structure by componentwise multiplication, and acts on $E(S,C)$ by 
$(e_1, \dots, e_{3g-3+2b}) \mapsto (\epsilon_1 e_1, \dots, \epsilon_{3g-3+2b} e_{3g-3+2b})$.
All lifts of $\rho$ are related by the action of $\epsilon(S,C)$.
In \cite{fujita}, a group similar to $\epsilon(S,C)$ can be described in terms of the homology of $G$.
In our case, we have the following.
\begin{lemma}
\label{lem:epsilon(S,C)_as_homology}
$\epsilon(S,C)$ is canonically isomorphic to $H_1(G,\partial G ;\mathbb{Z}/2 \mathbb{Z})$ ($H_1(G;\mathbb{Z}/2 \mathbb{Z})$ if $S$ has no boundary), 
where $\partial G$ is the subset consisting of the univalent vertices. 
\end{lemma}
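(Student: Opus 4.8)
The plan is to construct the isomorphism directly from the combinatorial description of $\epsilon(S,C)$ and the cellular chain complex of $G$ relative to $\partial G$. Recall that $G$ is a graph with $3g-3+2b$ edges, $2g-2+b$ trivalent vertices, and $b$ univalent vertices, and each edge carries an eigenvalue parameter $e_i$. An element $(\epsilon_1, \dots, \epsilon_{3g-3+2b}) \in \epsilon(S,C)$ is precisely an assignment of a sign to each edge subject to the constraint $\epsilon_i \epsilon_j \epsilon_k = 1$ at each pair of pants, i.e. at each \emph{trivalent} vertex. Writing signs additively in $\mathbb{Z}/2\mathbb{Z}$ (via $\epsilon \mapsto (1-\epsilon)/2$, so that $\epsilon_i \epsilon_j \epsilon_k = 1$ becomes the sum being $0$), an element of $\epsilon(S,C)$ is exactly a $1$-cochain (equivalently, after dualizing, a $1$-chain) on the edges whose boundary vanishes at every interior (trivalent) vertex, with no constraint imposed at the univalent vertices.

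The key observation is that this constraint set is literally the definition of a relative $1$-cycle. First I would set up the cellular chain complex of the pair $(G, \partial G)$ with $\mathbb{Z}/2\mathbb{Z}$ coefficients, where $\partial G$ denotes the univalent vertices:
\[
C_1(G; \mathbb{Z}/2\mathbb{Z}) \xrightarrow{\ \partial\ } C_0(G, \partial G; \mathbb{Z}/2\mathbb{Z}).
\]
Here $C_1$ is freely generated by the edges and $C_0(G,\partial G)$ is freely generated by the \emph{trivalent} vertices only, since the univalent vertices are killed in the relative quotient. The boundary map $\partial$ sends an edge to the sum of its trivalent endpoints. Since $G$ is one-dimensional, $H_1(G,\partial G; \mathbb{Z}/2\mathbb{Z}) = \ker(\partial)$. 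I would then identify a $1$-chain $\sum a_i \, \text{(edge}_i)$ with the sign tuple $\epsilon_i = (-1)^{a_i}$, and check that the condition $\partial(\sum a_i e_i) = 0$ in $C_0(G, \partial G)$ says exactly that, at each trivalent vertex, the sum of the $a_i$ over the three incident edges is zero, which is the multiplicative relation $\epsilon_i \epsilon_j \epsilon_k = 1$ defining $\epsilon(S,C)$. The univalent vertices impose no relation precisely because they are quotiented out in the relative complex, matching the fact that a boundary edge's sign is unconstrained. When $S$ has no boundary there are no univalent vertices and the pair degenerates to $H_1(G; \mathbb{Z}/2\mathbb{Z})$.

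Having matched the underlying sets and relations, it remains to verify that this bijection is a group isomorphism, which is immediate: the group law on $\epsilon(S,C)$ is componentwise multiplication of signs, corresponding to addition of $1$-chains in $\mathbb{Z}/2\mathbb{Z}$ coefficients, and the bijection $a_i \leftrightarrow \epsilon_i$ is additive-to-multiplicative. The word ``canonically'' is justified because the correspondence uses only the fixed identification of edges with pants curves and of trivalent vertices with pairs of pants, requiring no arbitrary choices. I do not expect a serious obstacle here; the one point demanding care is the bookkeeping at univalent vertices, namely confirming that passing to the relative complex $(G, \partial G)$ is exactly what removes the would-be constraint at boundary edges, so that the ranks agree. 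As a consistency check one can count dimensions: $\ker(\partial)$ has dimension $(3g-3+2b) - \mathrm{rank}(\partial)$, and since $G$ is connected after collapsing $\partial G$ the map $\partial$ onto the $(2g-2+b)$-dimensional space of trivalent vertices has the expected rank, giving $\dim H_1(G,\partial G;\mathbb{Z}/2\mathbb{Z}) = g$ in the closed-surface normalization, consistent with the geometric expectation that the lifts of $\rho$ are parametrized by $H^1(S;\mathbb{Z}/2\mathbb{Z})$.
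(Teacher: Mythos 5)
Your proof is correct and is essentially the paper's own argument: the author likewise identifies a sign tuple with a $\mathbb{Z}/2\mathbb{Z}$-valued $1$-chain on the edges of $G$ and observes that the relation $\epsilon_i\epsilon_j\epsilon_k=1$ at each pair of pants is exactly the relative cycle condition, with univalent vertices imposing no constraint. Your version just spells out the chain complex and the dimension count more explicitly (only your closing aside is slightly off: the lifts of $\rho$ are parametrized by $H^1(S;\mathbb{Z}/2\mathbb{Z})\cong(\mathbb{Z}/2\mathbb{Z})^{2g}$, of which $\epsilon(S,C)\cong(\mathbb{Z}/2\mathbb{Z})^{g}$ records only the effect on the eigenvalue parameters).
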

\begin{proof}
Let $(\epsilon_1, \dots, \epsilon_{3g-3+2b}) \in \epsilon(S,C)$.
Assign $\epsilon_i$ to the corresponding edge of $G$, we can regard $(\epsilon_1, \dots, \epsilon_{3g-3+2b})$ as a 1-chain with $\mathbb{Z}/2\mathbb{Z}$-coefficients.
Since we have $\epsilon_i \epsilon_j \epsilon_k = 1$ for $\{e_i, e_j, e_k\} \in \mathcal{P}$, this chain is a cycle.
Conversely a 1-cycle of $G$ with $\mathbb{Z}/2\mathbb{Z}$-coefficients gives an element of $\epsilon(S,C)$. 
\end{proof}
As a consequence of Lemma \ref{lem:epsilon(S,C)_as_homology}, we have $\epsilon(S,C) \cong (\mathbb{Z}/2\mathbb{Z})^{g+b-1}$ if $S$ has non-empty boundary and 
$\epsilon(S,C) \cong (\mathbb{Z}/2\mathbb{Z})^{g}$ if $S$ is closed.

We denote the subset of $R_{PSL}(S,C)$ consisting of the liftable representations by $R'_{PSL}(S,C)$
and denote the quotient of $R'_{PSL}(S,C)$ by conjugation by $X'_{PSL}(S,C)$.
As in the case of $X_{SL}(S,C)$, we have a map
\[
X'_{PSL}(S,C) \to ( E(S,C)/H_1(G,\partial G;\mathbb{Z}/2 \mathbb{Z}) )/ (\mathbb{Z}/2\mathbb{Z})^{3g-3+2b}.
\]

\subsection{Non-liftable $\PSLC$ representations}
We denote the subset of $R_{PSL}(S,C)$ consisting of the non-liftable representations by $R''_{PSL}(S,C)$
and denote the quotient of $R''_{PSL}(S,C)$ by conjugation by $X''_{PSL}(S,C)$.
So we have $R_{SL}(S,C) = R'_{SL}(S,C) \sqcup R''_{SL}(S,C)$ and $X_{SL}(S,C) = X'_{SL}(S,C) \sqcup X''_{SL}(S,C)$.
$R''_{PSL}(S,C)$ (resp. $X''_{PSL}(S,C)$) is empty if $S$ has a boundary.

For a non-liftable representation, we can not choose the signs of the eigenvalues globally, 
but we also define the eigenvalue parameters as follows.

We assume that $S$ is a closed surface of genus $g$. (Thus $g>0$.)
We take a pants decomposition $C$ and a dual graph $G$.
Let $\rho : \pi_1(S) \to \PSLC$ be a non-liftable representation satisfying (C1)-(C2).
We fix one interior edge $f_1$ of $G$.
Consider the surface $S'$ obtained from $S$ cut along the interior pants curve corresponding to $f_1$.
We denote the two boundary components of $S'$ by $c_{1,+}$ and $c_{1,-}$.
Since $S'$ has a boundary, we have a lift $\widetilde{\rho} : \pi_1(S') \to \SLC$ of $\rho$. 
Since $\rho : \pi_1(S) \to \PSLC$ does not lift to an $\SLC$-representation, we have $\tr(\widetilde{\rho}(c_{1,+})) = - \tr(\widetilde{\rho}(c_{1,-}))$ 
by Proposition \ref{prop:to_define_signs_of_eigenvalues}.
We define the eigenvalue parameter for each of the other edges $f_i$ $(i \neq 1)$ by one of the eigenvalue of $\widetilde{\rho}(c_i)$ 
where $c_i$ is the pants curve corresponding to $f_i$. 
We define the eigenvalue parameter for $f_1$ by one of the eigenvalue of $\widetilde{\rho}(c_{1,+})$. 

The group $\epsilon(S,C) \cong H_1(G,\mathbb{Z}/2 \mathbb{Z})$ also acts on the parameter space $E(S,C)$ and we also have a map
\[
X''_{PSL}(S,C) \to ( E(S,C)/H_1(G;\mathbb{Z}/2 \mathbb{Z}) )/(\mathbb{Z}/2\mathbb{Z})^{3g-3+2b} .
\]

\section{Twist parameter}
\label{sec:twist_paramter}
%
%


In this section, we introduce the twist parameters of a surface group representation with a pants decomposition and a dual graph, which describes how 
two representations of the pants group are combined along their common interior pants curve.

\subsection{}
\label{subsec:definition_of_twist_parameters}
Let $C$ be a pants decomposition of a surface $S=S_{g,b}$ and $G$ a dual graph. 
Assume $\rho: \pi_1(S) \to \SLC$ satisfies the conditions (C1)--(C2).
We take a system of eigenvalue parameters $e_i$ for all pants curves.
For an interior edge $f_1$ of $G$, we will define the twist parameter at $f_1$.
Let $c_1 \subset C$ be the interior pants curve transverse to $f_1$.
$c_1$ is contained in a four-holed sphere or a one-holed torus.
First we consider the four-holed sphere case.

Let $S' \subset S$ be the four-holed sphere containing $c_1$ in the interior.
The restriction of the dual graph $G$ to $S'$ gives a dual graph $G'$ on $S'$ by adding univalent vertices on the boundary if necessary.
Let $P_1$ and $P_2$ be the two pairs of pants of $S' \setminus N(c_1)$.
We assume that the interior edge $f_1$ directed from $P_1$ to $P_2$.
We regard the trivalent vertices of $G'$ as a base point of $P_1$ and $P_2$. 
We take generators $\gamma_1, \gamma_2, \gamma_3 \in \pi_1(P_1)$ and $\gamma_1',\gamma_2',\gamma_3' \in \pi_1(P_2)$ 
as in Figure \ref{fig:generators_for_pants} so that both $\gamma_1$ and $\gamma_1'$ run around $c_1$.
Connect the base points along $G'$, we obtain a set of generators of $\pi_1(S')$. 
We rewrite $\gamma_2'$ and $\gamma_3'$ by $\gamma_4$ and $\gamma_5$ and $\gamma_1'$ can be identified with $\gamma_1^{-1}$ (Figure \ref{fig:twist_parameter}).
Now we have $\gamma_1\gamma_2\gamma_3 =1$ and $\gamma_1^{-1} \gamma_4 \gamma_5 = 1$, and therefore $\gamma_2\gamma_3 \gamma_4 \gamma_5 = 1$.

Let $\rho'$ be the restriction $\rho|_{\pi_1(S')}$.
This is only well-defined up to conjugation since there are many ways to choose a path from the base point of $S$ to $S'$, but we fix one in the conjugacy class.
Let $f_1$ be the interior edge of $G'$ and $f_2,f_3,f_4,f_5$ be the boundary edges of $G'$ corresponding to $\gamma_2, \gamma_3, \gamma_4, \gamma_5$.
If $f_i$ is outward (resp. inward) oriented, let $x_i$ be the fixed point of $\rho'(\gamma_i)$ corresponding to $e_i$ (resp. $e_i^{-1}$).
We assign the element $x_i \in \mathbb{C}P^1$ to each edge of $G'$.
Since $\rho'(\gamma_1) = M(e_1; x_1,y_1) = M({e_1}^{-1};x_1,y_1)^{-1} = \rho'(\gamma_1)^{-1}$ where $y_1$ is the other fixed point of $\rho'(\gamma_1)$, 
the assignment is well-defined at $f_1$. 

By Lemma \ref{lem:two_fixed_pts_with_one_point_behavior}, there exists a unique complex number $t_1 \in \mathbb{C}^*$ such that
\begin{equation}
\label{eq:gluing_map}
M(\sqrt{-t_1};x_1,y_1) = 
\pm \begin{pmatrix} x_1 & y_1 \\ 1 & 1 \end{pmatrix} 
\begin{pmatrix} \sqrt{-t_1} & 0 \\ 0 & 1/\sqrt{-t_1} \end{pmatrix}
\begin{pmatrix} x_1 & y_1 \\ 1 & 1  \end{pmatrix}^{-1} 
\end{equation}
sends $x_2$ to $x_5$.
(We use $\sqrt{-t_1}$ instead of $\sqrt{t_1}$ since it is natural when we parametrize Fuchsian representations (Section \ref{sec:psl2r}).)
We call this $t_1$ the \emph{twist parameter} of the edge $f_1$.
When we replace $\rho'$ by a conjugation $A^{-1} \rho' A$, the fixed points $x_i$ and $y_i$ are changed to $A x_i$ and $A y_i$.
Therefore $t_1$ only depends on the conjugacy class of $\rho'$, once we fix the eigenvalue parameters $e_i$.  
\begin{theorem}
Let $e_1,\dots, e_5$ be the eigenvalue parameters, $t_1$ be the twist parameter and $x_1,\dots, x_5$ the fixed points of 
$\rho(\gamma_1), \dots ,\rho(\gamma_5)$ corresponding to $e_1, \dots, e_5$ (Figure \ref{fig:twist_parameter}).
Then
\begin{equation}
\label{eq:x4}
\begin{split}
x_4 = \frac{a_1}{a_2}&, \\
a_1 &=
{e_1}(
-({e_1}{e_3}-{e_2})({e_1}{e_4}-{e_5}){t_1}
+{e_3}({e_1}{e_5}-{e_4}) )
{x_1}({x_2}-{x_3}) \\
&+{e_1}^{2}{e_2}({e_1}{e_5}-{e_4}){x_2}({x_3}-{x_1}) 
+{e_2}({e_1}{e_5}-{e_4}) {x_3}({x_1}-{x_2}), \\
a_2 &=
{e_1}(
-({e_1}{e_3}-{e_2})({e_1}{e_4}-{e_5}){t_1}
+{e_3}({e_1}{e_5}-{e_4}) ) ({x_2}-{x_3}) \\
&+{e_1}^{2}{e_2}({e_1}{e_5}-{e_4}) ({x_3}-{x_1})
+{e_2}({e_1}{e_5}-{e_4}) ({x_1}-{x_2}), \\
\end{split}
\end{equation}
\begin{equation}
\label{eq:x5}
x_5 = 
\frac{
(({e_1}{e_3}-{e_2}){t_1}+{e_1}{e_3} ) {x_1}({x_2}-{x_3})  
+{e_1}^{2}{e_2}{x_2}({x_3}-{x_1}) +{e_2}{x_3}({x_1} -{x_2}) 
}
{
(({e_1}{e_3}-{e_2}){t_1}+{e_1}{e_3} )({x_2}-{x_3}) 
+{e_1}^{2}{e_2}({x_3}-{x_1}) +{e_2}({x_1} -{x_2})
}.
\end{equation}
Conversely we have
\begin{equation}
\label{eq:x2}
\begin{split}
x_2 
&= \frac{ ((e_1 e_5 - e_4) {t_1}^{-1}+ e_1 e_5) x_1 (x_5-x_4) + {e_1}^2 e_4 x_5 (x_4-x_1) + e_4 x_4 (x_1-x_5)}
{ ((e_1 e_5 - e_4) {t_1}^{-1} +e_1 e_5) (x_5-x_4) +{e_1}^2 e_4 (x_4-x_1) +e_4 (x_1-x_5) }, \\
\end{split}
\end{equation}
\begin{equation}
\label{eq:x3}
\begin{split}
x_3 = \frac{a_1}{a_2}, & \\
a_1 &= e_1 (-(e_1 {e_5} -{e_4}) (e_1 {e_2} -{e_3}) {t_1}^{-1}+{e_5} (e_1 {e_3}-{e_2})) x_1 (x_5-x_4) \\
+&e_1^2 {e_4} (e_1 {e_3}-{e_2}) x_5 (x_4-x_1)+{e_4} (e_1 {e_3}-{e_2}) x_4 (x_1-x_5), \\
a_2 &= e_1 (-(e_1 {e_5}-{e_4}) (e_1 {e_2}-{e_3}) {t_1}^{-1}+{e_5} (e_1 {e_3}-{e_2})) (x_5-x_4) \\
+&e_1^2 {e_4} (e_1 {e_3}-{e_2}) (x_4-x_1)+{e_4} (e_1 {e_3}-{e_2}) (x_1-x_5). \\
\end{split}
\end{equation}
If some boundary edges are inward oriented, replace the corresponding parameters $e_i$ with $e_i^{-1}$.
\end{theorem}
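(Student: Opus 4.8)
The plan is to reduce everything to the normalized situation used in the proof of Proposition~\ref{prop:pants_rep}, where the triple $(x_1,x_2,x_3)$ is sent to the standard triple $(0,\infty,1)$ by the matrix
\[
A = \begin{pmatrix} x_2(x_3-x_1) & x_1(x_2-x_3) \\ (x_3-x_1) & (x_2-x_3) \end{pmatrix}.
\]
First I would work entirely inside $P_1$: by Proposition~\ref{prop:pants_rep}, the representation $\rho'|_{\pi_1(P_1)}$ is completely determined by $(e_i,x_i)_{i=1,2,3}$, so in particular the second fixed point $y_1$ of $\rho'(\gamma_1)$ is given explicitly by (\ref{eq:other_fixed_point}). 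Next I would describe $P_2$ after conjugating by the twisting matrix. By definition of the twist parameter, the matrix $M(\sqrt{-t_1};x_1,y_1)$ from (\ref{eq:gluing_map}) is the element that carries $x_2$ to $x_5$; since $\gamma_1' = \gamma_1^{-1}$ forces the pair $(x_1,y_1)$ to be shared by both pants, the fixed points $x_4,x_5$ of $\rho'(\gamma_4),\rho'(\gamma_5)$ are the images under the conjugated representation of the standard configuration in $P_2$.

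The key computational device is Lemma~\ref{lem:two_fixed_pts_with_one_point_behavior}: since $M(t;x_1,y_1)$ fixes $x_1$ and $y_1$, the condition that it send $x_2\mapsto x_5$ pins down $t^2$, and conjugation by such a diagonal-after-$A^{-1}$ matrix acts on the remaining fixed points by an explicit M\"obius transformation. Concretely, I would conjugate the whole picture by $A^{-1}$ so that $x_1=0$, $x_2=\infty$, $x_3=1$; in these coordinates the twisting matrix $M(\sqrt{-t_1};0,y_1')$ is diagonal-like and its action on the fixed points is a simple multiplication/linear-fractional map governed by $t_1$. Applying Proposition~\ref{prop:pants_rep} to $P_2$ in the standard frame then yields $x_4',x_5'$ as rational functions of $e_1,\dots,e_5$ and $t_1$, after which $x_4 = A\cdot x_4'$, $x_5 = A\cdot x_5'$ returns to the original coordinates. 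Because $A$ is an affine-type M\"obius map sending $(0,\infty,1)\mapsto(x_1,x_2,x_3)$, pushing forward the normalized answer produces precisely the symmetric expressions in (\ref{eq:x4}) and (\ref{eq:x5}), each of the form $x_i x_j(\cdots)$ summed cyclically.

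For the converse formulas (\ref{eq:x2}) and (\ref{eq:x3}), I would exploit the symmetry of the gluing: reversing the orientation of $f_1$ swaps the roles of the two pants, which amounts to replacing $t_1$ by $t_1^{-1}$ and interchanging the index sets $\{2,3\}\leftrightarrow\{5,4\}$ together with $e_1\leftrightarrow e_1^{-1}$. Thus (\ref{eq:x2}) and (\ref{eq:x3}) should follow from (\ref{eq:x5}) and (\ref{eq:x4}) by this substitution, rather than by a fresh calculation; I would verify that the $M(e_1;x_1,y_1)=M(e_1^{-1};x_1,y_1)^{-1}$ identity noted after (\ref{eq:gluing_map}) makes the symmetry exact. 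The final remark about inward-oriented boundary edges is handled by the convention from Section~\ref{sec:twist_paramter} that $x_i$ is the fixed point for $e_i$ or $e_i^{-1}$ according to orientation, so replacing $e_i$ by $e_i^{-1}$ is automatic.

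I expect the main obstacle to be purely organizational: the formulas (\ref{eq:x4})--(\ref{eq:x3}) are large rational expressions, and the genuine difficulty is bookkeeping the conjugation by $A$ and the twisting matrix so that the cyclic symmetry in $(x_1,x_2,x_3)$ is preserved and no spurious common factors are dropped. The conceptual content is light --- everything rests on Proposition~\ref{prop:pants_rep}, Lemma~\ref{lem:two_fixed_pts_with_one_point_behavior}, and the definition of $t_1$ --- but carrying out the substitution cleanly, ideally with the normalization $(x_1,x_2,x_3)=(0,\infty,1)$ to suppress the denominators until the last step, is where the care is needed.
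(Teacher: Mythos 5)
Your derivation of (\ref{eq:x4}) and (\ref{eq:x5}) is essentially the paper's: the author writes $x_5=M(\sqrt{-t_1};x_1,y_1)\cdot x_2$ as in (\ref{eq:tentative_for_x_5}), substitutes for $y_1$ the expression (\ref{eq:y1_for_x5_and_x4}) obtained from (\ref{eq:other_fixed_point}) applied to $P_1$, and then gets $x_4$ by solving the $P_2$-instance of (\ref{eq:other_fixed_point}) (with data $(e_1^{-1},e_4,e_5;x_1,x_4,x_5)$) for $x_4$ and substituting. Your normalization $(x_1,x_2,x_3)=(0,\infty,1)$ followed by pushing forward with $A$ is a harmless repackaging; just note that $x_4$ is pinned down not by Proposition \ref{prop:pants_rep} alone but by the requirement that the second fixed point of $\rho(\gamma_1^{-1})$ computed from the $P_2$ data equal the $y_1$ already computed from $P_1$.

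The genuine gap is your symmetry argument for (\ref{eq:x2}) and (\ref{eq:x3}). The substitution $t_1\mapsto t_1^{-1}$, $e_1\mapsto e_1^{-1}$, $2\leftrightarrow 5$, $3\leftrightarrow 4$ applied to the numerator of (\ref{eq:x5}) yields
\[
\bigl((e_1^{-1}e_4-e_5)t_1^{-1}+e_1^{-1}e_4\bigr)x_1(x_5-x_4)+e_1^{-2}e_5\,x_5(x_4-x_1)+e_5\,x_4(x_1-x_5),
\]
which is not proportional to the numerator of (\ref{eq:x2}) (compare the ratio of the $x_5(x_4-x_1)$ and $x_4(x_1-x_5)$ coefficients: $e_1^{-2}$ versus the required $e_1^{2}$). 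The symmetry fails for a structural reason: the twist matrix sends $x_2$, the \emph{second} marked point of $P_1$ (relation $\gamma_1\gamma_2\gamma_3=1$), to $x_5$, the \emph{third} marked point of $P_2$ (relation $\gamma_1^{-1}\gamma_4\gamma_5=1$), so no index swap exchanging the two pants as ordered triples carries the defining condition ``$x_2\mapsto x_5$'' to itself. Consistently, reversing the orientation of $f_1$ does not simply replace $t_1$ by $t_1^{-1}$: by the paper's own type (I) formula (\ref{eq:reverse_the_orientation_of_an_edge}) it produces $\frac{e_1e_2-e_3}{e_1e_3-e_2}\frac{e_1e_5-e_4}{e_1e_4-e_5}t_1^{-1}$. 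So the verification you defer (that $M(e;x,y)^{-1}=M(e^{-1};x,y)$ makes the symmetry exact) would fail.

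The repair is to mirror the forward computation rather than substitute into its output: $x_2=M(\sqrt{-t_1};x_1,y_1)^{-1}x_5$, i.e.\ (\ref{eq:tentative_for_x_5}) with $x_2\leftrightarrow x_5$ and $t_1\mapsto t_1^{-1}$, but with $y_1$ now eliminated using the $P_2$-instance of (\ref{eq:other_fixed_point}); this gives (\ref{eq:x2}). Then solve the $P_1$-instance of (\ref{eq:other_fixed_point}) for $x_3$ in terms of $y_1,x_1,x_2$ and substitute to get (\ref{eq:x3}). This is what the paper's ``similarly, by direct calculation'' amounts to.
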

\begin{proof}
Since $x_2$ is mapped to $x_5$ by the matrix (\ref{eq:gluing_map}), we have
\begin{equation}
\label{eq:tentative_for_x_5}
\begin{split}
x_5 &= \begin{pmatrix} x_1 & y_1 \\ 1 & 1 \end{pmatrix} 
\begin{pmatrix} -t_1 & 0 \\ 0 & 1 \end{pmatrix}
\begin{pmatrix} x_1 & y_1 \\ 1 & 1  \end{pmatrix}^{-1} \cdot x_2 \\
&= \frac{ (x_2-y_1) t_1 x_1 + (x_2-x_1)y_1}{(x_2-y_1)t_1+(x_2-x_1)}.
\end{split}
\end{equation}
Applying (\ref{eq:other_fixed_point}) to $\rho'|_{P_1}$ we have
\begin{equation}
\label{eq:y1_for_x5_and_x4}
y_{1}= \frac{e_{1}^2 e_{2} x_{2} (x_{1} - x_{3}) + e_{2} x_{3} (x_{2} - x_{1}) + e_{1} e_{3} x_{1} (x_{3}- x_{2})}
{e_{1}^2 e_{2} (x_{1} - x_{3}) + e_{2} (x_{2} - x_{1}) + e_{1} e_{3} (x_{3}- x_{2})}.
\end{equation}
Substitute $y_1$ of (\ref{eq:y1_for_x5_and_x4}) into (\ref{eq:tentative_for_x_5}), we obtain (\ref{eq:x5}).
Applying (\ref{eq:other_fixed_point}) to $\rho'|_{P_2}$, we have
\[
y_{1}= \frac{e_{1}^{-2} e_{4} x_{4} (x_{1} - x_{5}) + e_{4} x_{5} (x_{4} - x_{1}) + e_{1}^{-1} e_{5} x_{1} (x_{5}- x_{4})}
{e_{1}^{-2} e_{4} (x_{1} - x_{5}) + e_{4} (x_{4} - x_{1}) + e_{1}^{-1} e_{5} (x_{5}- x_{4})}.
\]
Solve the equation for $x_4$, we have
\begin{equation}
\label{eq:tentative_for_x4}
x_{4}= \frac{e_{1} e_{5} x_{5} (x_{1} - y_{1}) + e_{1}^{2} e_{4} x_{1} (y_{1} - x_{5}) + e_{4} y_{1} (x_{5}- x_{1})}
{e_{1} e_{5} (x_{1} - y_{1}) + e_{1}^{2} e_{4} (y_{1} - x_{5}) + e_{4} (x_{5}- x_{1})}.
\end{equation}
Substitute $y_1$ of (\ref{eq:y1_for_x5_and_x4}) and $x_5$ of (\ref{eq:x5}) into (\ref{eq:tentative_for_x4}), we obtain (\ref{eq:x4}).
Similarly (\ref{eq:x2}) and (\ref{eq:x3}) follow from direct calculation.
\end{proof}

If $c_1$ is on a one-holed torus $S'$, 
we take a covering corresponding to the circle formed by the edge $f_1$ (the right of Figure \ref{fig:twist_for_one_holed}). 
Then $S'$ is lifted to a four-holed sphere and we can define the twist parameter same as in the four-holed sphere case. 

Combined with the eigenvalue parameters, we have constructed a multivalued map
\[
X_{SL}(S,C) \to E(S,C) \times (\mathbb{C}^*)^{3g-3+b}. 
\]
This is a multivalued map, but once we fix the eigenvalue parameters $\{e_i\}_{i=1,\dots, 3g-3+2b}$, 
the twist parameters $t_i$ only depends on the conjugacy class of $\rho$.
We will consider the change of the twist parameters $\{t_j\}_{j=1,\dots, 3g-3+b}$ under the action of $(\mathbb{Z}/2\mathbb{Z})^{3g-3+2b}$
(it acts as $e_i \mapsto e_i^{-1}$ on $E(S,C)$) in Section \ref{sec:action}.

Since $t_1$ is defined in terms of fixed points, it is also defined for a $\PSLC$-representation satisfying (C1)--(C2) 
once we fix the eigenvalue parameters $\{e_i\}_{i=1,\dots, 3g-3+2b}$.
Since the fixed points does not depend on the choice of a lift to an $\SLC$-representation, the action of $H_1(G,\partial G; \mathbb{Z}/2\mathbb{Z})$ on 
the twist parameters is trivial.
(This can also be checked from the formulae of Lemma \ref{lem:twist_parameter_from_fixed_points}.)
So we also have a multivalued map
\[
X'_{PSL}(S,C) \to (E(S,C)/H_1(G,\partial G;\mathbb{Z}/2\mathbb{Z})) \times (\mathbb{C}^*)^{3g-3+b}.
\]
This induces a map 
\begin{equation}
\label{eq:paramtrization_of_PSL2C_characters}
X'_{PSL}(S,C) \to ( (E(S,C)/H_1(G,\partial G;\mathbb{Z}/2\mathbb{Z})) \times (\mathbb{C}^*)^{3g-3+b} ) / (\mathbb{Z}/2\mathbb{Z})^{3g-3+2b}.  
\end{equation}

\begin{figure}
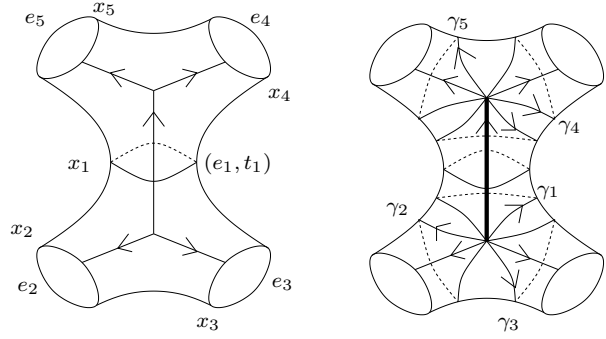

\input{4_holed_with_gr_1.pstex_t}
\hspace{30pt}
\input{gens_4_holed.pstex_t}
\caption{A pants decomposition and a dual graph of the four-holed sphere $S$.
We take the base point $*$ on the central edge and generators $\gamma_1, \dots, \gamma_5 $ of $\pi_1(S, *)$ as above.  
We have $\gamma_1 \gamma_2 \gamma_3 = \gamma_1^{-1} \gamma_4 \gamma_5 = 1$.
Here the matrix which sends $(x_1,x_2,\rho(\gamma_1) x_2)$ to $(x_1, x_5,\rho(\gamma_1) x_5)$ has eigenvalues $\sqrt{-t_1}^{\pm 1}$.}
\label{fig:twist_parameter}
\end{figure}

\begin{figure}
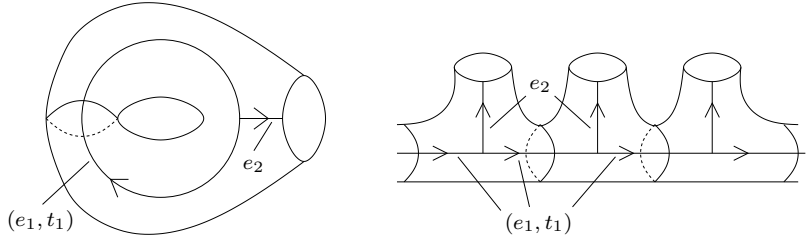

\input{one_holed_with_gr_a.pstex_t}
\hspace{20pt}
\input{one_holed_cover.pstex_t}
\caption{For a one-holed torus, take a covering and define the twist parameter as in the four-holed sphere case.}
\label{fig:twist_for_one_holed}
\end{figure}

We end this section by giving formulae on $t_1$ which will be needed later.
\begin{lemma}
\label{lem:twist_parameter_from_fixed_points}
Let $e_1, \dots, e_5$ be the eigenvalue parameters, the twist parameter $t_1$ and $x_1, \dots, x_5$ the fixed points as before.
Then we have
\begin{align}
\label{eq:t1_from_x1_x2_x3_x5}
t_1 &= -1 +\frac{e_2(1-e_1^2)}{e_2-e_1e_3}[x_5:x_3:x_1:x_2], \\
\label{eq:t1_from_x1_x2_x3_x4}
t_1 &= -\frac{e_1e_5-e_4}{e_1(e_1e_4-e_5)} \left( -1 +\frac{e_2(1-e_1^2)}{e_2-e_1e_3}[x_4:x_3:x_1:x_2] \right), \\
\label{eq:t1_from_x1_x2_x4_x5}
t_1^{-1} &= -1 +\frac{e_4(1-e_1^2)}{e_4-e_1e_5}[x_2:x_4:x_1:x_5], \\
\label{eq:t1_from_x1_x3_x4_x5}
t_1^{-1} &= -\frac{e_1e_3-e_2}{e_1(e_1e_2-e_3)} \left( -1 +\frac{e_4(1-e_1^2)}{e_4-e_1e_5}[x_3:x_4:x_1:x_5] \right). 
\end{align}
\end{lemma}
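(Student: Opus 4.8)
The plan is to read $t_1$ off its defining property and then eliminate the auxiliary fixed point $y_1$. By construction (see (\ref{eq:gluing_map})) the matrix $M(\sqrt{-t_1};x_1,y_1)$ sends $x_2$ to $x_5$, so applying Lemma \ref{lem:two_fixed_pts_with_one_point_behavior} with $t=\sqrt{-t_1}$, $z_1=x_2$, $z_2=x_5$ gives the master relation
\[
-t_1 = [y_1:x_1:x_2:x_5].
\]
Since the inverse matrix $M(\sqrt{-t_1};x_1,y_1)^{-1}=M(\sqrt{-t_1^{-1}};x_1,y_1)$ sends $x_5$ to $x_2$, the same lemma gives $-t_1^{-1}=[y_1:x_1:x_5:x_2]$. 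These two identities are the starting points for the four formulae.

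First I would prove (\ref{eq:t1_from_x1_x2_x3_x5}) and (\ref{eq:t1_from_x1_x2_x4_x5}). Write $y_1=N/D$, taking for (\ref{eq:t1_from_x1_x2_x3_x5}) the expression (\ref{eq:y1_for_x5_and_x4}) coming from $\rho'|_{P_1}$, and for (\ref{eq:t1_from_x1_x2_x4_x5}) the corresponding expression from $\rho'|_{P_2}$ (with eigenvalue parameters $e_1^{-1},e_4,e_5$ and fixed points $x_1,x_4,x_5$). Substituting into the master relation and clearing $D$, the key algebraic observation is that the factor attached to a boundary curve collapses: one checks $x_2D-N=(x_2-x_1)(x_2-x_3)(e_2-e_1e_3)$ in the $P_1$ case, and the analogue $x_5D-N=(x_5-x_1)(x_5-x_4)e_1^{-1}(e_5-e_1^{-1}e_4)$ in the $P_2$ case (with $N,D$ now from $\rho'|_{P_2}$). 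After this cancellation the remaining numerator factors, producing exactly a factor $(1-e_1^2)$, and what is left is the advertised cross ratio; isolating the single numerator term proportional to the denominator yields the constant $-1$ together with the coefficient $\tfrac{e_2(1-e_1^2)}{e_2-e_1e_3}$ (resp.\ $\tfrac{e_4(1-e_1^2)}{e_4-e_1e_5}$).

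The remaining formulae (\ref{eq:t1_from_x1_x2_x3_x4}) and (\ref{eq:t1_from_x1_x3_x4_x5}) I would deduce from the first two, rather than recompute, using the multiplicativity of the cross ratio in its last two entries,
\[
[y_1:x_1:x_2:x_5]=[y_1:x_1:x_2:x_4]\,[y_1:x_1:x_4:x_5].
\]
Here $[y_1:x_1:x_2:x_4]$ is handled exactly as above (replace $x_5$ by $x_4$ throughout, giving the bracket of (\ref{eq:t1_from_x1_x2_x3_x4})), while $[y_1:x_1:x_4:x_5]$ is internal to $P_2$; evaluating it with the two boundary-curve factorizations leaves the pure eigenvalue ratio $[y_1:x_1:x_4:x_5]=\tfrac{e_4-e_1e_5}{e_1(e_1e_4-e_5)}$, which is the prefactor of (\ref{eq:t1_from_x1_x2_x3_x4}). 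Combining with $-t_1=[y_1:x_1:x_2:x_5]$ gives (\ref{eq:t1_from_x1_x2_x3_x4}). The same argument applied to $-t_1^{-1}=[y_1:x_1:x_5:x_2]$, splitting off $x_3$ via $[y_1:x_1:x_5:x_2]=[y_1:x_1:x_5:x_3]\,[y_1:x_1:x_3:x_2]$ and using the internal $P_1$ evaluation $[y_1:x_1:x_2:x_3]=\tfrac{e_1(e_1e_2-e_3)}{e_2-e_1e_3}$ (reciprocated), yields (\ref{eq:t1_from_x1_x3_x4_x5}).

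The main obstacle is bookkeeping in the middle step: verifying the two factorizations and the emergence of the factor $(1-e_1^2)$, and — more treacherously — keeping the orientation conventions straight, since on $P_2$ the curve $\gamma_1'=\gamma_1^{-1}$ carries eigenvalue parameter $e_1^{-1}$ while sharing the fixed point $x_1$ with $\gamma_1$, so every $P_2$-side factorization and internal evaluation is the $P_1$-side one with $e_1$ replaced by $e_1^{-1}$ and then resimplified. Once these conventions are fixed the computations are routine rational-function manipulations, and the multiplicativity of the cross ratio spares me from repeating the heaviest algebra for (\ref{eq:t1_from_x1_x2_x3_x4}) and (\ref{eq:t1_from_x1_x3_x4_x5}).
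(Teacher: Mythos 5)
Your proof is correct: the master relation $-t_1=[y_1:x_1:x_2:x_5]$ is exactly what Lemma \ref{lem:two_fixed_pts_with_one_point_behavior} gives for the gluing matrix (\ref{eq:gluing_map}), the factorizations $x_2D-N=(x_2-x_1)(x_2-x_3)(e_2-e_1e_3)$ and $x_5D'-N'=(x_5-x_1)(x_5-x_4)e_1^{-1}(e_5-e_1^{-1}e_4)$ both check out, the three-term identity then produces the constant $-1$ and the advertised cross ratios, and the internal evaluations $[y_1:x_1:x_4:x_5]=\tfrac{e_4-e_1e_5}{e_1(e_1e_4-e_5)}$ and $[y_1:x_1:x_2:x_3]=\tfrac{e_1(e_1e_2-e_3)}{e_2-e_1e_3}$ are also correct. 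The comparison with the paper is as follows. For (\ref{eq:t1_from_x1_x2_x3_x5}) and (\ref{eq:t1_from_x1_x2_x4_x5}) your argument is the paper's computation reorganized: the paper solves (\ref{eq:x5}) (resp.\ (\ref{eq:x2})) for $t_1$ and applies the identity $(x_1-x_5)(x_2-x_3)+(x_2-x_5)(x_3-x_1)+(x_3-x_5)(x_1-x_2)=0$, and (\ref{eq:x5}) is itself nothing but your master relation with $y_1$ eliminated via (\ref{eq:y1_for_x5_and_x4}), cf.\ (\ref{eq:tentative_for_x_5}). Where you genuinely diverge is in (\ref{eq:t1_from_x1_x2_x3_x4}) and (\ref{eq:t1_from_x1_x3_x4_x5}): the paper obtains these by the same direct substitution from the heavier formulas (\ref{eq:x4}) and (\ref{eq:x3}), whereas your use of the multiplicativity $[y_1:x_1:x_2:x_5]=[y_1:x_1:x_2:x_4]\,[y_1:x_1:x_4:x_5]$ replaces that algebra by the two short internal cross-ratio evaluations, and in doing so explains structurally why the prefactors $-\tfrac{e_1e_5-e_4}{e_1(e_1e_4-e_5)}$ and $-\tfrac{e_1e_3-e_2}{e_1(e_1e_2-e_3)}$ appear: they are (up to inversion) the cross ratios of the four fixed points internal to $P_2$ and $P_1$ respectively. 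The only cosmetic imprecision is writing $M(\sqrt{-t_1};x_1,y_1)^{-1}=M(\sqrt{-t_1^{-1}};x_1,y_1)$, which holds only up to the sign of the square root; this is harmless since Lemma \ref{lem:two_fixed_pts_with_one_point_behavior} determines the parameter only up to sign and only its square enters the cross ratio.
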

\begin{proof}
From (\ref{eq:x5}), we have
\[
\begin{split}
(-({e_2}-{e_1}\*{e_3}){t_1}+{e_1}\*{e_3} ) \*(x_1-x_5)({x_2}-{x_3})  
+{e_1}^{2}\*{e_2}\*(x_2-x_5)({x_3}-{x_1})  \\
+{e_2}\*(x_3-x_5)({x_1} -{x_2}) =0.
\end{split}
\]
Since $(x_1-x_5)({x_2}-{x_3})+(x_2-x_5)({x_3}-{x_1}) = -(x_3-x_5)({x_1} -{x_2})$, we have
\[
\begin{split}
t_1 &= \frac{1}{e_2-e_1e_3} \left( e_1 e_3-e_2 - ({e_1}^2e_2-e_2)[x_5:x_3:x_1:x_2]  \right) \\
&= -1 + \frac{e_2(1-e_1^2)}{e_2-e_1e_3}[x_5:x_3:x_1:x_2].
\end{split}
\]
Similarly (\ref{eq:t1_from_x1_x2_x3_x4}), (\ref{eq:t1_from_x1_x2_x4_x5}), (\ref{eq:t1_from_x1_x3_x4_x5}) follow from (\ref{eq:x4}), (\ref{eq:x3}), (\ref{eq:x2}) respectively.
\end{proof}

\section{Matrix generators}
\label{sec:matrix_generators}
%
%


We assigned eigenvalue parameters $e_i$ and twist parameters $t_i$ for a given $\SLC$- or $\PSLC$-representation satisfying (C1)--(C2).
In this section, we construct a representation from these parameters. 
Actually, we can construct explicit matrix generators of the representation.  

\subsection{}
Let $S=S_{g,b}$ be a surface and fix a pants decomposition $C$ and a dual graph $G$.
We assign a complex number $e_i \in \mathbb{C} \setminus \{0,\pm 1\}$ to each edge of $G$.
We assume that $(e_1,\dots, e_{3g-3+2b})$ is in $E(S,C)$ to ensure that the restriction to each subgroup corresponding to a pair of pants $P \subset S \setminus N(C)$ 
to be irreducible (Proposition \ref{prop:domain}).
We also assign a twist parameter $t_i \in \mathbb{C}^*$ to each interior edge of $G$.
Fix a maximal tree $T$ in $G$, then we obtain a presentation of $\pi_1(S)$ as in \S \ref{subsec:dual_graph}.
We denote the oriented edges of $G \setminus T$ by $u_1,\dots u_g$.
Let $\alpha_i, \beta_i$ and $\delta_i$ be generators of $\pi_1(S)$ defined in \S \ref{subsec:dual_graph}.
We will give matrices $\rho(\alpha_i)$, $\rho(\beta_i)$ and $\rho(\delta_i)$ satisfying the relations given in \S \ref{subsec:dual_graph}.

Let $\widetilde{G}$ be the universal covering of $G$. 
(If $g=0$, $\widetilde{G}$ is $G$ itself.)
Each edge of $\widetilde{G}$ inherits an orientation and eigenvalue and twist parameters from those of $G$.
For each trivalent vertex of $G$, we fix a counterclockwise (with respect to the orientation of $S$) order on the edges adjacent to the vertex.
The order at a trivalent vertex of $G$ lifts to an order of the edges adjacent to a trivalent vertex of $\widetilde{G}$.
(So $G$ and $\widetilde{G}$ are not only graphs but also ``fat graphs''.)
Let $\widetilde{T}$ be a lift of $T$ to $\widetilde{G}$.
Fix a trivalent vertex $p$ of $\widetilde{T}$.
We assign three distinct points of $\mathbb{C}P^1$ to the three edges adjacent to $p$ 
to keep track of the fixed points corresponding to the boundary curves of a pair of pants. 
Using (\ref{eq:x4}) and (\ref{eq:x5}) (or (\ref{eq:x2}) and (\ref{eq:x3})) inductively, we assign elements of $\mathbb{C}P^1$ to all edges of $\widetilde{G}$, 
especially all the edges of $\widetilde{T}$ and the edges adjacent to them. 
We call them the \emph{fixed point parameters} associated to the edges of $\widetilde{G}$.

For each vertex $v$ of $\widetilde{G}$, we assign a representation of the fundamental group of a pair of pants as follows.
Let $(x_1,x_2,x_3)$ be the fixed point parameters of the three edges adjacent to $v$ in the order.
We let $(e_1,e_2,e_3)$ be the eigenvalue parameter or its inverse depending on whether $v$ is an initial or terminal vertex of the edge. 
From $(e_1,e_2,e_3)$ and $(x_1,x_2,x_3)$, we can construct an $\SLC$-representation of the pair of pants uniquely (not up to conjugation) by Proposition \ref{prop:pants_rep}.
Consider two vertices of $\widetilde{G}$ joined by an edge.
There are two representations corresponding to these vertices.
At a common interior pants curve, they have same fixed point but inverse eigenvalues each other.
Recall that $M(e;x,y)= M(e^{-1};x,y)^{-1}$. 
These representations can be combined into a representation of four-holed sphere.
Construct representations for all vertices of $\widetilde{T}$, we obtain a representation $\rho : \pi_1(S_0) \to \SLC$,
where $S_0$ is the surface obtained from $S$ by removing the interior pants curves intersecting $G \setminus T$ (see \S \ref{subsec:dual_graph}).
Thus $\rho$ satisfies the relation
\[
\rho(\alpha_{1}) \rho(\alpha_{g+1}) \dots \rho(\alpha_{g}) \rho(\alpha_{g+g}) \rho(\delta_1) \dots \rho(\delta_b) = I.
\]
(As we remarked in \S \ref{subsec:dual_graph}, the relation may be a permutation of the above one depending to the choice of $G$ and $T$.)

Finally we will construct the matrices $\rho(\beta_i)$.
There are two lifts of $u_i$ to $\widetilde{G}$ which are adjacent to $\widetilde{T}$.
We denote the one which emanates from $\widetilde{T}$ by $\widetilde{u}_{i,+}$ 
and the one which terminates at $\widetilde{T}$ by $\widetilde{u}_{i,-}$ (see Figure \ref{fig:rho_beta}). 
Let $\widetilde{v}_{i,+}$ (resp. $\widetilde{v}_{i,-}$) be the initial vertex of $\widetilde{u}_{i,+}$ (resp. $\widetilde{u}_{i,-}$).
We remark that $\widetilde{v}_{i,+}$ and $\widetilde{v}_{i,-}$ are projected to the same vertex of $G$.
We let $e_i$ and $x_{i,1}$ be the eigenvalue parameter and the fixed point parameter associated to $\widetilde{u}_{i,+}$.
Then we let $(x_{i,1}, x_{i,2}, x_{i,3})$ be the fixed point parameters of the edges adjacent to the vertex $\widetilde{v}_{i,+}$ in counterclockwise order 
(see Figure \ref{fig:rho_beta}). 
Similarly, let $x'_{i,1}$ be the fixed point parameter associated to $\widetilde{u}_{i,-}$ and 
$(x'_{i,1}, x'_{i,2}, x'_{i,3})$ be the fixed point parameters of the edges adjacent to the vertex $\widetilde{v}_{i,+}$ in counterclockwise order.
We remark that the eigenvalue parameter associated to $\widetilde{u}_{i,-}$ is $e_i$ since $\widetilde{u}_{i,+}$ and $\widetilde{u}_{i,-}$ are projected to the same edge of $G$.
We define $\rho(\beta_i)$ by the unique element of $\PSLC$ which sends $(x'_{i,1},x'_{i,2},x'_{i,3})$ to $(x_{i,1},x_{i,2},x_{i,3})$ determined by Lemma \ref{lem:thrice_transitive}.
(We remark that if we define $\widetilde{v}_{i,+}$ (resp. $\widetilde{v}_{i,-}$) by the terminal vertex of $\widetilde{u}_{i,+}$ (resp. $\widetilde{u}_{i,-}$) instead, 
the resulting matrix $\rho(\beta_i)$ does not change.)
We fix a lift of $\rho(\beta_i)$ to $\SLC$ although there is no canonical choice.
Recall that $x_{i,1}$ (resp. $x'_{i,1}$) is one of the fixed points of $\rho(\alpha_i)$ (resp. $\rho(\alpha_{g+i})$) corresponding to the eigenvalue $e_i$ (resp. ${e_i}^{-1}$).  
We denote the other fixed point of $\rho(\alpha_i)$ (resp. $\rho(\alpha_{g+i})$) by $y_{i,1}$ (resp. $y'_{i,1}$).
Since we associated the fixed point parameters to the edges of $\widetilde{T}$ equivariantly, 
we have $\rho(\beta_i) y'_{i,1} = y_{i,1}$.
Thus we have
\[
\begin{split}
\rho(\alpha_{g+i})^{-1} &= M({e_i}^{-1}; x'_{i,1},y'_{i,1}) ^{-1}
= M({e_i}; x'_{i,1},y'_{i,1})  \\
&= \rho(\beta_i)^{-1} M({e_i}; x_{i,1}, y_{i,1}) \rho(\beta_i) 
= \rho(\beta_i)^{-1} \rho(\alpha_i) \rho(\beta_i).
\end{split}
\]
This means that $\rho$ satisfies the relations (\ref{eq:as_HNN_extensions}), thus $\rho$ gives rise to an $\SLC$-representation of $\pi_1(S)$.
Although $\rho$ depends on the choices of the signs of $\rho(\beta_i)$ as an $\SLC$-representation, 
it is uniquely determined as a $\PSLC$-representation by the eigenvalue and twist parameters and 
the three points of $\mathbb{C}P^1$ assigned to the edges adjacent to $p$.

\begin{figure}
\input{rho_beta.pstex_t}
\caption{}
\label{fig:rho_beta}
\end{figure}

If we change the assignment of the first three points of $\mathbb{C}P^1$ to the edges adjacent to $p$, 
the resulting fixed point parameters of $\widetilde{G}$ are obtained by the action of an element of $\PSLC$ from the original one  
since (\ref{eq:x4})--(\ref{eq:x3}) are equivariant with respect to the action of $\PSLC$. 
Thus the resulting representation differs from the original one only by conjugation as $\PSLC$-representations.
As a result, we have the following. 
\begin{theorem}
\label{thm:main_PSLC}
Let $(e_i,t_j) \in E(S,C) \times (\mathbb{C}^*)^{3g-3+b}$.
There exists a $\PSLC$-representation whose eigenvalue parameters and twist parameters coincide with $(e_i,t_j)$ 
up to the action of $(\mathbb{Z}/2\mathbb{Z})^{3g-3+2b}$ and $H_1(G,\partial G; \mathbb{Z}/2\mathbb{Z})$.
This gives a rational map
\[
E(S,C) \times (\mathbb{C}^*)^{3g-3+b} \to X'_{PSL}(S,C)
\]
where $E(S,C)$ is the subset of $\mathbb{C}^{3g-3+2b}$ defined in (\ref{eq:irred_subset_for_pants}).
$H_1(G, \partial G; \mathbb{Z}/2\mathbb{Z})$ acts on $E(S,C)$ and the above map induces
\[
(E(S,C) / H_1(G, \partial G;\mathbb{Z}/2\mathbb{Z}) ) \times (\mathbb{C}^*)^{3g-3+b} \to X'_{PSL}(S,C).
\]
Furthermore $(\mathbb{Z}/2\mathbb{Z})^{3g-3+2b}$ acts on the left hand side and induces a bijective map 
\begin{equation}
\label{eq:inverse_of_paramtrization_of_PSL2C_characters}
( (E(S,C) / H_1(G, \partial G;\mathbb{Z}/2\mathbb{Z}) ) \times (\mathbb{C}^*)^{3g-3+b} )/(\mathbb{Z}/2\mathbb{Z})^{3g-3+2b} \to X'_{PSL}(S,C).
\end{equation}
\end{theorem}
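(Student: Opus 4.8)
The plan is to prove the theorem by exhibiting the map \eqref{eq:inverse_of_paramtrization_of_PSL2C_characters} together with an explicit inverse, and checking that the two are mutually inverse after passing to the quotients. The forward direction is essentially the construction carried out in the paragraphs preceding the statement: from $(e_i,t_j)$, a maximal tree $T$, and a choice of three distinct points of $\mathbb{C}P^1$ attached to the edges adjacent to the base vertex $p$, one propagates fixed point parameters over all of $\widetilde{G}$ using \eqref{eq:x4}--\eqref{eq:x3}, assembles a pants representation at each vertex via Proposition \ref{prop:pants_rep}, and defines $\rho(\beta_i)$ by Lemma \ref{lem:thrice_transitive}. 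First I would record that the resulting $\rho$ lies in $R'_{PSL}(S,C)$: conditions (C1)--(C2) hold because $(e_i)\in E(S,C)$ forces each pants restriction to be irreducible with two fixed points (Proposition \ref{prop:domain}), and $\rho$ is liftable since, as shown in the construction above, it is an honest $\SLC$-representation. Because all formulae are rational in the parameters and altering the three initial points at $p$ changes the entire fixed point assignment by a single element of $\PSLC$ (the equivariance of \eqref{eq:x4}--\eqref{eq:x3}), the conjugacy class $[\rho]\in X'_{PSL}(S,C)$ is a well-defined rational function of $(e_i,t_j)$.

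Next I would verify that this rational map descends through the two quotients. For $H_1(G,\partial G;\mathbb{Z}/2\mathbb{Z})\cong\epsilon(S,C)$ (Lemma \ref{lem:epsilon(S,C)_as_homology}), the action only flips signs of the $e_i$; by the discussion in \S\ref{subsec:eigen_param_for_SL2C} this produces a different $\SLC$-lift of the same $\PSLC$-representation, and since the fixed points, and hence by Lemma \ref{lem:twist_parameter_from_fixed_points} the twist parameters, are unchanged, the class in $X'_{PSL}(S,C)$ is unaffected. For $(\mathbb{Z}/2\mathbb{Z})^{3g-3+2b}$, the generator at a curve sends $e_i\mapsto e_i^{-1}$, which amounts to tracking the other fixed point $y_i$ in place of $x_i$ (note $M(e;x,y)=M(e^{-1};y,x)$ from \eqref{eq:two_fixed_points_case}); the accompanying transformation of the twist parameters recorded in Section \ref{sec:action} is defined precisely so that the reconstructed representation is unchanged up to conjugacy, and hence the map factors through the full quotient on the left of \eqref{eq:inverse_of_paramtrization_of_PSL2C_characters}.

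For bijectivity I would construct the inverse explicitly. Given $[\rho]\in X'_{PSL}(S,C)$, choose an $\SLC$-lift (possible since $\rho$ is liftable), read off an eigenvalue $e_i$ for each pants curve together with the fixed points $x_i$ associated to the edges of $G$, and define the twist parameters by the gluing condition of \S\ref{subsec:definition_of_twist_parameters}; Proposition \ref{prop:domain} places $(e_i)$ in $E(S,C)$ and the $t_j$ in $(\mathbb{C}^*)^{3g-3+b}$. The choice of $\SLC$-lift is exactly the $\epsilon(S,C)=H_1(G,\partial G;\mathbb{Z}/2\mathbb{Z})$ ambiguity and the choice of which eigenvalue to call $e_i$ is exactly the $(\mathbb{Z}/2\mathbb{Z})^{3g-3+2b}$ ambiguity, so this assignment is well defined into the quotient parameter space. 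That the two maps are mutually inverse then reduces to two checks: that applying the construction to the read-off parameters recovers $\rho$ up to conjugacy, which follows because Proposition \ref{prop:pants_rep} reconstructs each pants restriction from $(e_i,x_i)$ and the twist condition reglues them correctly; and that reading off the parameters of a constructed $\rho$ returns the input, which follows from Lemma \ref{lem:twist_parameter_from_fixed_points}.

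The main obstacle I expect is verifying the internal consistency of the inductive fixed point assignment around the non-tree edges, i.e. that the values propagated over $\widetilde{G}$ by \eqref{eq:x4}--\eqref{eq:x3} are equivariant under the deck transformations of $\widetilde{G}\to G$. This equivariance is what guarantees simultaneously that $\rho(\beta_i)$ carries $(x'_{i,1},x'_{i,2},x'_{i,3})$ to $(x_{i,1},x_{i,2},x_{i,3})$ and the matching repelling fixed points $y'_{i,1}\mapsto y_{i,1}$, so that the HNN relations $\rho(\alpha_{g+i})^{-1}=\rho(\beta_i)^{-1}\rho(\alpha_i)\rho(\beta_i)$ hold and $\rho$ genuinely descends from $\pi_1(S_0)$ to $\pi_1(S)$. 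It also underlies injectivity: once the eigenvalue choices of two parameter tuples are normalized to agree via the group actions, the conjugation matching their pants pieces is forced to match the whole fixed point configuration, so by Lemma \ref{lem:twist_parameter_from_fixed_points} the twist parameters agree as well.
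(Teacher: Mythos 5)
Your proposal is correct and follows essentially the same route as the paper: the paper's proof consists of the single observation that the map (\ref{eq:inverse_of_paramtrization_of_PSL2C_characters}) constructed in Section \ref{sec:matrix_generators} is inverse to the map (\ref{eq:paramtrization_of_PSL2C_characters}) built from the eigenvalue and twist parameters, with all the substance (well-definedness on the quotients, equivariance of the fixed point assignment, the identification of the lift ambiguity with $H_1(G,\partial G;\mathbb{Z}/2\mathbb{Z})$ and the eigenvalue ambiguity with $(\mathbb{Z}/2\mathbb{Z})^{3g-3+2b}$) delegated to the preceding construction exactly as you spell it out. The consistency point you single out at the end is likewise the one the paper relies on when it notes that the fixed point parameters are assigned to $\widetilde{G}$ equivariantly, forcing $\rho(\beta_i)y'_{i,1}=y_{i,1}$ and hence the HNN relations.
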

\begin{proof}
The last statement follows from the fact that the map (\ref{eq:inverse_of_paramtrization_of_PSL2C_characters}) gives 
the inverse of the map (\ref{eq:paramtrization_of_PSL2C_characters}).
\end{proof}

We can also construct a rational map to $X''_{PSL}(S,C)$, but in this case the construction is more complicated since there is no natural choice of the signs of 
the eigenvalue parameters.

For a point of $E(S,C) \times (\mathbb{C}^*)^{3g-3+b}$, 
we have constructed a homomorphism $\pi_1(S_0) \to \SLC$ and $\rho(\beta_i) \in \PSLC$ which give a $\PSLC$-representation of $\pi_1(S)$.
If we fix a lift $\widetilde{\rho}(\beta_i) \in \SLC$ of $\rho(\beta_i)$, we obtain an $\SLC$-representation of $\pi_1(S)$.
Since any other lift of $\rho(\beta_i)$ is written as $\epsilon_i \widetilde{\rho}(\beta_i) \in \SLC$ for some $\epsilon_i = \pm 1$ and  
$\{\epsilon_i\}_{i=1,\dots,g}$ can be regarded as an element of $\Hom(H_1(G), \mathbb{Z}/2\mathbb{Z})$,  
we can construct a covering map $Y \to E(S,C) \times (\mathbb{C}^*)^{3g-3+b}$ with covering group $H^1(G;\mathbb{Z}/2\mathbb{Z}) \cong \Hom(H_1(G), \mathbb{Z}/2\mathbb{Z})$ 
satisfying the following commutative diagram:
\[
\xymatrix{
Y \ar[r] \ar[d]^{H^1(G;\mathbb{Z}/2\mathbb{Z})} & R_{SL}(S,C) \ar[r] \ar[d] & X_{SL}(S,C) \ar[d] \\
E(S,C) \times (\mathbb{C}^*)^{3g-3+b} \ar[r] & R'_{PSL}(S,C) \ar[r] & X'_{PSL}(S,C) \\
}
\]
As a result, we have the following theorem.
\begin{theorem}
\label{thm:main_SLC}
There exists a covering map 
\[
Y \to E(S,C) \times (\mathbb{C}^*)^{3g-3+b} 
\]
with covering group $H^1(G; \mathbb{Z}/2\mathbb{Z})$ such that there is a map $Y \to X_{SL}(S,C)$ which induces a bijection
\[
Y / (\mathbb{Z}/2\mathbb{Z})^{3g-3+2b} \to X_{SL}(S,C).
\]
\end{theorem}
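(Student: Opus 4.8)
The plan is to exhibit an explicit inverse to the reconstruction map $Y \to X_{SL}(S,C)$ built just above the theorem, closely paralleling the proof of Theorem \ref{thm:main_PSLC}. First I would record how the reconstruction behaves on the quotient. The group $(\mathbb{Z}/2\mathbb{Z})^{3g-3+2b}$ of eigenvalue flips acts on $E(S,C) \times (\mathbb{C}^*)^{3g-3+b}$ by $e_i \mapsto e_i^{-1}$, together with the induced change of the twist parameters recorded in Section \ref{sec:action}, and this action lifts to the cover $Y$ because flipping $e_i$ merely interchanges the two fixed points $x_i \leftrightarrow y_i$ attached to the $i$-th pants curve (note $M(e;x,y)=M(e^{-1};y,x)$). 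I would then check that the $\SLC$-representations built from $(e_i,t_j,\{\text{signs of }\rho(\beta_i)\})$ and from its flip are conjugate; this is the descent step, and it shows that the reconstruction factors through $Y/(\mathbb{Z}/2\mathbb{Z})^{3g-3+2b}$.

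Next I would construct the coordinate map $\Psi : X_{SL}(S,C) \to Y/(\mathbb{Z}/2\mathbb{Z})^{3g-3+2b}$ in the opposite direction. Given $[\rho]\in X_{SL}(S,C)$, condition (C1) guarantees that each $\rho(c_i)$ and each $\rho(b_i)$ is diagonalizable with a genuine pair of eigenvalues $\{e_i,e_i^{-1}\}\subset\mathbb{C}\setminus\{0,\pm1\}$; unlike the $\PSLC$ case these eigenvalues carry honest signs, so the only ambiguity in recording $(e_i)$ is which eigenvalue is called $e_i$, which is precisely the $(\mathbb{Z}/2\mathbb{Z})^{3g-3+2b}$ action. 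This is exactly why on the $\SLC$ side one does not quotient by $H_1(G,\partial G;\mathbb{Z}/2\mathbb{Z})$, in contrast with the $\PSLC$ side. The twist parameters $t_j$ are read off from the fixed points by Lemma \ref{lem:twist_parameter_from_fixed_points}, and they depend only on the conjugacy class once the $e_i$ are fixed. Finally the signs of $\rho(\beta_i)$ are determined canonically by comparing $\rho(\beta_i)$ with the $\PSLC$-element produced by Lemma \ref{lem:thrice_transitive} during reconstruction; this is exactly the $H^1(G;\mathbb{Z}/2\mathbb{Z})$-worth of data recorded by the fiber of $Y$. Hence $\Psi$ is well defined.

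I would then verify that $\Psi$ and the descended reconstruction map are mutually inverse. One composition is the statement that an $\SLC$-representation satisfying (C1)--(C2) is determined, as a point of $R_{SL}(S,C)$ up to conjugation, by its eigenvalue parameters, twist parameters and $\beta$-signs; this follows from Proposition \ref{prop:pants_rep} on each pair of pants, from the gluing formulae (\ref{eq:x4})--(\ref{eq:x3}) across each interior pants curve, and from the fact that the fixed-point parameters were propagated equivariantly over $\widetilde{G}$ so that the HNN relations of \S\ref{subsec:dual_graph} hold. The other composition is immediate from the definitions. Surjectivity and injectivity of the map $Y/(\mathbb{Z}/2\mathbb{Z})^{3g-3+2b}\to X_{SL}(S,C)$ are then formal consequences, and its agreement with the commutative diagram preceding the theorem is automatic.

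The step I expect to be the main obstacle is the sign bookkeeping underlying both the descent and the well-definedness of the $\beta$-sign coordinate: one must check that the per-pants-curve signs carried by the honest $\SLC$ eigenvalues, the $\beta$-signs parametrized by the covering group $H^1(G;\mathbb{Z}/2\mathbb{Z})$, and the eigenvalue-flip group $(\mathbb{Z}/2\mathbb{Z})^{3g-3+2b}$ together account for exactly the $H^1(S;\mathbb{Z}/2\mathbb{Z})$-torsor of lifts of a given $\PSLC$-representation, with neither redundancy nor deficiency. Concretely this rests on the identity $2g+b-1 = g + (g+b-1)$ relating $\dim H^1(S;\mathbb{Z}/2\mathbb{Z})$ to $\dim H^1(G;\mathbb{Z}/2\mathbb{Z})$ and $\dim H_1(G,\partial G;\mathbb{Z}/2\mathbb{Z})$ (Lemma \ref{lem:epsilon(S,C)_as_homology}), but it must be upgraded from a dimension count to a compatible statement at the level of the group actions, and reconciled with the passage from $X'_{PSL}(S,C)$ to $X_{SL}(S,C)$ along the fibers of the commutative diagram.
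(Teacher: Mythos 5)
Your proposal follows essentially the same route as the paper's proof: both construct the inverse map $X_{SL}(S,C)\to Y/(\mathbb{Z}/2\mathbb{Z})^{3g-3+2b}$ by reading off the eigenvalue parameters (well defined up to the flip action), the twist parameters, and the signs of $\rho(\beta_i)$ as the fiber coordinate of $Y$, and then invoke the reconstruction of Section \ref{sec:matrix_generators} for the reverse direction. Your version is more explicit about the descent of the reconstruction map through the quotient and about the sign bookkeeping, which the paper leaves implicit, but the argument is the same.
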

\begin{proof}
We construct an inverse of $Y / (\mathbb{Z}/2\mathbb{Z})^{3g-3+2b} \to X_{SL}(S,C)$ as follows.
For an element $[\rho]$ of $X_{SL}(S,C)$, fix a representative $\rho$ in $R_{SL}(S,C)$.
Choose one of the eigenvalues of $\rho(c)$ for each pants curve $c$.
Then the twist parameters $\{t_i\}$ are uniquely determined.
By the signs of $\rho(\beta_i)$, we determine a point of $Y$.
If we choose other eigenvalue parameters, they are related by the action of $(\mathbb{Z}/2\mathbb{Z})^{3g-3+2b}$. 
Therefore the map $X_{SL}(S,C) \to Y / (\mathbb{Z}/2\mathbb{Z})^{3g-3+2b}$ is well-defined.
Conversely we can reconstruct an $\SLC$-representation from these parameters as discussed before.
\end{proof}

Our results are summarized in the following diagram:
\[
\xymatrix{
Y \ar[rr]^{(\mathbb{Z}/2\mathbb{Z})^{3g-3+2b}}  \ar[d]^{H^1(G;\mathbb{Z}/2\mathbb{Z})} & & X_{SL}(S,C) \ar[dd]^{H^1(S;\mathbb{Z}/2\mathbb{Z})} \ar@{^{(}->}[r] & X_{SL}(S) \ar[dd] \\
E(S,C) \times (\mathbb{C}^*)^{3g-3+b} \ar[d]^{H_1(G, \partial G;\mathbb{Z}/2\mathbb{Z})} & &  & \\
(E(S,C)/H_1(G, \partial G;\mathbb{Z}/2\mathbb{Z}) ) \times (\mathbb{C}^*)^{3g-3+b} \ar[rr]^{\textrm{\hspace{65pt}}(\mathbb{Z}/2\mathbb{Z})^{3g-3+2b}} 
& &  X'_{PSL}(S,C) \ar@{^{(}->}[r] & X_{PSL}(S) \\
}
\]
We remark that 
\[
X_{SL}(S,C) \to X'_{PSL}(S,C) 
\]
and
\[
(E(S,C)/H_1(G,\partial G;\mathbb{Z}/2\mathbb{Z}) )\times (\mathbb{C}^*)^{3g-3+b} \to X'_{PSL}(S,C) 
\]
are not covering maps in general but just quotient maps by the action of $H^1(S;\mathbb{Z}/2\mathbb{Z})$ and $(\mathbb{Z}/2\mathbb{Z})^{3g-3+2b}$ respectively.
The first one was remarked in \cite{morgan-shalen} and the second one is because $-e_i = 1/e_i$ if $e_i = \pm \sqrt{-1}$.
The above diagram induces the following diagram:
\[
\xymatrix{
Y/(\mathbb{Z}/2\mathbb{Z})^{3g-3+2b} \ar[r]^{\cong}  \ar[d]^{H^1(G;\mathbb{Z}/2\mathbb{Z})} & X_{SL}(S,C) \ar[dd]^{H^1(S;\mathbb{Z}/2\mathbb{Z})} \ar@{^{(}->}[r] & X_{SL}(S) \ar[dd] \\
(E(S,C) \times (\mathbb{C}^*)^{3g-3+b})/(\mathbb{Z}/2\mathbb{Z})^{3g-3+2b} \ar[d]^{H_1(G,\partial G;\mathbb{Z}/2\mathbb{Z})} &  & \\
( (E(S,C)/H_1(G,\partial G;\mathbb{Z}/2\mathbb{Z}) ) \times (\mathbb{C}^*)^{3g-3+b} )/(\mathbb{Z}/2\mathbb{Z})^{3g-3+2b}
 \ar[r]^{\textrm{\hspace{90pt}}\cong} &  X'_{PSL}(S,C) \ar@{^{(}->}[r] & X_{PSL}(S) \\
}
\]
Again we remark that  
\[
\begin{split}
(E(S,C) \times &(\mathbb{C}^*)^{3g-3+b})/(\mathbb{Z}/2\mathbb{Z})^{3g-3+2b} \\
&\to ( (E(S,C)/H_1(G,\partial G;\mathbb{Z}/2\mathbb{Z}) ) \times (\mathbb{C}^*)^{3g-3+b} )/(\mathbb{Z}/2\mathbb{Z})^{3g-3+2b}
\end{split}
\]
is just a quotient map by the action of $H^1(G, \partial G;\mathbb{Z}/2\mathbb{Z})$.

\section{Examples}
\label{sec:example}
%
%


In this section, we apply the construction of \S \ref{sec:matrix_generators} to $S_{0,4}$, $S_{1,1}$ and $S_{2,0}$.
The first two examples play an important role in the transformation formulae in \S \ref{sec:transformation}. 

\subsection{Four-holed sphere}
\label{subsec:four_holed_sphere}
Take a pants decomposition $C$ and a dual graph $G$ of $S_{0,4}$ as in Figure \ref{fig:twist_parameter}. 
Then $G$ itself is a maximal tree and $\widetilde{G} = G$.
We assign a pair of eigenvalue and twist parameters $(e_1,t_1)$ to the interior edge of $G$,  
and eigenvalue parameters $e_2, e_3, e_4, e_5$ to the boundary edges of $G$.
Let $x_i$ be the fixed point parameter of the edge corresponding to $e_i$.
We assume that the fixed point parameters corresponding to the lower trivalent vertex are $(x_1,x_2,x_3) = (\infty, 1, 0)$.
We apply (\ref{eq:x4}) and (\ref{eq:x5}) for $(x_1,x_2,x_3) = (\infty, 1, 0)$ and $(e_1,e_2,e_3,e_4,e_5) = (e_1,e_2,e_3,e_4, e_5)$.
Then we have 
\[
\begin{split}
x_4 &= \frac{{e_1}\*({e_1}\*{e_3}-{e_2})\*({e_1}\*{e_4}-{e_5})\*{t_1}+{e_1}\*({e_1}\*{e_2}-{e_3})\*({e_1}\*{e_5}-{e_4})}{({e_1}^{2}-1)\*{e_2}\*({e_1}\*{e_5}-{e_4})}, \\
x_5 &= \frac{-({e_1}\*{e_3}-{e_2})\*{t_1}+{e_1}\*({e_1}\*{e_2}-{e_3})}{({e_1}^{2}-1)\*{e_2}}.
\end{split}
\]
Applying (\ref{eq:pants_representation}) for $(e_1,e_2,e_3)$ and $(\infty,1,0)$, we have
\begin{equation}
\label{eq:gamma_1_2_3_for_four_holed}
\begin{split}
\rho(\gamma_1) = \begin{pmatrix} e_1 & \frac{e_3}{e_2}-{e_1} \\ 0 & \frac{1}{e_1} \end{pmatrix},& \quad 
\rho(\gamma_2) = \begin{pmatrix} -\frac{e_1}{e_3}+{e_2}+\frac{1}{e_2} & \frac{e_1}{e_3}-\frac{1}{e_2} \\
{e_2}-\frac{e_1}{e_3} & \frac{e_1}{e_3} \\ \end{pmatrix}, \\
\rho(\gamma_3) =& \begin{pmatrix} \frac{1}{e_3} & 0 \\ \frac{1}{e_3}-\frac{e_2}{e_1} & {e_3} \end{pmatrix}.
\end{split}
\end{equation}
Applying  (\ref{eq:pants_representation}) for $({e_1}^{-1},e_4,e_5)$ and $(\infty, x_4, x_5)$, we have
\begin{equation}
\label{eq:gamma_4_for_four_holed}
\begin{split}
\rho(\gamma_4) = &\begin{pmatrix} a_{11} & a_{12} \\ a_{21} & a_{22} \end{pmatrix}, \\
a_{11} =& 
\frac{{e_1}({e_4}+{e_4}^{-1})}
{{e_1}-{e_1}^{-1}}
-\frac
{({e_5}+{e_5}^{-1})}
{{e_1}-{e_1}^{-1}}
-\frac
{(1-{e_1}{e_4}{e_5})({e_1}{e_5}-{e_4})({e_1}{e_2}-{e_3})}
{({e_1}^2-1)({e_1}{e_3}-{e_2}){e_4}{e_5}{t_1}}, \\
a_{12} =& \frac{{e_1}}{({e_1}^2-1)^{2}{e_2}({e_1}{e_3}-{e_2}){e_4}{e_5}{t_1}} \\
& \times (({e_1}{e_3}-{e_2})({e_1}{e_4}-{e_5}){t_1}+({e_1}{e_5}-{e_4})({e_1}{e_2}-{e_3})) \\
& \times (({e_1}{e_3}-{e_2})({e_4}{e_5}-{e_1}){t_1}+({e_1}{e_2}-{e_3})(1-{e_1}{e_4}{e_5})), \\
a_{21} =&
\frac{{e_2}({e_1}{e_5}-{e_4})({e_1}{e_4}{e_5}-1)}
{{e_1}({e_1}{e_3}-{e_2}){e_4}{e_5}{t_1}}, \\
a_{22} =& 
-\frac{{e_4}-{e_4}^{-1}}{{e_1}({e_1}-{e_1}^{-1})}
+\frac{{e_5}+{e_5}^{-1}}{{e_1}-{e_1}^{-1}}
+\frac{
(1-{e_1}{e_4}{e_5})({e_1}{e_5}-{e_4})({e_1}{e_2}-{e_3})}
{({e_1}^2-1)({e_1}{e_3}-{e_2}){e_4}{e_5}{t_1}}.
\end{split}
\end{equation}
We omit the matrix $\rho(\gamma_5)$ since it is equal to $(\rho(\gamma_2) \rho(\gamma_3) \rho(\gamma_4))^{-1}$.
We compute some traces for later use.
\begin{equation}
\label{eq:four_holed_tr_1}
\begin{split}
\tr(\rho(\gamma_3 \gamma_4)& ) = \frac{1}{({e_1}-{e_1}^{-1})^{2}} \biggl( 
-\frac{(e_2e_3-e_1)(e_1e_3-e_2)}{e_1 e_2 e_3}\frac{(e_4e_5-e_1)(e_1e_4-e_5)}{e_1 e_4 e_5} \*{t_1} \\
&-\frac{(1-e_1e_2e_3)(e_1e_2-e_3)}{e_1 e_2 e_3}\frac{(1-e_1e_4e_5)(e_1e_5-e_4)}{e_1 e_4 e_5} \*\frac{1}{t_1} \\
&+({e_1}+{e_1}^{-1})\*\bigl(({e_3}+{e_3}^{-1})({e_5}+{e_5}^{-1})+({e_2}+{e_2}^{-1})({e_4}+{e_4}^{-1}) \bigr) \\
&-2\bigl( ({e_2}+{e_2}^{-1})({e_5}+{e_5}^{-1})+({e_3}+{e_3}^{-1})({e_4}+{e_4}^{-1}) \bigr) \biggr). \\
\end{split}
\end{equation}
\begin{equation}
\label{eq:four_holed_tr_2}
\begin{split}
\tr(\rho(\gamma_2 \gamma_4)& ) = \frac{1}{({e_1}-{e_1}^{-1})^{2}} 
\biggl(
\frac{(e_2e_3-e_1)(e_1e_3-e_2)}{e_1 e_2 e_3}\frac{(e_4e_5-e_1)(e_1e_4-e_5)}{e_1 e_4 e_5} \*e_1 {t_1} \\
&+\frac{(1-e_1e_2e_3)(e_1e_2-e_3)}{e_1 e_2 e_3}\frac{(1-e_1e_4e_5)(e_1e_5-e_4)}{e_1 e_4 e_5} \* \frac{1}{e_1 t_1} \\
&+({e_1}+{e_1}^{-1})\bigl(
({e_2}+{e_2}^{-1})({e_5}+{e_5}^{-1})
+ ({e_3}+{e_3}^{-1})({e_4}+{e_4}^{-1}) \bigr)\\
&-2\bigl(
({e_2}+{e_2}^{-1})({e_4}+{e_4}^{-1})
+({e_3}+{e_3}^{-1})({e_5}+{e_5}^{-1})
\bigr) \biggr). \\
\end{split}
\end{equation}
\begin{equation}
\label{eq:four_holed_tr_3}
\begin{split}
\tr(\rho(\gamma_3 \gamma_5)& ) = \frac{1}{({e_1}-{e_1}^{-1})^{2}} \biggl(
\frac{(e_2e_3-e_1)(e_1e_3-e_2)}{e_1 e_2 e_3}\frac{(e_4e_5-e_1)(e_1e_4-e_5)}{e_1 e_4 e_5} \*\frac{t_1}{e_1} \\
&+\frac{(1-e_1e_2e_3)(e_1e_2-e_3)}{e_1 e_2 e_3}\frac{(1-e_1e_4e_5)(e_1e_5-e_4)}{e_1 e_4 e_5} \*\frac{e_1}{t_1} \\
&+({e_1}+{e_1}^{-1}) \bigl( (e_2+e_2^{-1})(e_5+e_5^{-1})+(e_3+e_3^{-1})(e_4+e_4^{-1}) \bigr) \\
&-2 \bigl( (e_2+e_2^{-1})(e_4+e_4^{-1})+(e_3+e_3^{-1})(e_5+e_5^{-1}) \bigr) \biggr). \\
\end{split}
\end{equation}
We remark that $\tr(\gamma_2 \gamma_4)$ is obtained from $\tr(\gamma_3 \gamma_5)$ by Proposition \ref{prop:right_Dehn_twist}
since $\gamma_2 \gamma_4 =  \gamma_1^{-1} \gamma_3^{-1} \gamma_1 \gamma_5^{-1} \sim \gamma_3^{-1} \gamma_1 \gamma_5^{-1} \gamma_1^{-1}$ 
and thus $\gamma_2 \gamma_4$ is obtained from $\gamma_3^{-1} \gamma_5^{-1}$ by the right handed Dehn twist along $\gamma_1$.
From (\ref{eq:four_holed_tr_2}) and (\ref{eq:four_holed_tr_3}), we have
\[
\begin{split}
&({e_1}-{e_1}^{-1})^{2} \bigl( e_1 \tr(\rho(\gamma_2 \gamma_4) - {e_1}^{-1} \tr(\rho(\gamma_3 \gamma_5) \bigr)\\
&=\frac{(e_2e_3-e_1)(e_1e_3-e_2)}{e_1 e_2 e_3}\frac{(e_4e_5-e_1)(e_1e_4-e_5)}{e_1 e_4 e_5} ({e_1}^2-\frac{1}{{e_1}^2}) {t_1} \\
&+({e_1}-{e_1}^{-1}) ({e_1}+{e_1}^{-1}) \bigl( (e_2+e_2^{-1})(e_5+e_5^{-1})+(e_3+e_3^{-1})(e_4+e_4^{-1}) \bigr) \\
&-2 ({e_1}-{e_1}^{-1}) \bigl( (e_2+e_2^{-1})(e_4+e_4^{-1})+(e_3+e_3^{-1})(e_5+e_5^{-1}) \bigr). \\
\end{split}
\]
Thus we have 
\begin{equation}
\label{eq:twist_from_equation_0_4}
\begin{split}
t_1 &= \frac{1}{{e_1}+{e_1}^{-1}} \frac{e_1 e_2 e_3}{(e_2e_3-e_1)(e_1e_3-e_2)}\frac{e_1 e_4 e_5}{(e_4e_5-e_1)(e_1e_4-e_5)} \\
&\biggl( ({e_1}-{e_1}^{-1}) \bigl(  e_1 \tr(\rho(\gamma_2 \gamma_4) - {e_1}^{-1} \tr(\rho(\gamma_3 \gamma_5) \bigr)\\
&-({e_1}+{e_1}^{-1}) \bigl( (e_2+e_2^{-1})(e_5+e_5^{-1})+(e_3+e_3^{-1})(e_4+e_4^{-1}) \bigr) \\
&+2 \bigl( (e_2+e_2^{-1})(e_4+e_4^{-1})+(e_3+e_3^{-1})(e_5+e_5^{-1}) \bigr) \biggr). \\
\end{split}
\end{equation}
This means that the twist parameter can be computed from traces and eigenvalue parameters.

Rewriting (\ref{eq:four_holed_tr_1}), we have  
\[
\begin{split}
\tr(\rho(\gamma_3 \gamma_4)& ) = \frac{1}{({e_1}-{e_1}^{-1})^{2}} \biggl(
-\biggl(
(\frac{e_1}{e_2}+\frac{e_2}{e_1}) - ({e_3+e_3^{-1}})
\biggr)
\biggl(
(\frac{e_1}{e_5}+\frac{e_5}{e_1}) - ({e_4+e_4^{-1}})
\biggr) \*{t_1} \\
&-\biggl(
 (e_1 e_2+\frac{1}{e_1 e_2}) - ({e_3}+{e_3}^{-1}) 
\biggr)
\biggl(
 (e_1 e_5+\frac{1}{e_1 e_5}) - ({e_4}+{e_4}^{-1}) 
\biggr)\*\frac{1}{t_1} \\
&+({e_1}+{e_1}^{-1})\*\biggl(({e_3}+{e_3}^{-1})({e_5}+{e_5}^{-1})+({e_2}+{e_2}^{-1})({e_4}+{e_4}^{-1}) \biggr) \\
&-2\biggl( ({e_2}+{e_2}^{-1})({e_5}+{e_5}^{-1})+({e_3}+{e_3}^{-1})({e_4}+{e_4}^{-1}) \biggr) \biggr). \\
\end{split}
\]
We can observe that $\tr(\rho(\gamma_3 \gamma_4))$ is invariant under ${e_3} \leftrightarrow {e_3}^{-1}$
and ${e_4} \leftrightarrow {e_4}^{-1}$.

\subsection{One-holed torus}
\label{subsec:once_punctured}
The surface $S_{1,1}$ decomposes into one pair of pants. 
We take a dual graph $G$ and give the eigenvalue parameters $e_1, e_2,$ and the twist parameter $t_1$ as in Figure \ref{fig:once_punctured}.
There is a unique maximal tree $T$ consisting of the boundary edge. 
The associated presentation of $\pi_1(S_{1,1})$ is given by
\[
\begin{split}
\langle & \alpha_1, \alpha_2, \beta_1, \delta_1 \mid \alpha_1 \delta_1 \alpha_2 = 1, \quad  \alpha_2^{-1} = \beta_1^{-1} \alpha_1 \beta_1\rangle \\
&= \langle \alpha_1, \beta_1, \delta_1 \mid  [\beta_1^{-1}, \alpha_1^{-1}] = \delta_1^{-1} \rangle
\end{split}
\]
(see Figure \ref{fig:once_punctured}).
Then we define fixed point parameters $x_1, \dots, x_5$ of some edges of $\widetilde{G}$ as in the right of Figure \ref{fig:once_punctured}.
We let $(x_1,x_2,x_3) = (\infty, 0, 1)$.
Applying (\ref{eq:pants_representation}) for $(e_1,e_2,e_1^{-1})$ and $(\infty, 0, 1)$, we have 
\[
\begin{split}
\rho(\alpha_1)  &= \begin{pmatrix} e_1 & e_1^{-1}-e_1^{-1}e_2^{-1} \\ 0 &e_1^{-1} \end{pmatrix}, \quad 
\rho(\delta_1)  = \begin{pmatrix} e_2^{-1} & 0 \\ e_1^{2}-e_2  & e_2 \end{pmatrix}, \\
\rho(\alpha_2)  &= \begin{pmatrix} e_1^{-1}e_2 & e_1^{-1}-e_1^{-1}e_2 \\  e_1^{-1}e_2 - e_1 & e_1 + e_1^{-1} - e_1^{-1}e_2 \end{pmatrix}. \\
\end{split}
\]
\begin{figure}
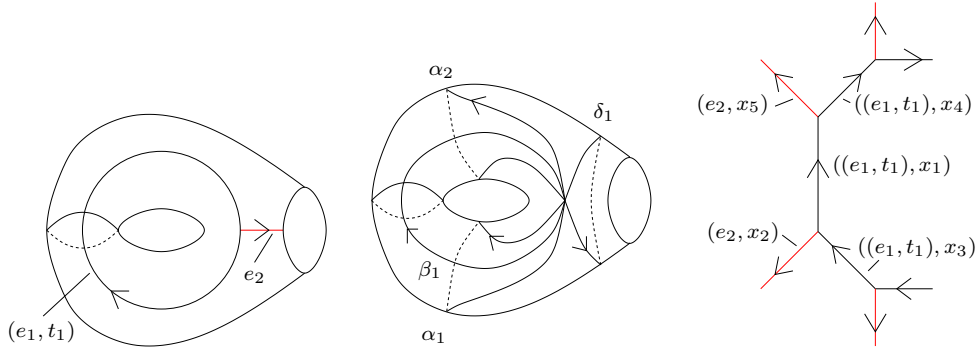

\input{one_holed_with_mx_tr_a.pstex_t}
\hspace{10pt}
\input{gens_one_holed_a.pstex_t}
\hspace{10pt}
\input{tr_one_holed_a.pstex_t}
\caption{The left is a dual graph $G$ and a maximal tree $T$ (colored red). The center indicates its associated generators of $\pi_1(S_{1,1})$.
The right is $\widetilde{G}$ and there $x_i$ are fixed point parameters.
(We put $\widetilde{G}$ on the plane to be compatible with the order at each vertex.)}
\label{fig:once_punctured}
\end{figure}
Applying (\ref{eq:x4}) and (\ref{eq:x5}) to $(e_1,e_2,e_3,e_4,e_5) = (e_1,e_2, e_1^{-1},e_1,e_2)$ and $(x_1,x_2,x_3) = (\infty, 0, 1)$, 
then we have 
\[
\begin{split}
x_4 &= \frac{e_1^2-e_2}{e_2(e_1^2-1)} t_1 + \frac{1-e_2}{e_2(e_1^2-1)}, \\
x_5 &= \frac{(t_1+1)(1-e_2)}{e_2(e_1^2-1)}. \\
\end{split}
\]
Because $\rho(\beta_1)$ is the matrix which sends $(\infty, 0, 1)$ to $(x_4,x_5,\infty)$, we have
\begin{equation}
\label{eq:rho_beta_1_for_one_holed}
\begin{split}
\rho(\beta_1) &= \frac{1}{\sqrt{-e_2 t_1}(e_1^2-1)}
\begin{pmatrix}
(e_2-e_1^2)t_1+ (e_2-1) & (t_1+1)(1-e_2) \\
-e_2(e_1^2-1) & e_2(e_1^2-1) \\
\end{pmatrix} \\
\end{split}
\end{equation}
by Lemma \ref{lem:thrice_transitive}.
Actually these matrices satisfy the equation
\[
\rho(\beta_1)^{-1} \rho(\alpha_1)^{-1} \rho(\beta_1)\rho(\alpha_1) = \rho(\delta_1)^{-1}.
\] 
If we fix a sign of $\sqrt{-e_2 t_1}$, we obtain a lift to a $\SLC$-representation.
We have 
\begin{equation}
\label{eq:one_holed_tr}
\begin{split}
\tr(\rho(\beta_1)) &=  -\frac{({e_1}^2-e_2)t_1 + 1 - e_1^2 e_2}{(e_1^2-1)\sqrt{-e_2 t_1}} \\ 
&= - \frac{1}{e_1-{e_1}^{-1}}  \biggl( (\frac{e_1}{\sqrt{-e_2}}+\frac{\sqrt{-e_2}}{e_1})\sqrt{t_1} + (e_1 \sqrt{-e_2} + \frac{1}{e_1\sqrt{-e_2}})\frac{1}{\sqrt{t_1}} \biggr), \\ 
\end{split}
\end{equation}
\begin{equation}
\begin{split}
\tr(\rho(\alpha_1 \beta_1)) &=
 -\frac{({e_1}^{2}-{e_2})\*{e_1}^{2}\*{t_1}+1-{e_1}^{2}\*{e_2}}{({e_1}^{2}-1)\*{e_1}\*\sqrt{-{e_2}\*{t_1}}} \\
&= - \frac{1}{e_1-{e_1}^{-1}}  \biggl( (\frac{e_1}{\sqrt{-e_2}}+\frac{\sqrt{-e_2}}{e_1})e_1\sqrt{t_1} 
+ (e_1 \sqrt{-e_2} + \frac{1}{e_1\sqrt{-e_2}})\frac{1}{e_1\sqrt{t_1}} \biggr) \\ 
\end{split}
\end{equation}
where we choose $\sqrt{t_1}$ and $\sqrt{-e_2}$ so that $\sqrt{t_1} \sqrt{-e_2} = \sqrt{-e_2t_1}$.
Here we have 
\[
\tr(\rho(\beta_1))  - e_1 \tr(\rho(\alpha_1 \beta_1))
= \frac{({e_1}^{2}-{e_2})(-1+{e_1}^2)t_1}{(e_1^2-1)  \sqrt{-e_2 t_1}} 
= \frac{{e_1}^{2}-{e_2}}{\sqrt{-e_2}} \sqrt{t_1}, 
\]
therefore
\begin{equation}
\label{eq:twist_from_equation_1_1}
\begin{split}
t_1 &= \frac{- e_2}{({e_1}^{2}-{e_2})^2} \left( \tr(\rho(\beta_1))  - e_1 \tr(\rho(\alpha_1 \beta_1)) \right)^2 \\
&= \frac{1}{e_1\left(\frac{e_1}{\sqrt{-e_2}}+\frac{\sqrt{-e_2}}{e_1}\right)^2} \left( \frac{1}{\sqrt{e_1}}\tr(\rho(\beta_1)) - \sqrt{e_1} \tr(\rho(\alpha_1 \beta_1)) \right)^2. \\
\end{split}
\end{equation}


When $e_2= -1$, we have 
\[
\begin{split}
\rho(\alpha_1) &= \begin{pmatrix} e_1 & 2 e_1^{-1} \\ 0 &e_1^{-1} \end{pmatrix}, \\
\rho(\beta_1) &= \frac{1}{\sqrt{t_1}(e_1^2-1)}
\begin{pmatrix} (e_1^2+1)t_1 + 2 & -2(t_1+1) \\ -e_1^2+1 & e_1^2-1 \\ \end{pmatrix}. \\
\end{split}
\]
Let $A = \rho(\alpha_1)$ and $B= \rho(\beta_1)$. The traces of $A$, $B$ and $AB$ are given by
\[
\begin{split}
\tr(A) = &e_1+e_1^{-1}, \quad \tr(B) = \frac{1}{\sqrt{t_1}} \frac{(e_1+e_1^{-1})(t_1+1)}{(e_1-e_1^{-1})},  \\
&\tr(AB) = \frac{1}{\sqrt{t_1}} \frac{ (e_1+e_1^{-1})(e_1 t_1 + e_1 ^{-1} )}{(e_1-e_1^{-1}) }.
\end{split}
\]
Clearly this triple satisfy the Markov identity 
\[
\tr(A)^2 + \tr(B)^2 +\tr(AB)^2 -\tr(A)\tr(B)\tr(AB) = 0.
\]

\subsection{Closed surface of genus 2}
\label{sec:genus2}
\begin{figure}
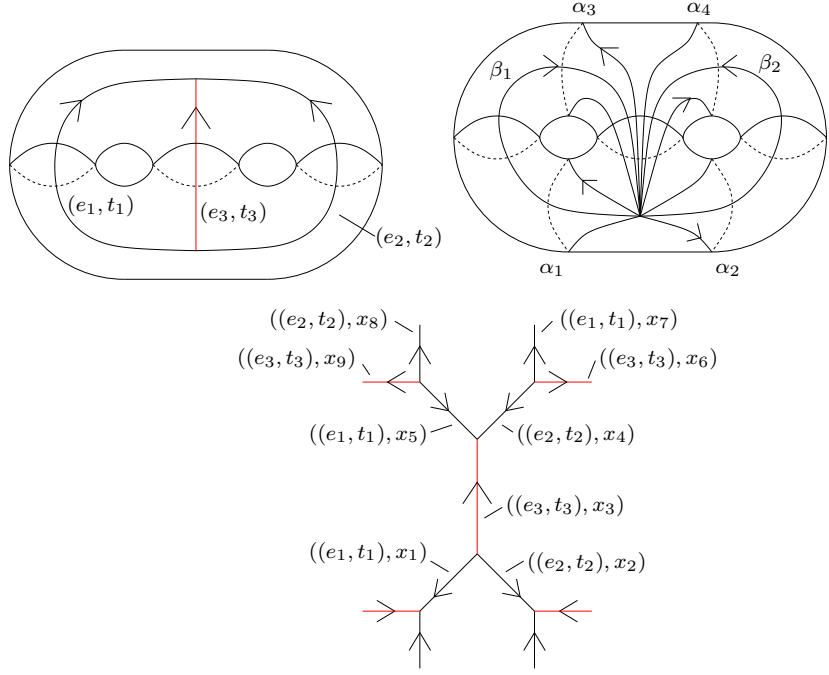

\input{genus2.pstex_t}
\hspace{20pt}
\input{gens_genus2.pstex_t}

\vspace{10pt}
\input{genus2_tr.pstex_t}
\caption{The upper left is a pants decomposition and a dual graph $G$ with a maximal tree $T$ (colored red). 
The upper right is the generators of $\pi_1(S_{2,0})$ associated to $T$.
The relations are $\alpha_3\alpha_1\alpha_2\alpha_4=1$, $\alpha_3^{-1} = \beta_1^{-1} \alpha_1 \beta_1$ and $\alpha_4^{-1} = \beta_2^{-1} \alpha_2 \beta_2$. 
The bottom is $\widetilde{G}$ and indicates fixed point parameters $x_i \in \mathbb{C}P^1$.
(We put $\widetilde{G}$ on the plane to be compatible with the order at each vertex.)}
\label{fig:genus2}
\end{figure}
Let $C$ be a pants decomposition of $S_{2,0}$, $G$ a dual graph and $T$ a maximal tree as indicated in Figure \ref{fig:genus2}.
The presentation of $\pi_1(S_{2,0})$ associated to $T$ is
\[
\begin{split}
\langle& \alpha_1, \dots, \alpha_4, \beta_1, \dots, \beta_4 \mid
\alpha_3\alpha_1\alpha_2\alpha_4=1, \quad \alpha_3^{-1}=\beta_1^{-1}\alpha_1\beta_1, \quad \alpha_4^{-1}=\beta_2^{-1}\alpha_2\beta_2 \rangle \\
&= \langle \alpha_1, \alpha_2, \beta_1, \beta_2 \mid [\beta_1^{-1},\alpha_1^{-1}][\alpha_2,\beta_2^{-1}]=1 \rangle.
\end{split}
\]
Let $(e_i,t_i)$ be the eigenvalue and twist parameters as in Figure \ref{fig:genus2}.
We assign fixed point parameters $x_1, \dots, x_9$ to some edges of the universal cover $\widetilde{G}$ of $G$ as in the bottom of Figure \ref{fig:genus2}.
Assume that $(x_1,x_2,x_3) = (\infty, 0, 1)$.
Then $x_4, \dots, x_9$ are computed by using (\ref{eq:x4})--(\ref{eq:x3}) inductively.
The matrices $\rho(\alpha_1)$ and $\rho(\alpha_2)$ are immediately obtained by applying  (\ref{eq:pants_representation}) to 
$(e_1,e_2,e_3)$ and $(x_1,x_2,x_3)=(\infty,0,1)$:
\[
\rho(\alpha_1) = \begin{pmatrix} e_1^{-1} & 0 \\ -e_1 + e_2^{-1} e_3 & e_1 \end{pmatrix}, \quad 
\rho(\alpha_2) = \begin{pmatrix} e_1 e_3^{-1} & e_2 - e_1 e_3^{-1} \\ -e_2^{-1}+ e_1 e_3^{-1} & e_2+e_2^{-1} - e_1 e_3^{-1} \end{pmatrix}, 
\]
Now $\rho(\beta_1)$ is the matrix which sends $(x_5,x_8, x_9)$ to $(\infty, 0, 1)$ and $\rho(\beta_2)$ is the matrix which sends $(x_4,x_6, x_7)$ to $(\infty, 0, 1)$.
From Lemma \ref{lem:thrice_transitive}, we obtain 
\[
\begin{split}
\rho(\beta_1) =&
\frac
{1}
{\sqrt{t_1 t_3}}
\begin{pmatrix} a_{11} & a_{12} \\ a_{21} & a_{22}\end{pmatrix}, \\
a_{11} &= 1 \\
a_{12} &=
-\frac{({e_2}{e_3}-{e_1})\*({t_3}+1)}{{e_1}({e_3}^2-1)}, \\
a_{21} &= 
\frac{{e_1}\*({t_1}+1)\*({e_1}\*{e_2}-{e_3})}{({e_1}^2-1){e_2}}, \\
a_{22} &= 
\frac
{
({e_1}{e_2}{e_3}-1)({e_1}{e_3}-{e_2}){t_1}{t_3}
-({e_1}\*{e_2}-{e_3})({e_2}{e_3}-{e_1})({t_1}+{t_3}+1)
}
{
({e_1}^2-1){e_2}\*({e_3}^2-1)
}, \\
\rho(\beta_2) =&
\frac
{1}
{({e_2}^2-1){e_3}\*\sqrt{t_2 t_3}}
\begin{pmatrix} b_{11} & b_{12} \\ b_{21} & b_{22}\end{pmatrix}, \\
b_{11} &= 
({e_1}\*{e_2}-{e_3})\*{t_2}-{e_2}({e_2}{e_3}-{e_1}),  \\
b_{12} &= 
-({e_2}{e_3}-{e_1})\*(
{e_3}\*({e_1}\*{e_2}{e_3}-1){t_2}\*{t_3}
+({e_3}-{e_1}\*{e_2})\*{t_2} \\
& \quad \quad +{e_2}{e_3}({e_2}-{e_1}{e_3})\*{t_3}
-{e_2}({e_1}-{e_2}{e_3}) 
)/(
{e_1}({e_3}^2-1)
), \\
b_{21} &= 
({e_1}\*{e_2}-{e_3}) ({t_2}+1), \\
b_{22} &=
-(
{e_3}({e_1}\*{e_2}{e_3}- 1)({e_2}{e_3}-{e_1}){t_2}\*{t_3} 
-{e_3}\*({e_1}\*{e_2}-{e_3})({e_1}{e_3}-{e_2})\*{t_3} \\
& \quad \quad +({e_1}\*{e_2}-{e_3})({e_2}{e_3}-{e_1})(1+{t_2})
)/(
{e_1}({e_3}^2-1) 
). \\
\end{split}
\]
Once we fix lifts of $\rho(\beta_i)$ to $\SLC$, we obtain an $\SLC$-representation $\rho$.
Actually we can check that these matrices satisfy the equality
\[
\rho(\beta_1)^{-1} \rho(\alpha_1)^{-1}
\rho(\beta_1) \rho(\alpha_1)
\rho(\alpha_2) \rho(\beta_2)^{-1}
\rho(\alpha_2)^{-1}  \rho(\beta_2) = I,
\]
for any $(e_1,e_2,e_3, t_1, t_2, t_3) \in (\mathbb{C} - \{0,\pm 1\})^3 \times (\mathbb{C} -\{0\})^3 $.
(To obtain representations satisfying (C2), restrict the parameters to $E(S,C) \times (\mathbb{C} -\{0\})^3$.)
Some trace functions are given as follows: 
\[
\tr(\rho(\alpha_1)) = e_1 + e_1^{-1}, \quad
\tr(\rho(\alpha_2)) = e_2 + e_2^{-1},
\]
\[
\begin{split}
\tr(\rho(\beta_1))&=
 -\frac{
( {e_2}-\frac{1}{e_1 e_3})\*({e_2}-{e_1}{e_3})({t_1}{t_3}+1)
+({e_2}-\frac{e_3}{e_1})({e_2}-\frac{e_1}{e_3})\*({t_1}+{t_3})
}
{({e_1}-{e_1}^{-1})\*{e_2}\*({e_3}-{e_3}^{-1})\*\sqrt{{t_1}\*{t_3}}}, \\
\tr(\rho(\beta_2))&=
 -\frac{
({e_1}-\frac{1}{e_2 e_3})({e_1}-{e_2}{e_3})\*({t_2}{t_3}+1)
+({e_1}-\frac{e_3}{e_2})({e_3}-\frac{e_2}{e_1})\*({t_2}+{t_3})
}
{{e_1}\*({e_2}-{e_2}^{-1})\*({e_3}-{e_3}^{-1})\*\sqrt{{t_2}\*{t_3}}}. \\
\end{split}
\]

\section{Action of $(\mathbb{Z}/2\mathbb{Z})^{3g-3+2b}$}
\label{sec:action}
%
%


We have constructed a map 
\[
\xymatrix{
Y \ar[r] \ar[d] & X_{SL}(S,C) \ar[d] \\
E(S,C) \times (\mathbb{C} \setminus \{0\})^{3g-3+b} \ar[r] & X'_{PSL}(S)
}
\]
where $E(S,C)$ corresponds to the eigenvalue parameters, $(\mathbb{C} \setminus \{0\})^{3g-3+b}$ corresponds to the twist parameters 
and $Y \to E(S,C) \times (\mathbb{C} \setminus \{0\})^{3g-3+b}$ is a covering map with covering group isomorphic to $H^1(G; \mathbb{Z}/2\mathbb{Z})$.
By changing the choices of eigenvalues, $(\mathbb{Z}/2\mathbb{Z})^{3g-3+2b}$ acts on $E(S,C)$.
This action also affects the twist parameters.
In this section we describe the action of the group $(\mathbb{Z}/2\mathbb{Z})^{3g-3+2b}$ on the twist parameters.

\subsection{Four-holed sphere}
In this subsection, we describe the behaviour of the twist parameter corresponding to the interior pants curve on a four-holed sphere.

We take a dual graph as indicated in Figure \ref{fig:twist_parameter} and assign eigenvalue parameters $e_1, \dots, e_5$ and twist parameter $t_1$.
We simply denote the transformation $\langle e_1, \dots, e_i, \dots, e_5\rangle \to \langle e_1, \dots, {e_i}^{-1}, \dots, e_5\rangle$ by $e_i \to {e_i}^{-1}$. 
Then $\{ e_i \to {e_i}^{-1} \}_{i=1,\dots, 5}$ generates the group $(\mathbb{Z}/2\mathbb{Z})^5$.

\begin{theorem}
Define a dual graph and eigenvalue and twist parameters as indicated in Figure \ref{fig:twist_parameter}.
Then the action of $(\mathbb{Z}/2\mathbb{Z})^5$ on the eigenvalue and twist parameters is given by
\begin{equation}
\label{eq:action_of_Z2_for_four_holed}
\begin{split}
(e_1 \to {e_1}^{-1}) \cdot (e_1, t_1) &= ({e_1}^{-1}, {t_1}^{-1}), \\
(e_2 \to {e_2}^{-1}) \cdot (e_2, t_1) &= ({e_2}^{-1}, \frac{e_2e_3-e_1}{1-e_1e_2e_3}\frac{e_1e_3-e_2}{e_1e_2-e_3} t_1), \\
(e_3 \to {e_3}^{-1}) \cdot (e_3, t_1) &= ({e_3}^{-1}, t_1), \\
(e_4 \to {e_4}^{-1}) \cdot (e_4, t_1) &= ({e_4}^{-1}, t_1), \\
(e_5 \to {e_5}^{-1}) \cdot (e_5, t_1) &= ({e_5}^{-1}, \frac{e_4e_5-e_1}{1-e_1e_4e_5}\frac{e_1e_4-e_5}{e_1e_5-e_4} t_1). \\
\end{split}
\end{equation}
(We omit the parameters which are invariant under the action.) 
If the orientation of the edge corresponding to $e_i$ $(i=2,3,4,5)$ is reversed, replace $e_i$ by ${e_i}^{-1}$ in the coefficient of $t_1$.
\end{theorem}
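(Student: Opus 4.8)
The plan is to exploit the observation that replacing an eigenvalue parameter $e_i$ by ${e_i}^{-1}$ does not alter the underlying representation $\rho'=\rho|_{\pi_1(S')}$ at all: it only changes which of the two fixed points of $\rho'(\gamma_i)$ is designated as $x_i$. Since $x_i$ is by convention the fixed point corresponding to $e_i$, the transformation $e_i \to {e_i}^{-1}$ simply swaps $x_i \leftrightarrow y_i$, while every other fixed point $x_j$ ($j\neq i$) and the representation itself remain unchanged (the fixed points of $\rho'(\gamma_j)$ depend only on $\rho'(\gamma_j)$ and on the unaltered choice $e_j$). The twist parameter $t_1$ is recovered from the fixed points through the four cross-ratio formulae of Lemma \ref{lem:twist_parameter_from_fixed_points}, and the key point is that each of these expressions omits exactly one of the boundary fixed points $x_2,x_3,x_4,x_5$. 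So for each $i\in\{2,3,4,5\}$ I will select the formula in which $x_i$ does not appear, so that the swap $x_i\leftrightarrow y_i$ has no effect and only the explicit occurrences of $e_i$ contribute.

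Concretely, for $e_3\to{e_3}^{-1}$ I use (\ref{eq:t1_from_x1_x2_x4_x5}), whose right-hand side contains neither $x_3$ nor $e_3$; hence $t_1$ is unchanged. Likewise for $e_4\to{e_4}^{-1}$ I use (\ref{eq:t1_from_x1_x2_x3_x5}), which involves neither $x_4$ nor $e_4$, giving invariance of $t_1$. For $e_2\to{e_2}^{-1}$ I use (\ref{eq:t1_from_x1_x3_x4_x5}): here $x_2$ is absent, so the cross ratio $[x_3:x_4:x_1:x_5]$ is untouched and only the rational prefactor $-\frac{e_1e_3-e_2}{e_1(e_1e_2-e_3)}$ changes under $e_2\mapsto{e_2}^{-1}$. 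Dividing the new prefactor by the old and simplifying (using $e_1e_3-{e_2}^{-1}={e_2}^{-1}(e_1e_2e_3-1)$ and $e_1{e_2}^{-1}-e_3={e_2}^{-1}(e_1-e_2e_3)$) yields the scaling factor $\frac{(e_2e_3-e_1)(e_1e_3-e_2)}{(1-e_1e_2e_3)(e_1e_2-e_3)}$ for $t_1$. Symmetrically, for $e_5\to{e_5}^{-1}$ I use (\ref{eq:t1_from_x1_x2_x3_x4}), whose bracketed cross ratio omits $x_5$ and whose interior involves none of $e_5$, so that all the $e_5$-dependence sits in the prefactor $-\frac{e_1e_5-e_4}{e_1(e_1e_4-e_5)}$; the same short computation produces the stated factor $\frac{e_4e_5-e_1}{1-e_1e_4e_5}\frac{e_1e_4-e_5}{e_1e_5-e_4}$.

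The case $e_1\to{e_1}^{-1}$ is special, because no formula of Lemma \ref{lem:twist_parameter_from_fixed_points} omits $x_1$, so I argue directly from the definition (\ref{eq:gluing_map}): the swap $(x_1,y_1)\mapsto(y_1,x_1)$ together with the identity $M(e;x,y)=M(e^{-1};y,x)$ turns $M(\sqrt{-t_1};x_1,y_1)$ into $M((\sqrt{-t_1})^{-1};x_1,y_1)$, which still sends the unchanged $x_2$ to the unchanged $x_5$; uniqueness up to sign in Lemma \ref{lem:two_fixed_pts_with_one_point_behavior} then forces $(\sqrt{-{t_1}'})^{-1}=\pm\sqrt{-t_1}$, i.e. ${t_1}'={t_1}^{-1}$. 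Finally, the closing assertion about reversing an edge's orientation follows immediately from the defining convention for $x_i$: an inward orientation designates the fixed point corresponding to ${e_i}^{-1}$ as $x_i$, which is exactly the effect of replacing $e_i$ by ${e_i}^{-1}$ throughout the formulae, hence in the coefficient of $t_1$.

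The main obstacle is organizational rather than computational: recognizing that the redundancy among the four formulae of Lemma \ref{lem:twist_parameter_from_fixed_points} is precisely what trivializes the $x_i$-swap, so that the nontrivial cases $e_2$ and $e_5$ reduce to comparing two rational prefactors while $e_3$ and $e_4$ give outright invariance. Once the correct formula is chosen for each $i$, the verifications for $e_2,\dots,e_5$ are one-line simplifications and $e_1$ is settled by the $M(e;x,y)=M(e^{-1};y,x)$ symmetry; the only point requiring care is confirming that the fixed points not named in the chosen formula genuinely stay fixed when $e_i$ is inverted, which holds because they are intrinsic to the unchanged representation.
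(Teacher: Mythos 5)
Your proof is correct, and it rests on the same two pillars as the paper's: the identities of Lemma \ref{lem:twist_parameter_from_fixed_points} and the observation that $e_i\mapsto e_i^{-1}$ merely swaps $x_i\leftrightarrow y_i$ while leaving the representation and all other fixed points alone. The difference is in which of the four identities you deploy for the nontrivial generators. The paper handles $e_2\to e_2^{-1}$ by staying with (\ref{eq:t1_from_x1_x2_x3_x5}), substituting the explicit expression (\ref{eq:other_fixed_point}) for $y_2$ into the cross ratio $[x_5:x_3:x_1:x_2]$, and simplifying over several lines (and does the analogous computation for $e_5$ via (\ref{eq:t1_from_x1_x2_x4_x5})). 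You instead choose for each $i$ the identity in which $x_i$ does not appear at all --- (\ref{eq:t1_from_x1_x3_x4_x5}) for $e_2$, (\ref{eq:t1_from_x1_x2_x3_x4}) for $e_5$ --- so the cross ratio is untouched and the whole effect is a ratio of two rational prefactors, computed in one line from $e_1e_3-e_2^{-1}=e_2^{-1}(e_1e_2e_3-1)$ and $e_1e_2^{-1}-e_3=e_2^{-1}(e_1-e_2e_3)$. This is a genuine economy: it makes the redundancy among the four formulae do all the work, and it isolates the only case where a fixed point that actually occurs must move, namely $e_1$, which you correctly settle from the definition (\ref{eq:gluing_map}) via $M(e;x,y)=M(e^{-1};y,x)$ and the uniqueness in Lemma \ref{lem:two_fixed_pts_with_one_point_behavior}. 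Both routes prove the same statement; yours trades the paper's explicit cross-ratio manipulation for a cleaner bookkeeping argument, at no loss of rigor.
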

We remark that these commute and give an action of $(\mathbb{Z}/2\mathbb{Z})^5$.
For example, we have
\[
\begin{split}
(e_1 \to {e_1}^{-1}) & \cdot ( (e_2 \to {e_2}^{-1}) \cdot (e_1,e_2, t_1) )  \\
&= (e_1 \to {e_1}^{-1}) \cdot (e_1, {e_2}^{-1}, \frac{e_2e_3-e_1}{1-e_1e_2e_3}\frac{e_1e_3-e_2}{e_1e_2-e_3} t_1) \\
&= ({e_1}^{-1}, {e_2}^{-1}, \frac{1-e_1e_2e_3}{e_2e_3-e_1}\frac{e_1e_2-e_3}{e_1e_3-e_2} {t_1}^{-1}) \\
&= ({e_1}^{-1}, {e_2}^{-1}, \frac{e_2e_3-{e_1}^{-1}}{1-{e_1}^{-1}e_2e_3}\frac{{e_1}^{-1}e_3-e_2}{{e_1}^{-1}e_2-e_3} {t_1}^{-1}) \\
&= (e_2 \to {e_2}^{-1}) \cdot ({e_1}^{-1}, e_2, {t_1}^{-1}) \\
&= (e_2 \to {e_2}^{-1}) \cdot ( (e_1 \to {e_1}^{-1}) \cdot ({e_1}, e_2, {t_1}) ). \\
\end{split}
\]
We remark that these only describe the effects on $t_1$ and other twist parameters adjacent to these edges may change.
\begin{proof}
By (\ref{eq:t1_from_x1_x2_x3_x5}), we have 
\[
t_1 = -1 +\frac{e_2(1-e_1^2)}{e_2-e_1e_3}[x_5:x_3:x_1:x_2]
\]
If we replace $e_4$ by ${e_4}^{-1}$, then $x_4$ is replaced by $y_4$ where $y_4$ is the other fixed points.
Therefore $t_1$ does not change.
If we replace $e_2$ by $e_2^{-1}$, then $x_2$ is replaced by $y_2$ and 
\[
y_2 = \frac{e_2^2 e_3 x_3(x_2-x_1) +e_3 x_1 (x_3-x_2) + e_2 e_1 x_2 (x_1-x_3)}{e_2^2 e_3 (x_2-x_1) +e_3 (x_3-x_2) + e_2 e_1 (x_1-x_3)}
\]
by (\ref{eq:other_fixed_point}).
Thus the new twist parameter $t_1'$ is given by 
\[
\begin{split}
t_1' =& -1 + \frac{e_2^{-1}(1-e_1^2)}{e_2^{-1}-e_1e_3}[x_5:x_3:x_1:y_2] \\
=& -1 + \frac{(1-e_1^2)}{1-e_1e_2e_3}\frac{x_1-x_3}{x_1-x_5} \\
& \times \frac{e_2^2 e_3 (x_3-x_5)(x_2-x_1) +e_3 (x_1-x_5) (x_3-x_2) + e_2 e_1 (x_2-x_5)(x_1-x_3)}{e_3 (x_1-x_3) (x_3-x_2) + e_2 e_1 (x_2-x_3) (x_1-x_3)} \\
=& -1 + \frac{(1-e_1^2)}{1-e_1e_2e_3} \frac{(e_3-e_2^2 e_3) (x_1-x_5) (x_3-x_2) + (e_2 e_1-e_2^2 e_3 ) (x_2-x_5)(x_1-x_3)}{(e_2 e_1 -e_3)(x_2-x_3) (x_1-x_5)} \\
=& -1 + \frac{(1-e_1^2)}{1-e_1e_2e_3} \left( -\frac{e_3(1-e_2^2)}{e_1 e_2 -e_3} + \frac{e_2 (e_1-e_2 e_3 )}{e_1 e_2 -e_3}[x_5:x_3:x_1:x_2] \right) \\
=& \frac{e_2 e_3 -e_1}{e_1 e_2 -e_3} \frac{e_1 e_3 -e_2}{1-e_1e_2e_3} \left( -1 +  \frac{e_2(1-e_1^2)}{e_2-e_1 e_3}[x_5:x_3:x_1:x_2] \right) \\
=& \frac{e_2 e_3 -e_1}{e_1 e_2 -e_3} \frac{e_1 e_3 -e_2}{1-e_1e_2e_3} t_1. \\
\end{split}
\]
By a similar calculation for (\ref{eq:t1_from_x1_x2_x4_x5}), we obtain the behavior under ${e_3} \to e_3^{-1}$ and $e_5 \to {e_5}^{-1}$.
The action of $e_1 \to {e_1}^{-1}$ follows from a similar calculation or the definition of the twist parameter.
\end{proof}

\subsection{One holed torus}
Next  we describe the behaviour of the twist parameter corresponding to the interior pants curve on a one-holed torus.

\begin{figure}
\input{one_holed_with_gr_a.pstex_t}
\hspace{20pt}
\input{one_holed_with_gr_b.pstex_t}
\caption{}
\label{fig:two_dual_graphs}
\end{figure}

\begin{theorem}
Take a dual graph and parameters as in the left of Figure \ref{fig:two_dual_graphs}.
Then $(\mathbb{Z}/2\mathbb{Z})^2$ acts as 
\begin{equation}
\label{eq:action_of_Z2_for_one_holed_a}
\begin{split}
(e_1 \to {e_1}^{-1}) \cdot (e_1, t_1) &= ({e_1}^{-1}, {t_1}^{-1}), \\
(e_2 \to {e_2}^{-1}) \cdot (e_2, t_1) &= ({e_2}^{-1}, \left( \frac{e_2 -{e_1}^2}{{e_1}^2 e_2 -1} \right)^{2} t_1). \\
\end{split}
\end{equation}
Take a dual graph and parameters as in the right of Figure \ref{fig:two_dual_graphs}.
Then $(\mathbb{Z}/2\mathbb{Z})^2$ acts as \begin{equation}
\label{eq:action_of_Z2_for_one_holed_b}
\begin{split}
(e_1 \to {e_1}^{-1}) \cdot (e_1, t_1) &= ({e_1}^{-1}, {t_1}^{-1}), \\
(e_2 \to {e_2}^{-1}) \cdot (e_2, t_1) &= ({e_2}^{-1}, t_1). \\
\end{split}
\end{equation}
\end{theorem}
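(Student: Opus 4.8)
The plan is to reduce both assertions to the four-holed sphere formula (\ref{eq:action_of_Z2_for_four_holed}) already established. Recall from Section \ref{sec:twist_paramter} that the twist parameter of a one-holed torus is defined by passing to the covering associated to the loop formed by the edge $f_1$, which unwinds the single pair of pants into a four-holed sphere. The five eigenvalue parameters $(E_1,\dots,E_5)$ of this four-holed sphere (I use capitals to distinguish them from the torus parameters $e_1,e_2$) are then expressed through $e_1,e_2$ by reading off the dual graph. For the left configuration of Figure \ref{fig:two_dual_graphs} this correspondence is the one recorded in Section \ref{subsec:once_punctured}, namely $(E_1,E_2,E_3,E_4,E_5)=(e_1,e_2,e_1^{-1},e_1,e_2)$: the interior curve $c_1$ appears as the central curve $E_1$ and as the two boundary curves $E_3,E_4$, while the torus boundary appears as $E_2,E_5$.

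First I would translate each generator of the torus action $(\mathbb{Z}/2\mathbb{Z})^2$ into the four-holed sphere action $(\mathbb{Z}/2\mathbb{Z})^5$. On the left configuration, $e_1\to e_1^{-1}$ flips $E_1,E_3,E_4$ simultaneously and $e_2\to e_2^{-1}$ flips $E_2,E_5$. Feeding these into (\ref{eq:action_of_Z2_for_four_holed}), the flips of $E_3$ and $E_4$ fix $t_1$ while the flip of $E_1$ sends $t_1\mapsto t_1^{-1}$, yielding the first line of (\ref{eq:action_of_Z2_for_one_holed_a}). For the second line I would multiply the two coefficients attached to $E_2\to E_2^{-1}$ and $E_5\to E_5^{-1}$ and substitute $(E_1,\dots,E_5)=(e_1,e_2,e_1^{-1},e_1,e_2)$; the first collapses to $(e_2-e_1^2)/(e_1^2e_2-1)$ and the second to $(e_1^2-e_2)/(1-e_1^2e_2)$, and using $(e_2-e_1^2)(e_1^2-e_2)=-(e_2-e_1^2)^2$ together with the analogous identity in the denominator their product becomes the asserted $\left((e_2-e_1^2)/(e_1^2e_2-1)\right)^2$.

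The right configuration is handled identically, the only change being the correspondence read off from its dual graph, which places the torus boundary in the positions $E_3,E_4$ instead: concretely $(E_1,E_2,E_3,E_4,E_5)=(e_1,e_1^{-1},e_2,e_2,e_1)$. Since flips of $E_3,E_4$ leave $t_1$ invariant by (\ref{eq:action_of_Z2_for_four_holed}), the flip $e_2\to e_2^{-1}$ acts trivially, giving the second line of (\ref{eq:action_of_Z2_for_one_holed_b}). For the first line, $e_1\to e_1^{-1}$ flips $E_1,E_2,E_5$; after substitution the two coefficients from $E_2\to E_2^{-1}$ and $E_5\to E_5^{-1}$ multiply to $1$, so only the factor $t_1\mapsto t_1^{-1}$ coming from $E_1\to E_1^{-1}$ survives.

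I expect the main difficulty to be bookkeeping rather than anything conceptual. The crucial point is reading off the correct map $e_i\mapsto E_j$ from each dual graph and respecting the orientation convention stated after (\ref{eq:action_of_Z2_for_four_holed}), whereby reversing an edge replaces the relevant $e_i$ by $e_i^{-1}$ in the coefficient. Because a single eigenvalue flip on the torus induces two or three simultaneous flips on the four-holed sphere, the nontrivial content is checking that the resulting products of coefficients simplify exactly as claimed---into a perfect square in the left configuration and into $1$ in the right---so the care lies in these algebraic cancellations rather than in any new geometric input.
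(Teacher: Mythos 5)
Your proposal is correct and follows exactly the paper's own route: pass to the covering of Figure \ref{fig:twist_for_one_holed} to realize the one-holed torus as a four-holed sphere with eigenvalue parameters $(e_1,e_2,e_1^{-1},e_1,e_2)$ (resp.\ $(e_1,e_1^{-1},e_2,e_2,e_1)$), apply (\ref{eq:action_of_Z2_for_four_holed}) to each induced flip, and multiply the resulting coefficients. The correspondences you read off and the cancellations you describe (product equal to a perfect square on the left, equal to $1$ on the right) are precisely the computations carried out in the paper's proof.
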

\begin{proof}
Considering a covering space as in Figure \ref{fig:twist_for_one_holed}, we reduce the calculation to the case of four-holed sphere. 
In the case of the left of Figure \ref{fig:two_dual_graphs}, by (\ref{eq:action_of_Z2_for_four_holed}), 
we have $(e_1 \to {e_1}^{-1}) \cdot (e_1,t_1) = ({e_1}^{-1}, {t_1}^{-1})$ and 
\[
\begin{split}
(e_2 \to {e_2}^{-1}) \cdot t_1 &=   
\left( \frac{e_2{e_1}^{-1}-e_1}{1-e_1e_2{e_1}^{-1}}\frac{e_1{e_1}^{-1}-e_2}{e_1e_2-{e_1}^{-1}} \right) 
\left( \frac{e_1e_2-e_1}{1-e_1e_1e_2}\frac{e_1e_1-e_2}{e_1e_2-e_1} \right) t_1 \\
&= \frac{e_2{e_1}^{-1}-e_1}{e_1e_2-{e_1}^{-1}} \frac{e_1e_1-e_2}{1-e_1e_1e_2} t_1 = \left( \frac{e_2-{e_1}^2}{{e_1}^2e_2-1} \right)^2 t_1. \\
\end{split}
\]
In the case of the right of Figure \ref{fig:two_dual_graphs}, again by (\ref{eq:action_of_Z2_for_four_holed}), 
we have $(e_2 \to {e_2}^{-1}) \cdot (e_2,t_1) = ({e_2}^{-1}, t_1)$ and 
\[
\begin{split}
(e_1 \to {e_1}^{-1}) \cdot t_1 &=   
\left( \left( \frac{{e_1}^{-1}e_2-e_1}{1-e_1{e_1}^{-1}e_2}\frac{e_1e_2-{e_1}^{-1}}{e_1{e_1}^{-1}-e_2} \right)
\left( \frac{e_2e_1-e_1}{1-e_1e_2e_1}\frac{e_1e_2-e_1}{e_1e_1-e_2} \right) t_1 \right)^{-1} \\
&= {t_1}^{-1}.
\end{split}
\]
\end{proof}

\subsection{Example: closed surface of genus two}
It is easy to check that the trace functions of the four-holed sphere given in \S \ref{subsec:four_holed_sphere} 
are invariant under the action of (\ref{eq:action_of_Z2_for_four_holed}).
It is also easy to check that the trace functions of the one-holed torus given in \S \ref{subsec:once_punctured} 
are invariant under the action of (\ref{eq:action_of_Z2_for_one_holed_a}).

We apply the transformation formulae to the surface of genus two.
Applying the formula in (\ref{eq:action_of_Z2_for_four_holed}), the effect on $t_1$ under the action of $e_2 \to {e_2}^{-1}$ is given by
\[
(e_2 \to {e_2}^{-1}) \cdot t_1  = 
\left( \frac{e_2e_3-e_1}{1-e_1e_2e_3} \cdot \frac{e_1e_3-e_2}{e_1e_2-e_3} \right)
\left( \frac{{e_3}^{-1}{e_2}^{-1}-e_1}{1-e_1{e_3}^{-1}{e_2}^{-1}} \cdot \frac{e_1{e_3}^{-1}-{e_2}^{-1}}{e_1{e_2}^{-1}-{e_3}^{-1}} \right) t_1 
= t_1.
\]
By similar calculations, the action of $(\mathbb{Z}/2\mathbb{Z})^3$ on the parameter space is given by 
\[
\begin{split}
(e_1,e_2,e_3,t_1,t_2,t_3) &\mapsto ({e_1}^{-1},e_2,e_3,{t_1}^{-1},t_2,t_3), \\
(e_1,e_2,e_3,t_1,t_2,t_3) &\mapsto (e_1,{e_2}^{-1},e_3,t_1,{t_2}^{-1},t_3), \\
(e_1,e_2,e_3,t_1,t_2,t_3) &\mapsto (e_1,e_2,{e_3}^{-1},t_1,t_2,{t_3}^{-1}). \\
\end{split}
\]
Actually the trace functions in \S \ref{sec:genus2} are invariant under the action.

\section{Transformation formula}
\label{sec:transformation}
%
%


Our parametrization is defined for a given pants decomposition with a dual graph.
In this section we give transformation formulae under changes of dual graphs and pants decompositions.
We define five types of moves among pants decompositions with dual graphs.
We will show that any two pants decompositions with dual graphs are related by these moves.
Then we give transformation formulae for these moves.

\subsection{Elementary move and dual graph}
We define five types of moves between pants decompositions with dual graphs:
\begin{description}
\item[(I) Reverse orientation] Reverse the orientation of an edge of the dual graph.
\item[(II) Dehn twist] Change the dual graph by a (left or right) Dehn twist along a pants curve.
\item[(III) Vertex move] For a vertex of the dual graph, change the edges adjacent to the vertex by their right half-twists as Figure \ref{fig:vertex_move}.
\item[(IV) Graph automorphism] Composition with an automorphism $\varphi: G \to G$ preserving the orientations of the edges. 
(Change $(C,(g,G))$ to $(C,(g \circ \varphi, G))$.)
\item[(V) Elementary move] On a subsurface homeomorphic to a one-holed torus or a four-holed sphere, 
we define the moves as indicated in Figure \ref{fig:elementary_moves_with_dual_graphs}.
In the one-holed torus case, this is obtained by a clockwise rotation of angle $\pi/2$ (Figure \ref{fig:elementary_moves_with_dual_graphs_for_one_holed_in_a_cover}).
\end{description}
Except elementary moves, these moves do not change the pants decomposition.

\begin{figure}
\input{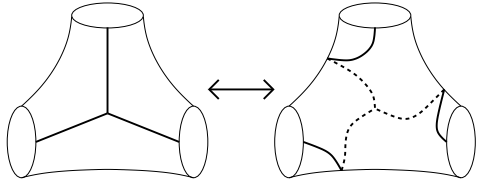}
\caption{(III) Vertex move}
\label{fig:vertex_move}
\end{figure}

\begin{lemma}
\label{lem:moves_do_not_change_pants_decomposition}
Let $C$ be a pants decomposition of $S$. Let $g_1: G_1 \to S$ and $g_2 : G_2 \to S$ be dual graphs dual to $C$.
Then $(g_1,G_1)$ and $(g_2,G_2)$ are related by a sequence of type (I)--(IV) moves and their inverses.
\end{lemma}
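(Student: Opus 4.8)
The plan is to first normalize the combinatorics of the two dual graphs and then to localize the comparison to the annular neighborhoods of the pants curves and to the individual pairs of pants. First I would observe that, since $C$ is fixed, the underlying abstract graph of any dual graph is completely determined: it is the nerve of the pants decomposition, with one trivalent vertex for each pair of pants of $S \setminus N(C)$, one univalent vertex for each boundary component, an interior edge for each interior pants curve (joining the vertices of the one or two adjacent pairs of pants), and a boundary edge for each boundary curve. Thus $G_1$ and $G_2$ are abstractly isomorphic. Fixing such an isomorphism, a type (IV) move lets me arrange that $g_1$ and $g_2$ induce the \emph{same} identification of vertices (resp. edges) of $G$ with the pairs of pants and boundary components (resp. with the pants curves), and type (I) moves let me arrange that the two embeddings orient every edge in the same way. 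After this normalization I may assume $G_1 = G_2 = G$ as oriented graphs with $g_1$ and $g_2$ agreeing combinatorially, so that the only remaining difference is one of isotopy classes of embeddings of a fixed graph.

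Next I would cut $S$ along $C$ into the closed pairs of pants $P_j = \overline{S \setminus N(C)}$ together with the annular collars $N(c_i)$ of the interior pants curves. By the defining properties of a dual graph, each interior edge meets exactly one annulus $N(c_i)$ in a single transverse crossing arc, while the legs of the interior edges and all of the boundary edges lie inside the pairs of pants. Inside each annulus the two embeddings give arcs crossing the core once with prescribed endpoints on the two boundary circles; any two such arcs differ by a power of the Dehn twist about the core, and this difference is realized by the corresponding type (II) move along $c_i$. (Winding of a boundary edge in the collar of a boundary curve is absorbed in the same way by a type (II) move along that boundary curve, while the position of the univalent vertex on $\partial S$ is adjusted by an ambient isotopy preserving $\partial S$ setwise.) After applying these twists I may assume that $g_1$ and $g_2$ coincide on $N(C)$, and in particular that they enter each $\partial P_j$ at the same points.

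It then remains to compare the two embeddings inside a single pair of pants $P_j$, where each restricts to a trivalent spine: a central vertex joined by three legs to the now-common entry points on its three boundary circles. The key local fact, and the step I expect to be the main obstacle, is that two such spines with the same endpoints are determined up to isotopy by the cyclic order of the three legs at the trivalent vertex and by the winding of the legs near the boundary circles; the cyclic order (one of the two orders compatible with the orientation of $S$) is interchanged by the half-twist, i.e. the type (III) move, and the winding is adjusted by type (II) Dehn twists. The genuine difficulty is that a leg may wind around a boundary circle \emph{other} than the one on which it ends, and that every type (II) and type (III) adjustment is supported near a pants curve shared with a neighboring pair of pants; hence the local normalizations in adjacent pants cannot be carried out independently and must be assembled into one globally consistent sequence of moves. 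Granting this bookkeeping, the resulting sequence of type (I)--(IV) moves and their inverses carries $(g_1, G_1)$ to $(g_2, G_2)$, which proves the lemma.
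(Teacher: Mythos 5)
Your overall strategy is the same as the paper's: localize to the pairs of pants and the annular collars of the pants curves, use type (III) for the tripods, type (II) for the twisting in the annuli, and types (I), (IV) for the identification of the abstract graphs. But there is a genuine gap at the step you yourself flag as the main obstacle, and it is exactly the step the paper's proof is organized to avoid. The missing ingredient is the local classification: \emph{up to isotopy of a pair of pants $P$ preserving $\partial P$ setwise (not pointwise), there are exactly two embedded tripods whose legs end on the three distinct boundary components, distinguished by the cyclic order of the legs at the trivalent vertex, and they are related by a type (III) move.} This holds because the orientation-preserving mapping class group of $P$ fixing each boundary component setwise is trivial (the boundary Dehn twists, which generate $\mathrm{MCG}(P,\partial P)\cong\mathbb{Z}^3$, die once the boundary is allowed to rotate), so the cyclic order at the vertex is a \emph{complete} invariant. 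In particular, the ``winding of a leg around a boundary circle other than the one on which it ends'' that you worry about is not an invariant at all in the setwise category: it is absorbed by isotopy inside $P$. You do not prove this classification, and instead postulate additional invariants (windings) that would have to be matched, leaving the matching unresolved (``granting this bookkeeping'').

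The reason your version runs into this while the paper's does not is the order of normalization. You first pin down the graphs on the annuli $N(c_i)$, which fixes the entry points into each pair of pants essentially pointwise; after that, tripods in $P$ must be compared rel (pointwise) boundary data, where the classification really is by a $\mathbb{Z}^3$ worth of twisting and legs genuinely can wind --- hence your bookkeeping problem. The paper does it in the opposite order: first normalize the tripod in each pair of pants up to isotopy preserving $\partial P$ setwise (one type (III) move per pair of pants at most), so that $g_1(G_1)$ and $g_2(G_2)$ agree away from the collars; only then are the arcs in each annulus compared, where they differ by a power of the Dehn twist about the core, realized by a type (II) move supported in that single annulus. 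In this order the adjustments in distinct annuli and distinct pairs of pants are manifestly independent, so the ``globally consistent sequence'' you ask for is automatic. To repair your proof, either reverse the order of normalization as above, or prove the setwise classification of tripods directly and deduce that your extra winding invariants vanish.
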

\begin{proof}
Consider the restriction of the images $g_1(G_1)$ and $g_2(G_2)$ on a pair of pants $P \subset S \setminus N(C)$.
These are tripods on $P$ whose endpoints of the legs are on different boundary components.
We remark that there are two such tripods on $P$ up to isotopy preserving boundary of $P$ setwise (not pointwise), 
and they are related by a type (III) move each other.
Hence we assume that $g_1(G_1)$ and $g_2(G_2)$ coincide except on annular neighborhoods of the pants curves. 
Performing Dehn twists near the pants curves, we assume that $g_1(G_1)$ and $g_2(G_2)$ coincide.
Then there exists a homeomorphism $\varphi : G_1 \to G_2$ satisfying $g_1 \circ \varphi = g_2$ which may not preserve the orientations of the edges.
If $\varphi$ does not preserve the orientations, perform type (I) moves.
\end{proof}

\begin{proposition}
\label{prop:related_by_five_types_of_moves}
Any two pants decompositions with dual graphs are related by a sequence of type (I)--(V) moves and their inverses.
\end{proposition}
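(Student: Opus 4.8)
The plan is to separate the problem into two independent pieces: the combinatorics of the underlying pants decompositions, and the bookkeeping of the dual graph. For the first piece I would invoke the classical connectivity result of Hatcher and Thurston \cite{hatcher-thurston}: any two pants decompositions of $S$ are related by a finite sequence of elementary moves, each one supported on an embedded subsurface homeomorphic to a four-holed sphere (replacing a curve $c$ lying between two distinct pairs of pants) or to a one-holed torus (replacing a curve $c$ whose two sides lie on the same pair of pants). These are exactly the supports of the type (V) moves defined above. Since the dual-graph part of the problem for a \emph{fixed} pants decomposition is already settled by Lemma \ref{lem:moves_do_not_change_pants_decomposition}, it suffices to realize a single elementary move at the level of pants decompositions with dual graphs.

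So fix one elementary move carrying $C$ to $C'$, supported on a subsurface $\Sigma \subset S$ (a four-holed sphere or a one-holed torus), and let $(g, G)$ be a dual graph dual to $C$. First I would normalize the dual graph inside $\Sigma$. Restricting to $\Sigma$ and using isotopies of $S$ that fix $\partial \Sigma$ pointwise, the same argument as in the proof of Lemma \ref{lem:moves_do_not_change_pants_decomposition} --- adjust each tripod by a type (III) vertex move, match the graphs near the interior pants curve by a type (II) Dehn twist, and correct edge orientations by type (I) moves --- shows that $G|_{\Sigma}$ can be brought, without changing $C$ or the part of $G$ outside $\Sigma$, into the standard configuration appearing on the left-hand side of Figure \ref{fig:elementary_moves_with_dual_graphs}. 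Once $G$ is in this standard form on $\Sigma$, the type (V) move applies directly and produces a dual graph $(g', G')$ dual to $C'$ that agrees with $(g,G)$ outside $\Sigma$.

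Chaining these realizations along the Hatcher--Thurston sequence, I obtain from $(C_1, G_1)$ a pants decomposition with dual graph $(C_2, \widetilde{G}_2)$ whose underlying pants decomposition is exactly the target $C_2$, reached by a sequence of type (I)--(V) moves and their inverses. Finally, since $\widetilde{G}_2$ and the prescribed $G_2$ are both dual to $C_2$, Lemma \ref{lem:moves_do_not_change_pants_decomposition} connects them by a sequence of type (I)--(IV) moves, completing the chain from $(C_1, G_1)$ to $(C_2, G_2)$.

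I expect the main obstacle to be the local normalization step. One must check that the support $\Sigma$ of each elementary move can always be taken embedded and that the restricted dual graph genuinely matches the boundary data of the figure, so that the type (V) move can be applied with the rest of $G$ left untouched. This requires a boundary-relative version of Lemma \ref{lem:moves_do_not_change_pants_decomposition}, whose proof is identical to the global one but must be stated carefully because the moves are now required to be supported in $\mathrm{Int}(\Sigma)$; verifying that the two isotopy classes of tripod on a pair of pants are still interchanged by a single type (III) move under boundary-pointwise isotopy is the one delicate point.
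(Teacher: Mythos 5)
Your proposal is correct and follows essentially the same route as the paper: invoke Hatcher--Thurston to reduce to a single elementary move, use type (I)--(III) moves to normalize the dual graph on the supporting four-holed sphere or one-holed torus so that a type (V) move realizes the elementary move, and finish with Lemma \ref{lem:moves_do_not_change_pants_decomposition} once the pants decompositions agree. Your extra attention to the boundary-relative normalization is a reasonable refinement of a point the paper treats only implicitly, but it does not change the argument.
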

\begin{proof}
Hatcher and Thurston \cite{hatcher-thurston} showed that any two pants decompositions are related by \emph{elementary moves}.
Two pants decompositions $C$ and $C'$ are related by an elementary move if $C'$ is obtained from $C$ by replacing a pants curve $c \in C$ 
by a curve $c'$ which intersects $c$ minimally and does not intersect other pants curves.
(Locally, elementary moves are indicated in Figure \ref{fig:elementary_moves_with_dual_graphs} by forgetting dual graphs.)
There are infinitely many ways to perform elementary moves with respect to a given interior pants curve, 
but there is a unique elementary move for a pants decomposition with a dual graph.

Let $(C_1,G_1)$ and $(C_2,G_2)$ be two pants decompositions with dual graphs.
By Hatcher and Thurston's result, $C_1$ and $C_2$ are related by a sequence of elementary moves of pants decompositions.
Therefore we only have to show that an elementary move can be realized by a sequence of (I)--(V) moves.
We assume that $C_1$ and $C_2$ are related by one elementary move which exchange a pants curve $c \in C_1$ to $c' \in C_2$.
When $c$ (and $c'$) is on a one-holed torus, then we can change the dual graph $G_1$ by type (II) and (III) moves so that $G_1$ does not intersect $c'$. 
Then a type (V) move exchanges $c$ to $c'$. 
When $c$ (and $c'$) is on a four-holed sphere, then we can also change the dual graph $G_1$ by type (II) and (III) so that a type (V) move exchanges $c$ to $c'$.
Therefore we assume that $C_1$ is equal to $C_2$.
Applying Lemma \ref{lem:moves_do_not_change_pants_decomposition}, we conclude that $(C_1,G_1)$ is related by a sequence of (I)--(V) moves to $(C_2,G_2)$.
\end{proof}

\begin{figure}
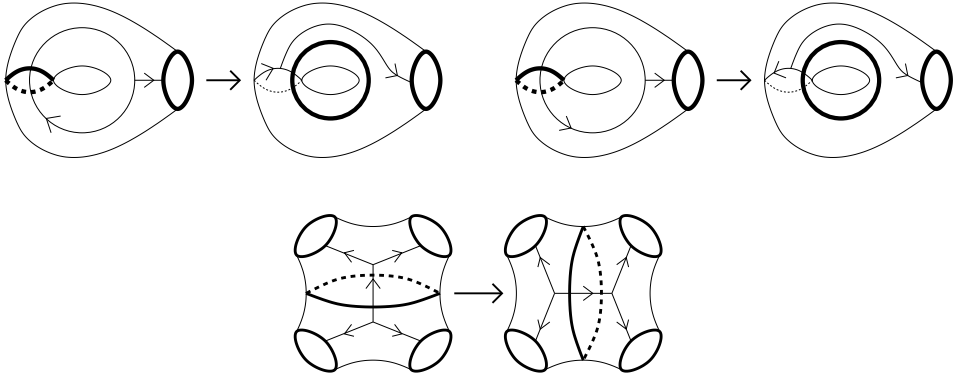

\input{el_mv_I_with_gr_1.pstex_t}
\hspace{20pt}
\input{el_mv_I_with_gr_2.pstex_t}

\vspace{20pt}
\input{el_mv_II_with_gr.pstex_t}
\caption{Elementary moves of pants decompositions with dual graphs.}
\label{fig:elementary_moves_with_dual_graphs}
\end{figure}

\begin{figure}
\input{el_mv_I_with_gr_1_cv.pstex_t}
\caption{}
\label{fig:elementary_moves_with_dual_graphs_for_one_holed_in_a_cover}
\end{figure}

\subsection{Transformation formula}
By Proposition \ref{prop:related_by_five_types_of_moves}, we only have to describe the transformation formulae for type (I)--(V) moves.
Since type (IV) move results in a permutation of parameters, we give formulae for type (I), (II), (III) and (V) moves.

\begin{proposition}[Type (I) move]
Let $e_1, \dots, e_5$ and $t_1$ be the eigenvalue and twist parameters as in the left of Figure \ref{fig:oriengation_change_of_dual_graphs_of_four_holed}.
When we replace the orientation of the edge corresponding to $(e_1, t_1)$, 
then the new parameters $(e_1', t_1')$ are given by
\begin{equation}
\label{eq:reverse_the_orientation_of_an_edge}
(e_1', t_1') = (e_1^{-1}, \frac{e_1 e_2 -e_3}{e_1 e_3 - e_2} \frac{e_1 e_5 - e_4}{e_1 e_4 - e_5} t_1^{-1}). 
\end{equation}
\end{proposition}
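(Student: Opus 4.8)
The plan is to express both the old twist parameter $t_1$ and the reversed one $t_1'$ as explicit expressions in the fixed points by means of Lemma~\ref{lem:two_fixed_pts_with_one_point_behavior}, and then to show that the product $t_1 t_1'$ collapses to the claimed function of the eigenvalue parameters. The equality $e_1'=e_1^{-1}$ is immediate, since reversing the orientation of the interior edge replaces the chosen eigenvalue of $\rho(\gamma_1)$ by its inverse. For the twist, recall from \S\ref{subsec:definition_of_twist_parameters} that $M(\sqrt{-t_1};x_1,y_1)$ is the matrix fixing the two fixed points $x_1,y_1$ of the interior curve and sending $x_2$ to $x_5$, where (following Figure~\ref{fig:twist_parameter}) $x_2$ is the marked fixed point carried by the pair of pants at the tail of the edge and $x_5$ the one carried by the pair of pants at the head. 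Lemma~\ref{lem:two_fixed_pts_with_one_point_behavior} then gives
\[
-t_1=\frac{(x_1-x_2)(y_1-x_5)}{(x_1-x_5)(y_1-x_2)}.
\]

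Next I would identify the fixed points that replace $x_2$ and $x_5$ after the move. Reversing the edge keeps the fixed points of $c_1$ equal to $x_1,y_1$ (now $x_1$ is attached to $e_1'=e_1^{-1}$), but it interchanges the roles of the two pairs of pants. Because the cyclic order of the edges at each trivalent vertex is fixed by the orientation of $S$, the marked fixed point of the new tail pants (the former head pants) is the one on the boundary edge that follows the interior edge counterclockwise, namely $x_4$, while that of the new head pants is the one preceding it, namely $x_3$; hence $M(\sqrt{-t_1'};x_1,y_1)$ sends $x_4$ to $x_3$ and
\[
-t_1'=\frac{(x_1-x_4)(y_1-x_3)}{(x_1-x_3)(y_1-x_4)}.
\]
Multiplying these two identities and regrouping the six factors according to whether they involve only the data of $P_1$ (the points $x_1,x_2,x_3,y_1$) or only that of $P_2$ (the points $x_1,x_4,x_5,y_1$) yields
\[
t_1 t_1'=\frac{(x_1-x_2)(y_1-x_3)}{(y_1-x_2)(x_1-x_3)}\cdot\frac{(y_1-x_5)(x_1-x_4)}{(x_1-x_5)(y_1-x_4)}.
\]

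Each of the two factors is a cross ratio of four points lying in a single pair of pants, hence $\PSLC$-invariant, so I would evaluate the first after normalizing $(x_1,x_2,x_3)=(\infty,1,0)$. With this normalization $\rho(\gamma_1)$ is the upper triangular matrix in (\ref{eq:gamma_1_2_3_for_four_holed}), from which $y_1=\frac{e_1(e_3-e_1e_2)}{e_2(1-e_1^2)}$ is read off at once (equivalently, use (\ref{eq:other_fixed_point})); a short computation gives $\frac{(x_1-x_2)(y_1-x_3)}{(y_1-x_2)(x_1-x_3)}=\frac{e_1(e_3-e_1e_2)}{e_1e_3-e_2}$. The second factor is the \emph{same} cross ratio for $P_2$, whose generators satisfy $\gamma_1^{-1}\gamma_4\gamma_5=1$ with eigenvalues $(e_1^{-1},e_4,e_5)$ and fixed points $(x_1,x_4,x_5)$, so it is obtained from the first by the substitution $(e_1,e_2,e_3)\mapsto(e_1^{-1},e_4,e_5)$ and equals $\frac{e_1e_5-e_4}{e_1(e_5-e_1e_4)}$. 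Multiplying the two, the factors $e_1^{\pm1}$ cancel and the two sign changes $e_3-e_1e_2=-(e_1e_2-e_3)$, $e_5-e_1e_4=-(e_1e_4-e_5)$ combine, giving $t_1 t_1'=\frac{e_1e_2-e_3}{e_1e_3-e_2}\frac{e_1e_5-e_4}{e_1e_4-e_5}$, which together with $e_1'=e_1^{-1}$ is exactly (\ref{eq:reverse_the_orientation_of_an_edge}). The one genuinely delicate step, and the one I would check against the figures, is the combinatorial identification in the second paragraph of which boundary fixed points become the marked points of the two pairs of pants once the orientation is reversed; the remainder is cross-ratio bookkeeping and a single pair-of-pants calculation.
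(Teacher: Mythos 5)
Your proof is correct, and it reaches (\ref{eq:reverse_the_orientation_of_an_edge}) by a route that genuinely differs from the paper's. The paper's own argument stays entirely within the five marked fixed points $x_1,\dots,x_5$: it applies the pre-derived formula (\ref{eq:t1_from_x1_x2_x3_x5}) of Lemma \ref{lem:twist_parameter_from_fixed_points} to the reversed configuration, with the substitution $(e_1,e_2,e_3,x_2,x_3,x_5)\mapsto(e_1^{-1},e_4,e_5,x_4,x_5,x_3)$ --- exactly the combinatorial identification you flag as the delicate step, so your reading of the figure is the intended one --- and then uses the cross-ratio identity $[x_3:x_5:x_1:x_4]=1-[x_3:x_4:x_1:x_5]$ together with (\ref{eq:t1_from_x1_x3_x4_x5}) to trade the remaining cross ratio for $t_1^{-1}$. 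You instead go back to the definition via Lemma \ref{lem:two_fixed_pts_with_one_point_behavior}, introduce the second fixed point $y_1$ of the interior curve, and observe that the product $t_1t_1'$ factors into two cross ratios each supported on a single pair of pants; each factor is then a one-pants computation, and your values ($y_1$ and $\frac{y_1}{y_1-1}=\frac{e_1(e_3-e_1e_2)}{e_1e_3-e_2}$ under the normalization $(x_1,x_2,x_3)=(\infty,1,0)$, and the substitution $(e_1,e_2,e_3)\mapsto(e_1^{-1},e_4,e_5)$ for the second factor) all check out. What your approach buys is a structural explanation of why the answer splits as a $P_1$-factor times a $P_2$-factor times $t_1^{-1}$, without invoking Lemma \ref{lem:twist_parameter_from_fixed_points} at all; what the paper's buys is brevity, since that lemma has performed the elimination of $y_1$ once and for all and is reused for the other moves in this section.
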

\begin{remark}
If one of the edges corresponding to $e_2, \dots, e_5$ is inward oriented, say $e_i$, replace the parameter in the formula by its inverse ${e_i}^{-1}$.
We also apply this rule to other formulae in this section.
When we reverse the orientation of the edge on a one-holed torus, apply the formula by taking a cover as in Figure \ref{fig:twist_for_one_holed}.
\end{remark}
\begin{remark}
There are two choices of eigenvalues in the formula, but it is natural to take ${e'_1} = {e_1}^{-1}$ 
to be consistent with the notation of \S \ref{subsec:definition_of_twist_parameters}.
\end{remark}
\begin{proof}
Let $x_1, \dots, x_5$ be the fixed points as in the Lemma \ref{lem:twist_parameter_from_fixed_points}.
By (\ref{eq:t1_from_x1_x2_x3_x5}), we have
\[
\begin{split}
t_1' &= -1 +\frac{e_4(1-e_1^{-2})}{e_4-{e_1}^{-1}e_5}[x_3:x_5:x_1:x_4] \\
&= -1 +\frac{e_4(1-e_1^{-2})}{e_4-{e_1}^{-1}e_5}(1-[x_3:x_4:x_1:x_5]).
\end{split}
\]
By (\ref{eq:t1_from_x1_x3_x4_x5}), we have
\[
\begin{split}
t_1' &= -1 +\frac{e_4(1-e_1^{-2})}{e_4-{e_1}^{-1}e_5}\left(1- \frac{e_4-e_1 e_5}{e_4(1-{e_1}^2)}\left(1-\frac{e_1(e_1e_2-e_3)}{t_1(e_1e_3-e_2)}\right) \right) \\
&= \frac{e_1 e_2 - e_3}{e_1 e_3 - e_2} \frac{e_1 e_5 - e_4}{e_1 e_4 - e_5} \frac{1}{t_1}. \\
\end{split}
\]
\end{proof}

\begin{proposition}[Type (II) move]
\label{prop:right_Dehn_twist}
Let $(C_1,G_1)$ be a pants decomposition with a dual graph.
Apply a right Dehn twist at an edge of $G_1$ whose eigenvalue and twist parameter are $(e_i,t_i)$.
Then the new parameters are given by 
\begin{equation}
\label{eq:right_Dehn_twist}
(e_i',t_i') = (e_i, {e_i}^2 t_i).
\end{equation}
\end{proposition}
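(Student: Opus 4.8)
The plan is to reduce everything to the defining property of the twist parameter from \S\ref{subsec:definition_of_twist_parameters}: $t_i$ is characterized (up to the sign absorbed into $\sqrt{-t_i}$) as the unique parameter for which the matrix $M(\sqrt{-t_i};x_i,y_i)$ of (\ref{eq:gluing_map}), which fixes the two fixed points $x_i,y_i$ of $\rho(\gamma_i)$, carries $x_2$ to $x_5$. A right Dehn twist along the pants curve $c_i$ leaves the pants decomposition and the curve itself unchanged, so the eigenvalue parameter is unaffected and $e_i' = e_i$; the only thing that changes is how the dual graph crosses $c_i$, hence the relative position of the fixed points recorded across the edge.

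First I would record the elementary multiplicativity identity for matrices sharing a pair of fixed points: from the diagonalized form (\ref{eq:two_fixed_points_case}) one has $M(e;x,y)\,M(e';x,y) = M(ee';x,y)$, since both factors are conjugated by the common matrix $\begin{pmatrix} x & y \\ 1 & 1 \end{pmatrix}$ into commuting diagonal matrices whose eigenvalues multiply. In particular every gluing map along $f_i$ commutes with the monodromy $\rho(\gamma_i) = M(e_i;x_i,y_i)$, and composing them simply multiplies the eigenvalue entries.

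Next I would identify the geometric effect of the twist. Inserting one right-handed full twist along $c_i$ conjugates the generators on the far pair of pants by $\gamma_i$, so the fixed point $x_5$ recorded on the far side is replaced by its image $\rho(\gamma_i)\,x_5 = M(e_i;x_i,y_i)\,x_5$ (the counterclockwise boundary convention of Figure \ref{fig:generators_for_pants} fixes the sign of the exponent). The new gluing map must therefore send $x_2$ to this displaced point, so it equals $M(e_i;x_i,y_i)$ composed on the left with the old gluing map $M(\sqrt{-t_i};x_i,y_i)$. By the multiplicativity identity this composite is $M(e_i\sqrt{-t_i};x_i,y_i) = M(\sqrt{-e_i^2 t_i};x_i,y_i)$, and comparing with the defining property of $t_i'$ yields $\sqrt{-t_i'} = e_i\sqrt{-t_i}$, that is $t_i' = e_i^2 t_i$.

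The main obstacle is purely bookkeeping: pinning down that a \emph{right} (rather than left) Dehn twist post-composes with $\rho(\gamma_i)^{+1}$ rather than $\rho(\gamma_i)^{-1}$, so that the answer is $e_i^2 t_i$ and not $e_i^{-2} t_i$. This requires carefully matching the handedness of the twist against the orientation of the edge and the counterclockwise convention for the $\gamma_j$. The cleanest safeguard is to cross-check against the explicit four-holed-sphere traces (\ref{eq:four_holed_tr_2}) and (\ref{eq:four_holed_tr_3}): the relation $\gamma_2\gamma_4 \sim \gamma_3^{-1}\gamma_1\gamma_5^{-1}\gamma_1^{-1}$ noted after (\ref{eq:four_holed_tr_3}) exhibits $\gamma_2\gamma_4$ as the image of $\gamma_3^{-1}\gamma_5^{-1}$ under the right Dehn twist along $\gamma_1$, and one verifies that passing between those two trace formulae is exactly the substitution $t_1 \mapsto e_1^2 t_1$, confirming both the coefficient and its sign.
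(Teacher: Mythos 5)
Your proof is correct, but it takes a different formal route from the paper. The paper explicitly remarks that the Dehn twist formula ``follows from the definition of the twist parameter'' --- which is exactly the argument you give --- but then chooses instead to derive it as a corollary of the half-twist formula of Proposition \ref{prop:half_twists}: a right Dehn twist is two successive half-twists, and applying (\ref{eq:right_half_twist}) twice yields $t'_1 = \left( -\frac{e_1(e_1e_2-e_3)}{e_1e_3-e_2} \right)\left( -\frac{e_1(e_1e_3-e_2)}{e_1e_2-e_3} \right) t_1 = {e_1}^2 t_1$, where the half-twist formula itself comes from the cross-ratio expressions (\ref{eq:t1_from_x1_x3_x4_x5}) and (\ref{eq:t1_from_x1_x2_x4_x5}) of Lemma \ref{lem:twist_parameter_from_fixed_points}. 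Your route works directly from the defining property (\ref{eq:gluing_map}) together with the multiplicativity $M(e;x,y)M(e';x,y)=M(ee';x,y)$, which is clean and conceptual; its one vulnerability is the handedness bookkeeping (whether the far-side fixed point is displaced by $\rho(\gamma_i)$ or $\rho(\gamma_i)^{-1}$), which you correctly flag and resolve by cross-checking against the trace identities (\ref{eq:four_holed_tr_2}) and (\ref{eq:four_holed_tr_3}) --- and indeed the substitution $t_1\mapsto e_1^2t_1$ carries one into the other. What the paper's route buys is that the sign and handedness are already pinned down in the half-twist computation (which is needed anyway for type (III) moves), so the Dehn twist formula comes for free with no further convention-checking; what your route buys is independence from the cross-ratio machinery and a transparent geometric explanation of why the factor is exactly ${e_i}^2$.
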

This follows from the definition of the twist parameter, but we prove as a corollary of Proposition \ref{prop:half_twists}.

The type (III) move consists of three `half twists' of the dual graph (Figure \ref{fig:change_of_dual_graphs_of_four_holed}).
\begin{figure}
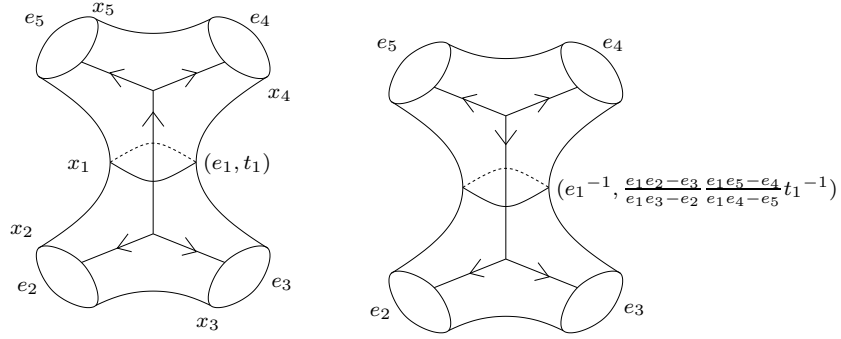

\input{4_holed_with_gr_1.pstex_t}
\hspace{30pt}
\input{4_holed_with_gr_2.pstex_t}
\caption{(I) Reverse orientation.}
\label{fig:oriengation_change_of_dual_graphs_of_four_holed}
\end{figure}

\begin{figure}
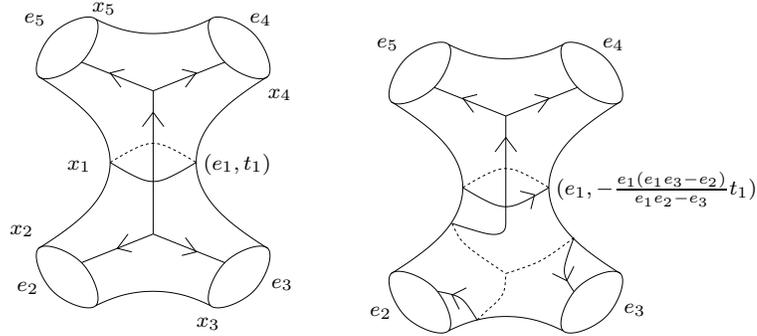

\input{4_holed_with_gr_1.pstex_t}
\hspace{30pt}
\input{4_holed_with_gr_3.pstex_t}
\caption{(III) Vertex move.}
\label{fig:change_of_dual_graphs_of_four_holed}
\end{figure}

\begin{proposition}[A part of Type (III) move]
\label{prop:half_twists}
Let $e_1, \dots, e_5$ and $t_1$ be the eigenvalue and twist parameters as in the left of Figure \ref{fig:change_of_dual_graphs_of_four_holed}.
After doing a half twist, the new parameters $(e'_1,t'_1)$ are given by
\begin{equation}
\label{eq:right_half_twist}
(e'_1,t'_1) = (e_1, -\frac{e_1(e_1e_3-e_2)}{e_1e_2-e_3} t_1).
\end{equation}
\end{proposition}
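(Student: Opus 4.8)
The plan is to reuse the strategy of the Type (I) proof: reduce everything to the fixed point parameters and read off the twist parameter from the cross-ratio formulae of Lemma \ref{lem:twist_parameter_from_fixed_points}. Since the coefficient in the claimed formula involves only $e_1,e_2,e_3$ and not $e_4,e_5$, I expect the move to be supported near the trivalent vertex of the pair of pants $P_1$ on the $e_2,e_3$ side, so I would work with (\ref{eq:t1_from_x1_x2_x3_x5}), which expresses $t_1$ through $[x_5:x_3:x_1:x_2]$ and involves exactly these data.

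First I would pin down the effect of the right half-twist of the central edge on the fixed point parameters. The half-twist is supported in a neighborhood of the vertex of $P_1$ and changes the tripod on $P_1$ to the other isotopy class on that side; combinatorially it interchanges the cyclic positions of the two boundary legs carrying $e_2$ and $e_3$, while the leg carrying $e_1$ is wound once halfway around the pants curve $c_1$. In terms of generators this replaces $\gamma_2,\gamma_3$ by suitable conjugates involving $\gamma_1$ and keeps the eigenvalue $e_1$ on the central edge, so $e_1'=e_1$. I would make this precise by writing down the new generators from Figure \ref{fig:change_of_dual_graphs_of_four_holed} and identifying the relabelled fixed points, using $\rho(\gamma_1)=M(e_1;x_1,y_1)$ and (\ref{eq:other_fixed_point}) to express $y_1$ in terms of $x_1,x_2,x_3$.

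With the relabelling in hand, I would substitute the new fixed points into (\ref{eq:t1_from_x1_x2_x3_x5}) and simplify. The half-winding by $\rho(\gamma_1)$ is what produces the overall factor $-e_1$, while the interchange of the two legs produces the ratio $\tfrac{e_1e_3-e_2}{e_1e_2-e_3}$; concretely, expanding the target via (\ref{eq:t1_from_x1_x2_x3_x5}) gives $t_1'=\tfrac{e_1(e_1e_3-e_2)}{e_1e_2-e_3}+\tfrac{e_1e_2(1-e_1^2)}{e_1e_2-e_3}[x_5:x_3:x_1:x_2]$, and the plan is to match this against the relabelled cross-ratio expression using the standard permutation identities for the cross ratio together with (\ref{eq:other_fixed_point}). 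I expect the main obstacle to be precisely this first step: getting the topological bookkeeping of the half-twist exactly right, including the sign and the single power of $e_1$ coming from the half-winding. Once the relabelling is correct, the remainder is routine cross-ratio algebra.

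Finally, Proposition \ref{prop:right_Dehn_twist} will follow as a corollary, since a right Dehn twist along $c_1$ is realized by two successive half-twists of the central edge, which together restore the original cyclic order of the two legs. Applying (\ref{eq:right_half_twist}) twice, the second time with the roles of $e_2$ and $e_3$ interchanged, telescopes to $\left(-\tfrac{e_1(e_1e_3-e_2)}{e_1e_2-e_3}\right)\left(-\tfrac{e_1(e_1e_2-e_3)}{e_1e_3-e_2}\right)t_1=e_1^2t_1$, giving the claimed formula $t_1\mapsto e_1^2 t_1$ and confirming that the leg interchange is exactly what is needed for the two factors to cancel.
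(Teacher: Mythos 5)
Your skeleton is the paper's: reduce everything to Lemma \ref{lem:twist_parameter_from_fixed_points}, interpret the half-twist as an exchange of the roles of the two legs at the vertex of $P_1$, and deduce Proposition \ref{prop:right_Dehn_twist} by composing two half-twists (your telescoping product is exactly the paper's computation). The gap is that the one step carrying all the content --- expressing the new twist parameter in terms of the \emph{old} fixed points --- is deferred as ``topological bookkeeping,'' and your choice of working formula makes that step unavoidable and delicate. After the move the new generating set of $\pi_1(P_1)$ is of the form $(\gamma_1,\gamma_2\gamma_3\gamma_2^{-1},\gamma_2)$ (or similar): the leg landing in the old $x_3$-slot carries the fixed point $x_2$ itself, but the leg landing in the old $x_2$-slot carries a \emph{translate} such as $\rho(\gamma_2)x_3$, not $x_3$. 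Since (\ref{eq:t1_from_x1_x2_x3_x5}) uses the cross-ratio $[x_5:x_3:x_1:x_2]$, which involves both $P_1$-side slots, your route cannot avoid computing that translate explicitly via (\ref{eq:pants_representation}); and the naive substitution that simply swaps $(e_2,x_2)\leftrightarrow(e_3,x_3)$ in (\ref{eq:t1_from_x1_x2_x3_x5}) yields an expression with a nonzero constant term in $t_1$, hence not even proportional to $t_1$. So your heuristic that the answer factors as ``$-e_1$ from a half-winding times a ratio from the interchange'' is not a derivation, and taken literally it would lead you astray.

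The paper's proof sidesteps all of this by choosing the other pair of formulae. Equations (\ref{eq:t1_from_x1_x2_x4_x5}) and (\ref{eq:t1_from_x1_x3_x4_x5}) compute $t_1^{-1}$ from $x_1,x_4,x_5$ together with exactly \emph{one} of the two $P_1$-side fixed points. Applying (\ref{eq:t1_from_x1_x3_x4_x5}) to the post-move configuration therefore requires only the fixed point sitting in the new $x_3$-slot, which is literally the old $x_2$; this gives
\[
{t'_1}^{-1}=-\frac{e_1e_2-e_3}{e_1(e_1e_3-e_2)}\left(-1+\frac{e_4(1-e_1^2)}{e_4-e_1e_5}[x_2:x_4:x_1:x_5]\right),
\]
and the parenthesis equals $t_1^{-1}$ by (\ref{eq:t1_from_x1_x2_x4_x5}). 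No translate of any fixed point is ever computed, and the factor $-e_1$ you attribute to a half-winding appears simply as the ratio of the prefactors of the two formulae. I recommend recasting your argument along these lines; your derivation of (\ref{eq:right_Dehn_twist}) by applying (\ref{eq:right_half_twist}) twice can then stand exactly as written.
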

The transformation formula for a type (III) move is obtained by applying (\ref{eq:right_half_twist}) to three edges adjacent to the vertex.
\begin{proof}[Proof of Propositions \ref{prop:right_Dehn_twist} and \ref{prop:half_twists}.]
By (\ref{eq:t1_from_x1_x3_x4_x5}) and (\ref{eq:t1_from_x1_x2_x4_x5}), we have
\[
{t_1'}^{-1} = 
-\frac{e_1e_2-e_3}{e_1(e_1e_3-e_2)} \left( -1 +\frac{e_4(1-e_1^2)}{e_4-e_1e_5}[x_2:x_4:x_1:x_5] \right)=-\frac{e_1e_2-e_3}{e_1(e_1e_3-e_2)} t_1^{-1}.
\]
This shows that (\ref{eq:right_half_twist}) holds.
Applying (\ref{eq:right_half_twist}) twice, we obtain a proof of (\ref{eq:right_Dehn_twist}): 
\[
t'_1 = \left( -\frac{e_1(e_1e_2-e_3)}{e_1e_3-e_2} \right) \left( -\frac{e_1(e_1e_3-e_2)}{e_1e_2-e_3} \right) t_1 = {e_1}^2 t_1.
\]
\end{proof}

The trace function of the new pants curve in type (V) move has already been given by (\ref{eq:four_holed_tr_1}) for a four-holed sphere case and 
(\ref{eq:one_holed_tr}) for a one-holed torus case.
Thus the new eigenvalue parameter is one of $\frac{\tr\pm\sqrt{\tr^2-4}}{2}$ where $\tr$ is the trace function of the new pants curve.
Differently from type (I)--(IV) moves, there is no natural choice of the eigenvalue parameter for the new pants curve in type (V) move.
But if we fix one of them, the twist parameter is uniquely determined.
Sometimes we fix the new eigenvalue parameter by $\frac{\tr-\sqrt{\tr^2-4}}{2}$ where we take the branch of the square root satisfying $0 \leq \arg (\sqrt{\tr^2-4}) < \pi$
for convenience.

\begin{proposition}[Type (V) move for four-holed sphere]
\label{prop:type_V_move_four_holed}
Define eigenvalue parameters $e_1, \dots, e_{13}$ and $t_1, \dots, t_5$ as in the left of Figure \ref{fig:peripheral}.
Let $e'_1$ be the eigenvalue parameter corresponding to the new pants curve and $t'_1, \dots, t'_5$ the twist parameters as in the right of Figure \ref{fig:peripheral}.
Then $e'_1$ is one of the solutions of 
\begin{equation}
\label{eq:equation_for_eigenvalues}
x^2 - \tr(\rho(\gamma_3 \gamma_4)) x + 1 = 0
\end{equation}
where $\tr(\rho(\gamma_3 \gamma_4))$ is given by (\ref{eq:four_holed_tr_1}).
The twist parameters $t'_1, \dots, t'_5$ are given by 
\begin{subequations}
\begin{equation}
\label{eq:T1_for_four_holed_a}
\begin{split}
t'_1 &= \frac{1}{{e'_1}+{e'_1}^{-1}} \frac{e'_1 e_5 e_2}{(e_5e_2-e'_1)(e'_1e_2-e_5)} \frac{e'_1 e_3 e_4}{(e_3e_4-e'_1)(e'_1e_3-e_4)} \\
&\biggl( ({e'_1}-{e'_1}^{-1}) \bigl(  e'_1 \tr(\rho(\gamma_3 \gamma_5) - {e'_1}^{-1} \tr(\rho(\gamma_2 \gamma_4) \bigr)\\
&-({e'_1}+{e'_1}^{-1}) \bigl( (e_2+e_2^{-1})(e_3+e_3^{-1})+(e_4+e_4^{-1})(e_5+e_5^{-1}) \bigr) \\
&+2 \bigl( (e_3+e_3^{-1})(e_5+e_5^{-1})+(e_2+e_2^{-1})(e_4+e_4^{-1}) \bigr) \biggr) \\
\end{split}
\end{equation}
or
\begin{equation}
\label{eq:T1_for_four_holed_b}
{t'_1}  = \frac{
({e_4}{e'_1}-{e_3})( {e_3}{e_5}(-({e_1}{e_3}-{e_2}){t_1}+{e_1}({e_1}{e_2-{e_3}}) ){e'_1} \\
+{e_1}({e_1}{e_3}-{e_2}){t_1}-({e_1}{e_2}-{e_3}))
}
{
({e_3}{e'_1}-{e_4})(( {e_1}({e_1}{e_3}-{e_2}){t_1}-({e_1}{e_2}-{e_3})){e'_1}
+{e_3}{e_5}(-({e_1}{e_3}-{e_2}){t_1}+{e_1}({e_1}{e_2}-{e_3}))
},
\end{equation}
\end{subequations}
and
\begin{align}
\label{eq:T2_for_four_holed}
t'_2 &=  
\frac{({e_1}{e_2}-{e_3})({e_1}{e_3}-{e_2})({t_1}+1)}
{({e_2}{e_3}-{e_1})({e_1}{e_3}-{e_2}){t_1}+(1-{e_1}{e_2}{e_3})({e_1}{e_2}-{e_3})} 
\frac{{e_2}{e'_1}-{e_5}}{{e_2}{e_5}-{e'_1}} {t_2}, \\
\label{eq:T3_for_four_holed}
t'_3 &=
\frac{({e_2}{e_3}-{e_1})(({e_1}{e_3}-{e_2})({e_1}{e_4}-{e_5}){t_1}+({e_1}{e_2}-{e_3})({e_1}{e_5}-{e_4}))}
{({e_1}{e_3}-{e_2})(({e_2}{e_3}-{e_1})({e_1}{e_4}-{e_5}){t_1}+(1-{e_1}{e_2}{e_3})({e_1}{e_5}-{e_4}))} {t_3}, \\
\label{eq:T4_for_four_holed}
t'_4 &=
\frac{({e_1}{e_3}-{e_2})({e_1}{e_4}-{e_5}){t_1}+({e_1}{e_2}-{e_3})({e_1}{e_5}-{e_4})}
{({e_1}{e_3}-{e_2})({e_4}{e_5}-{e_1}){t_1}+({e_1}{e_2}-{e_3})(1-{e_1}{e_4}{e_5})} 
\frac{{e_3}{e'_1}-{e_4}}{1-{e_3}{e_4}{e'_1}} {t_4}, \\
\label{eq:T5_for_four_holed}
t'_5 &=
\frac{({e_1}{e_5}-{e_4})({e_1}{e_4}{e_5}-1)({t_1}+1)}
{({e_1}-{e_4}{e_5})({e_1}{e_4}-{e_5}){t_1}+({e_1}{e_4}{e_5}-1)({e_1}{e_5}-{e_4})} {t_5}. 
\end{align}
where $\tr(\rho(\gamma_3 \gamma_5))$ and $\tr(\rho(\gamma_2 \gamma_4))$ are given in (\ref{eq:four_holed_tr_3}) and (\ref{eq:four_holed_tr_2}) respectively.
Any other parameters does not change under the move.
\end{proposition}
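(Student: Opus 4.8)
The plan is to reduce everything to the single four-holed sphere computations of \S\ref{subsec:four_holed_sphere}, namely the fixed-point transformation formulae (\ref{eq:x4})--(\ref{eq:x3}) and the cross-ratio expressions for the twist parameter in Lemma \ref{lem:twist_parameter_from_fixed_points}. The eigenvalue equation (\ref{eq:equation_for_eigenvalues}) is immediate: the new pants curve $c'_1$ is freely homotopic to $\gamma_3 \gamma_4$, so $\rho(c'_1)$ has the same trace as $\rho(\gamma_3\gamma_4)$, and $e'_1$ is therefore an eigenvalue of $\rho(\gamma_3\gamma_4)$, i.e.\ a root of $x^2 - \tr(\rho(\gamma_3\gamma_4))x + 1 = 0$ with $\tr(\rho(\gamma_3\gamma_4))$ supplied by (\ref{eq:four_holed_tr_1}). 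Fixing one such root determines $e'_1$, and its eigenvectors supply the fixed-point parameters of the new interior edge.

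For the new twist parameter $t'_1$ I would justify both displayed expressions. After the move, the four-holed sphere is cut by $c'_1$ into two pairs of pants with boundary triples $(c'_1, c_2, c_5)$ and $(c'_1, c_3, c_4)$, since $\gamma_2(\gamma_3\gamma_4)\gamma_5 = 1$. Applying the trace-to-twist formula (\ref{eq:twist_from_equation_0_4}) verbatim to this new configuration, with $e_1 \mapsto e'_1$ and the boundary eigenvalues relabelled according to the two new pants, and substituting the already-computed cross traces (\ref{eq:four_holed_tr_2}) and (\ref{eq:four_holed_tr_3}), yields (\ref{eq:T1_for_four_holed_a}). For (\ref{eq:T1_for_four_holed_b}) I would instead track fixed points directly: express the fixed-point parameters of the new decomposition in terms of the old ones via (\ref{eq:x4})--(\ref{eq:x3}), then feed them into the cross-ratio formula for $t'_1$ coming from Lemma \ref{lem:twist_parameter_from_fixed_points}; the resulting rational function simplifies to (\ref{eq:T1_for_four_holed_b}).

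The boundary twist parameters $t'_2, \dots, t'_5$ require the observation that although each $c_i$ $(i=2,\dots,5)$ is unchanged as a curve, so that its own fixed points are fixed, the pair of pants on the four-holed-sphere side containing $c_i$ is replaced. By Lemma \ref{lem:twist_parameter_from_fixed_points}, each $t_i$ is, up to an explicit eigenvalue factor, a cross ratio of four fixed points: two from the adjacent pants \emph{outside} the four-holed sphere (governed by the untouched parameters among $e_6, \dots, e_{13}$ and the untouched exterior fixed points) and two from \emph{inside}. Under the move only the interior neighbour changes, the fixed point formerly coming from $c_1$ now coming from $c'_1$, while the companion boundary fixed point is recomputed by (\ref{eq:x4})--(\ref{eq:x3}). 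Forming the ratio $t'_i/t_i$ as a quotient of two cross ratios, the contributions of the exterior points cancel and the remainder simplifies to the correction factors of (\ref{eq:T2_for_four_holed})--(\ref{eq:T5_for_four_holed}); the factor carrying $t_1$ records the old interior structure and the factor carrying $e'_1$ the new one. Finally, I would note that every parameter outside the four-holed sphere is unaffected because the move is supported inside $S'$.

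The main obstacle will be bookkeeping rather than any single hard idea: correctly matching each boundary curve to the generator and the fixed point that enter its twist parameter before and after the move, tracking edge orientations so that the convention ``replace $e_i$ by $e_i^{-1}$ for inward edges'' is applied consistently, and selecting among (\ref{eq:t1_from_x1_x2_x3_x5})--(\ref{eq:t1_from_x1_x3_x4_x5}) the representation in which the unchanged exterior fixed points drop out cleanly. Once this combinatorial dictionary is fixed, the remaining simplifications are routine rational-function manipulations of exactly the type carried out in \S\ref{subsec:four_holed_sphere}, and I would relegate them to direct computation.
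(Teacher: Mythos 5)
Your proposal matches the paper's proof in essence: the eigenvalue equation comes from $c'_1 \simeq \gamma_3\gamma_4$, formula (\ref{eq:T1_for_four_holed_a}) is obtained by applying (\ref{eq:twist_from_equation_0_4}) to the new decomposition, and the remaining twist parameters are computed by feeding the fixed points obtained from (\ref{eq:x4})--(\ref{eq:x3}) and the fixed point of $\rho(\gamma_3\gamma_4)$ into the cross-ratio formulae of Lemma \ref{lem:twist_parameter_from_fixed_points}, exactly as the paper does (with the final rational simplifications carried out by computer algebra). The only cosmetic difference is that you organize the boundary computations as ratios $t'_i/t_i$ in which the exterior data drops out, whereas the paper substitutes the exterior fixed points directly and observes the same independence at the end.
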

\begin{proof}
We use the notation of \S \ref{subsec:four_holed_sphere}. 
In particular, $\gamma_1, \dots, \gamma_5$ are as in Figure \ref{fig:twist_parameter} and $x_1=\infty, x_2 = 1, x_3 = 0, x_4,x_5 \in \mathbb{C}P^1$.
We let $x_6, \dots, x_{13}$ be the fixed point parameters corresponding to the edges with the eigenvalue parameters $e_6, \dots, e_{13}$ respectively.  
In the following computations, we used the computer algebra system Maxima.

The first statement is trivial. 
(\ref{eq:T1_for_four_holed_a}) follows from (\ref{eq:twist_from_equation_0_4}) by replacing the variables.
To compute the twist parameters, we use the formulae (\ref{eq:t1_from_x1_x2_x3_x5})--(\ref{eq:t1_from_x1_x3_x4_x5}).
To apply the formulae, we need to know the fixed points of the matrix $\rho(\gamma_3 \gamma_4)$.
We let $\rho(\gamma_3\gamma_4) = \begin{pmatrix} a & b \\ c & d \end{pmatrix}$.
From (\ref{eq:gamma_1_2_3_for_four_holed}) and (\ref{eq:gamma_4_for_four_holed}), we have 
\begin{equation}
\label{eq:fixed_point_tmp_1}
\begin{split}
a =& 
\biggl ({e_1}({e_1}{e_3}-{e_2})({e_4}{e_5}({e_1}{e_4}-{e_5})+({e_1}{e_5}-{e_4})){t_1} \\
&+({e_1}{e_2}-{e_3})({e_1}{e_4}{e_5}({e_1}{e_5}-{e_4})-({e_1}{e_5}-{e_4}))  \biggr)
/ \biggl(
{({e_1}^2-1){e_3}({e_1}{e_3}-{e_2}){e_4}{e_5}{t_1}} \biggr), \\
b =& 
\biggl(
{e_1}
(({e_1}{e_3}-{e_2})({e_1}{e_4}-{e_5}){t_1}+({e_1}{e_2}-{e_3})({e_1}{e_5}-{e_4}))(({e_1}{e_3}-{e_2})({e_4}{e_5}-{e_1}){t_1} \\
&+(1-{e_1}{e_4}{e_5})({e_1}{e_2}-{e_3})) 
\biggr) / \biggl(
{e_2}{e_3}{e_4}{e_5}({e_1}^2-1)^{2}({e_1}{e_3}-{e_2}){t_1}
\biggr). \\
\end{split}
\end{equation}
Let $x'_1$ (resp. $y'_1$) be the fixed point of $\rho(\gamma_3 \gamma_4)$ corresponding to $e'_1$ (resp. ${e'_1}^{-1}$).
By (\ref{eq:two_fixed_points_case}), we have 
\[
a = \frac{{e'_1}{x'_1}-{e'_1}^{-1}{y'_1}}{x'_1-y'_1}, \quad 
b = -\frac{({e'_1}-{e'_1}^{-1}) x'_1 y'_1}{x'_1-y'_1}, \quad 
c = \frac{{e'_1}-{e'_1}^{-1}}{x'_1-y'_1}, \quad 
c = \frac{-{e'_1}{y'_1}+{e'_1}^{-1}{x'_1}}{x'_1-y'_1}.
\]
So we have
\begin{equation}
\label{eq:fixed_point_tmp_2}
\frac{b}{{e'_1}-a} = \frac{-({e'_1}-{e'_1}^{-1}) x'_1 y'_1}{{e'_1}(x'_1-y'_1)-({e'_1}{x'_1}-{e'_1}^{-1}{y'_1})} = x'_1.
\end{equation}
From (\ref{eq:fixed_point_tmp_1}) and (\ref{eq:fixed_point_tmp_2}), we can compute $x'_1$.
Applying the formulae (\ref{eq:x4})--(\ref{eq:x5}), we can also compute the fixed point parameters $x_6, \dots, x_{13}$.
Apply (\ref{eq:t1_from_x1_x2_x4_x5}) to compute $t'_1$, $t'_3$, $t'_4$ and $t'_5$, we have
\begin{align}
\label{eq:tmp_for_T1_for_four_holed}
{t'_1}^{-1} &= -1 + \frac{e_3 (1-{e'_1}^2)}{e_3-{e'_1} e_4} [x_5:x_3:{x'_1}:x_4]  \\
\label{eq:tmp_for_T3_for_four_holed}
{t'_3}^{-1} &= -1 +\frac{e_8(1-{e_3}^2)}{e_8-e_3e_9}[x_4:x_8:x_3:x_9] \\
\label{eq:tmp_for_T4_for_four_holed}
{t'_4}^{-1} &= -1 +\frac{e_{10}(1-{e_4}^2)}{e_{10}-e_4e_{11}}[x'_1:x_{10}:x_4:x_{11}] \\
\label{eq:tmp_for_T5_for_four_holed}
{t'_5}^{-1} &= -1 +\frac{e_{12}(1-{e_5}^2)}{e_{12}-e_5e_{13}}[x_2:x_{12}:x_5:x_{13}]
\end{align}
Substitute $x_2, \dots, x_{13}$ and $x'_1$ into (\ref{eq:tmp_for_T1_for_four_holed})--(\ref{eq:tmp_for_T5_for_four_holed}), 
we obtain (\ref{eq:T1_for_four_holed_b}) and (\ref{eq:T3_for_four_holed})--(\ref{eq:T5_for_four_holed}).
We apply (\ref{eq:t1_from_x1_x3_x4_x5}) to compute $t'_2$, then we have
\begin{align}
\label{eq:tmp_for_T2_for_four_holed}
{t'_2}^{-1} = -\frac{e_2e_5-e'_1}{e_2(e_2e'_1-e_5)} \left( -1 +\frac{e_6(1-{e_2}^2)}{e_6-e_2e_7}[x_5:x_6:x_2:x_7] \right). 
\end{align}
Substitute $x_2$, $x_6$, $x_2$, $x_7$ into (\ref{eq:tmp_for_T2_for_four_holed}), we obtain (\ref{eq:T2_for_four_holed}).
\end{proof}

\begin{figure}
\input{periph_4_before.pstex_t}
\hspace{20pt}
\input{periph_4_after.pstex_t}
\caption{}
\label{fig:peripheral}
\end{figure}

\begin{proposition}[Type (V) move for one-holed trous]
\label{prop:type_V_move_one_holed}
Define eigenvalue parameters $e_1,\dots, e_4$ and $t_1, t_2$ as in the left of Figure \ref{fig:peripheral_one_holed}.
Let $e'_1$ be the eigenvalue parameter corresponding to the new pants curve and $t'_1, t'_2$ the twist parameters as in the right of Figure \ref{fig:peripheral_one_holed}.
Then $e'_1$ is one of the solutions of 
\begin{equation}
\label{eq:equation_for_eigenvalues_one_holed}
x^2 - \tr(\rho(\beta_1)) x + 1 = 0
\end{equation}
where $\tr(\rho(\beta_1))$ is given by (\ref{eq:one_holed_tr}).
The twist parameters $t'_1$ and $t'_2$ are given by 
\begin{align}
\label{eq:T1_for_one_holed_torus}
t'_1 &= \frac{- e_2}{({e'_1}^{2}-{e_2})^2} \left( (e_1+{e_1}^{-1})  - e'_1 \tr(\rho({\alpha_1}^{-1} \beta_1)) \right)^2,  \\
\label{eq:T2_for_one_holed_torus}
{t'_2} &=
\frac
{{e_2}({e_2}-{e_1}^{2})(\sqrt{-{e_2}{t_1}}{e'_1}+{t_1})}
{(1-{e_1}^{2}{e_2})\sqrt{-{e_2}{t_1}}{e'_1}+({e_2}-{e_1}^{2}){e_2}{t_1}}
{t_2},
\end{align}
where 
\[
\tr(\rho({\alpha_1}^{-1} \beta_1))=
-\frac{({e_1}^{2}-{e_2}){t_1}+{e_1}^{2}(1-{e_1}^{2}{e_2})}{{e_1}({e_1}^2-1)\sqrt{-{e_2}{t_1}}}.
\]
\end{proposition}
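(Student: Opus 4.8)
The plan is to follow the same strategy as in the proof of Proposition~\ref{prop:type_V_move_four_holed}, working directly with the explicit matrix generators computed in \S\ref{subsec:once_punctured}. Recall that the twist parameter on a one-holed torus is defined by passing to the four-holed sphere cover of Figure~\ref{fig:twist_for_one_holed}; under the elementary move (a clockwise rotation by $\pi/2$) the old interior pants curve $\alpha_1$, with eigenvalue $e_1$, and the new interior pants curve $\beta_1$, with eigenvalue $e'_1$, exchange roles, while the boundary curve keeps its eigenvalue $e_2$. Thus after the move the pair of generators adapted to the new pants decomposition is $(\beta_1, \alpha_1^{-1})$ in place of $(\alpha_1, \beta_1)$.

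The statement about $e'_1$ is immediate: the new interior pants curve is represented by $\rho(\beta_1)$, whose eigenvalues are the roots of its characteristic polynomial. Since $\rho(\beta_1) \in \SLC$ has determinant $1$, these roots satisfy $x^2 - \tr(\rho(\beta_1))x + 1 = 0$, and $\tr(\rho(\beta_1))$ is precisely the quantity already computed in (\ref{eq:one_holed_tr}); this gives (\ref{eq:equation_for_eigenvalues_one_holed}).

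For $t'_1$ I would simply re-apply the one-holed torus twist formula (\ref{eq:twist_from_equation_1_1}) in the relabeled coordinates. Taking the new interior curve to be $\beta_1$ with eigenvalue $e'_1$ and the new dual generator to be $\alpha_1^{-1}$, formula (\ref{eq:twist_from_equation_1_1}) becomes
\[
t'_1 = \frac{-e_2}{({e'_1}^2 - e_2)^2}\bigl( \tr(\rho(\alpha_1^{-1})) - e'_1\,\tr(\rho(\beta_1\alpha_1^{-1})) \bigr)^2,
\]
and since $\tr(\rho(\alpha_1^{-1})) = e_1 + e_1^{-1}$ and $\tr(\rho(\beta_1\alpha_1^{-1})) = \tr(\rho(\alpha_1^{-1}\beta_1))$ this is exactly (\ref{eq:T1_for_one_holed_torus}). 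The auxiliary trace $\tr(\rho(\alpha_1^{-1}\beta_1))$ is obtained by multiplying the matrix $\rho(\alpha_1)^{-1}$ of \S\ref{subsec:once_punctured} with $\rho(\beta_1)$ from (\ref{eq:rho_beta_1_for_one_holed}) and reading off the trace, giving the displayed expression.

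The remaining formula (\ref{eq:T2_for_one_holed_torus}) for $t'_2$, the twist at the boundary curve after the move, is the one genuinely new computation. Here I would lift to the four-holed sphere cover and apply one of the cross ratio expressions of Lemma~\ref{lem:twist_parameter_from_fixed_points}: locate the fixed point parameters attached to the boundary edge and to the lift of the new interior curve $\beta_1$, determine how the move repositions them (the fixed point associated to $\beta_1$ being read off from (\ref{eq:rho_beta_1_for_one_holed}) together with (\ref{eq:other_fixed_point})), and substitute into the appropriate cross ratio. The main obstacle is precisely this bookkeeping: in the covering four-holed sphere one must track which lift of each fixed point occurs and the orientation of each edge, so that the cross ratio is formed with the correct arguments; once the fixed points are pinned down the remaining simplification is a routine, if lengthy, rational computation, best handled by computer algebra as in Proposition~\ref{prop:type_V_move_four_holed}.
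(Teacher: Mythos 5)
Your proposal is correct and follows essentially the same route as the paper: the equation for $e'_1$ comes from the characteristic polynomial of $\rho(\beta_1)$, the formula for $t'_1$ is obtained by re-applying (\ref{eq:twist_from_equation_1_1}) with relabeled variables, and $t'_2$ is computed by lifting to the four-holed sphere cover, locating the relevant fixed point parameters (including the fixed point of $\rho(\beta_1)^{-1}$ read off from (\ref{eq:rho_beta_1_for_one_holed})), and substituting into one of the cross-ratio formulae of Lemma \ref{lem:twist_parameter_from_fixed_points} -- specifically (\ref{eq:t1_from_x1_x3_x4_x5}) in the paper -- with the final simplification done by computer algebra, exactly as in the paper's proof.
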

\begin{proof}
We use the notation of \S \ref{subsec:once_punctured}. 
In particular, $\alpha_1$ and $\beta_1$ are as in Figure \ref{fig:once_punctured} and $x_1=\infty, x_2 = 0, x_3 = 1, x_4,x_5 \in \mathbb{C}P^1$.
In the following computations, we also used Maxima. 

The first statement is trivial. 
(\ref{eq:T1_for_one_holed_torus}) follows from (\ref{eq:twist_from_equation_1_1}) by replacing the variables.
To compute the twist parameters, we use the formulae (\ref{eq:t1_from_x1_x2_x3_x5})--(\ref{eq:t1_from_x1_x3_x4_x5}) 
in a cover of the one-holed torus as in Figure \ref{fig:peripheral_one_holed_cover}.
We let $x_6 $ $x_7$ be the fixed point parameters corresponding to the edges with the eigenvalue parameters $e_3$ and $e_4$ respectively.  
Applying the formulae (\ref{eq:x4})--(\ref{eq:x5}), we can compute the fixed point parameters $x_6$ and $x_7$.
To apply (\ref{eq:t1_from_x1_x2_x3_x5})--(\ref{eq:t1_from_x1_x3_x4_x5}), we need to know the fixed points of the matrix $\rho(\beta_1)^{-1}$.
Let $x'_1$ be the fixed point of $\rho(\beta_1)^{-1}$ corresponding to $e'_1$.
We let $\rho(\beta_1)^{-1} = \begin{pmatrix} a & b \\ c & d \end{pmatrix}$.
Similarly as (\ref{eq:fixed_point_tmp_2}), we can compute $x'_1$ from (\ref{eq:rho_beta_1_for_one_holed}): 
\[
x'_1 = \frac{a-{e_1}^{-1}}{c}
= \frac{{e'_1}e_2-\sqrt{-e_2t_1}}{{e'_1}{e_2}}
= 1-\frac{\sqrt{-{e_2}{t_1}}}{{e'_1}{e_2}}
= 1+\frac{{t_1}}{\sqrt{-{e_2}\*{t_1}}\*{e'_1}}.
\]
Applying (\ref{eq:t1_from_x1_x3_x4_x5}) to compute ${t'_2}^{-1}$, we have 
\begin{equation}
\label{eq:tmp_for_T2_for_one_holed_torus}
{t'_2}^{-1} = -\frac{e_2e_1-{e_1}^{-1}}{e_2(e_2{e_1}^{-1}-e_1))} \left( -1 + \frac{e_3(1-{e_2}^2)}{e_3-e_2 e_4} [x'_1:x_6:x_2:x_7] \right).
\end{equation}
Substitute $x'_1$, $x_2=0$, $x_6$ and $x_7$ into (\ref{eq:tmp_for_T2_for_one_holed_torus}), we obtain (\ref{eq:T2_for_one_holed_torus}).
\end{proof}

\begin{figure}
\input{periph_one_holed.pstex_t}
\caption{}
\label{fig:peripheral_one_holed}
\end{figure}

\begin{figure}
\input{periph_one_holed_cv.pstex_t}
\caption{}
\label{fig:peripheral_one_holed_cover}
\end{figure}

\section{Developing map}
\label{sec:developing_map}
%
%


In this section, we recall the notion of a developing map. 
Then we reinterpret the eigenvalue and twist parameters in terms of the developing map.
In \S \ref{subsec:developing_map}, we define ideal triangulations of surfaces in a way which is suitable for our purpose, and define the developing map.
Then we study the developing map of a pair of pants in \S \ref{subsec:developoing_map_of_pants}.
These are completely determined by the complex parameters associated to the edges of the ideal triangulation.
In \S \ref{subsec:gluing_developing_maps}, we will discuss how two developing maps of a pair of pants are glued along a boundary curve depending on the twist parameter.

\subsection{Ideal triangulation and developing map}
\label{subsec:developing_map}
Let $S = S_{g,b}$ be an oriented surface of genus $g$ with $b$ boundary components.
We assume that $2g-2+b > 0$, then $S$ admits a hyperbolic metric with geodesic boundaries. 
We fix a hyperbolic metric on $S$.
An \emph{ideal triangle} is a triangle isometric to the convex hull in $\mathbb{H}^2$ of three distinct points on the ideal boundary 
(in other words, geodesic triangle with vertices at infinity). 
In this paper we say that a union of ideal triangles $\Delta_1 \cup \dots \cup \Delta_n$ on $S$ is an \emph{ideal triangulation} 
if the interiors of $\Delta_i$'s are disjoint. 
For example, the complement of a maximal geodesic lamination on $S$, after completion of each component by path metric, is an ideal triangulation. 
Because the volume of $S_{g,b}$ is equal to $2 \pi (2g-2+b)$ and the volume of an ideal triangle is $\pi$, there are at most $2(2g-2+b)$ ideal triangles.
In this paper, we mainly concern ideal triangulations obtained by spinning vertices around simple closed geodesics, 
e.g. Figure \ref{fig:pantscover} indicates an ideal triangulation of a pair of pants.

By the hyperbolic metric on $S$, we can regard the universal covering $\widetilde{S}$ of $S$ as a subset of the hyperbolic plane $\mathbb{H}^2$. 
Let $p : \widetilde{S} \to S$ be the covering map.
Each lift of $\Delta_i$ is an ideal triangle in $\mathbb{H}^2$, which is uniquely characterized by three points of $\partial \overline{\mathbb{H}^2}$. 
Fix a lift $\widetilde{\Delta}_i$ of $\Delta_i$ in $\mathbb{H}^2$.
Then other lifts are obtained by deck transformations of $\widetilde{\Delta}_i$.
For a representation $\rho : \pi_1(S) \to \PSLC$, 
we say that $D : \widetilde{S} \to \mathbb{H}^3$ is a \emph{developing map} of $\rho$ if it satisfies 
\begin{itemize}
\item $D(\gamma \widetilde{\Delta}_i) = \rho(\gamma) D(\widetilde{\Delta}_i)$ for any $\gamma \in \pi_1(S)$, and
\item $D|_{\widetilde{\Delta}_i} :\widetilde{\Delta}_i \to D(\widetilde{\Delta}_i)$ is an isometry into $\mathbb{H}^3$ for any $i$.
\end{itemize}
Especially boundary geodesics of $\widetilde{\Delta}_i$ are mapped by $D$ to geodesics in $\mathbb{H}^3$.
We do not assume that $D$ is continuous away from the lifts of $\Delta_1 \cup \dots \cup \Delta_n$.
A typical discontinuous points appear at simple closed geodesics spiralled by other geodesics (e.g. boundary curves of Figure \ref{fig:pantscover}).
In fact $D$ need not to be defined away from the lifts of $\Delta_1 \cup \dots \cup \Delta_n$ in our arguments.
Since a matrix which sends $D(\widetilde{\Delta})$ to $D(\gamma \widetilde{\Delta})$ is uniquely determined by Lemma \ref{lem:thrice_transitive}, 
$\rho$ is determined by $D$.
We say that $\rho$ is the \emph{holonomy representation} of $D$.

Consider a common geodesic of two ideal triangles $\Delta_1$ and $\Delta_2$ in $S$.
For a developing map $D$, we assign a complex parameter to such geodesic by using cross ratio.
Let $\widetilde{\Delta}_1$ and $\widetilde{\Delta}_2$ be a pair of lifts adjacent in $\mathbb{H}^2$.
Let $(v_0,v_1,v_2)$ be the ideal vertices of $\widetilde{\Delta}_1$ and $(v_3,v_1,v_0)$ be the ideal vertices of $\widetilde{\Delta}_2$ .
We assume that $(v_0, v_1, v_2)$ and $(v_3,v_1,v_0)$ are in the clockwise order on $\partial \overline{\mathbb{H}^2}$, see Figure \ref{fig:ori_conv}.
Since $D$ maps the ideal triangle spanned by $(v_0,v_1,v_2)$ isometrically into $\mathbb{H}^3$, 
$D$ naturally maps $v_i$ to a point $x_i \in \mathbb{C}P^1$.
In this situation, we simply denote $x_i = D(v_i)$ by abuse of notation.
We also define $x_3 \in \mathbb{C}P^1$ by $x_3 = D(v_3)$.
Now we define the complex parameter at the edge $(v_0, v_1)$ by the cross ratio defined in \S \ref{subsec:basic_facts_on_PSL}:
\[
[x_0:x_1:x_2:x_3]=\frac{x_3-x_0}{x_3-x_1}\frac{x_2-x_1}{x_2-x_0} \in (\mathbb{C} -\{0,1\}).
\]
This does not depends on the choice of the pair of lifts $\widetilde{\Delta}_1$ and $\widetilde{\Delta}_2$ 
since the cross ratio is invariant under the action of $\PSLC$.

\begin{figure}
\input{ori_conv.pstex_t}
\caption{}
\label{fig:ori_conv}
\end{figure}

If any two ideal triangles $\widetilde{\Delta}$ and $\widetilde{\Delta'}$ in $\widetilde{S}$ are related by a sequence of ideal triangles 
$\widetilde{\Delta} = \widetilde{\Delta}_{1}, \widetilde{\Delta}_{2}, \dots, \widetilde{\Delta}_{k} = \widetilde{\Delta'}$ in $\widetilde{S}$ such that 
$\widetilde{\Delta}_{i}$ and $\widetilde{\Delta}_{i+1}$ share a geodesic, 
we can reconstruct the developing map $D$ from these complex parameters.
(For example, if there exists at least one boundary component, 
we obtain such an ideal triangulation from a usual ideal triangulation of a surface of genus $g$ with $b$ punctures by spinning ideal vertices.
In this case, the number of complex parameters is equal to $6g-6+3b$.)
In fact, fix a lift $\widetilde{\Delta}$ of an ideal triangle on $\widetilde{S}$, and put an ideal triangle $D(\widetilde{\Delta})$ in $\mathbb{H}^3$ arbitrarily.
Then we can develop adjacent ideal triangles in $\mathbb{H}^3$ according to the complex parameters of the common edges.
Inductively we obtain a developing map $D:\widetilde{S} \to \mathbb{H}^3$.
If we replace the first ideal triangle by $D'(\widetilde{\Delta})$, we also obtain a developing map $D':\widetilde{S} \to \mathbb{H}^3$. 
Since there exists $A \in \PSLC$ such that  $D'(\widetilde{\Delta}) = A D(\widetilde{\Delta})$ by Lemma \ref{lem:thrice_transitive}, 
we conclude that  $D' = A D$ on each ideal triangle of $\widetilde{S}$ by construction. 
The holonomy representation of $D'$ is obtained from the holonomy representation of $D$ by conjugation by $A$.
Therefore we obtain a map from the parameter space $(\mathbb{C} \setminus \{0,1\} )^{N}$ to $X_{\PSLC}(S)$, 
where $N$ is the number of the edges each of which belongs to two ideal triangles of the ideal triangulation. 
We call these coordinates the \emph{exponential shear-bend coordinates} of the ideal triangulation (see \cite{bonahon96}).
By construction, this map is a rational map.
If the complex parameters are in a subfield $\mathbb{F} \subset \mathbb{C}$, then we obtain a $\mathrm{PGL}(2,\mathbb{F})$-representation.

\subsection{Developing map of a pair of pants}
\label{subsec:developoing_map_of_pants}
Let $P$ be a pair of pants. We fix a hyperbolic metric with geodesic boundary on $P$.
We put a tripod $G$ on $P$ whose endpoints of the legs are on different boundary components.
We denote the boundary components of $P$ by $c_1$, $c_2$, $c_3$ so that they are in counterclockwise order viewing from the tripod.
We give an orientation on each $c_i$.
Connect two boundary components by an arc along $G$ and spiral the arc toward the direction of $c_i$, 
we obtain a geodesic spiralling the boundary components of $P$ (Figure \ref{fig:pantscover}).
These geodesics give an ideal triangulation of $P$ by two ideal triangles. 
We will construct a developing map of a representation $\rho:\pi_1(P) \to \SLC$ satisfying the conditions of Proposition \ref{prop:pants_rep}, 
and compute the complex parameters of the three geodesics of the ideal triangulation.

Take a base point $*$ on the trivalent vertex of $G$ and let $\gamma_1$, $\gamma_2$, $\gamma_3$ be the elements of $\pi_1(P, *)$ as indicated in Figure \ref{fig:pantscover}. 
We regard the universal cover $\widetilde{P}$ of $P$ as a subset of $\mathbb{H}^2$.
We fix a lift of $G$ to $\mathbb{H}^2$ and denote it by $\widetilde{G}$. 
Then there exists a unique lift $\widetilde{c}_i$ of $c_i$ such that $\widetilde{G}$ touches $\widetilde{c}_i$. 
We denote the terminal endpoint of $\widetilde{c}_i$ by $v_i$ and the ideal triangle $(v_1,v_2,v_3)$ by $\widetilde{\Delta}_0$.
We take an ideal triangle $\widetilde{\Delta}_1=(v_1,v_3,\gamma_1 v_2)$, which shares the geodesic $(v_1,v_3)$ with $\widetilde{\Delta}_0$ (see Figure \ref{fig:pantscover}).
We denote the projection of $\widetilde{\Delta}_i$ to $P$ by $\Delta_i$ for $i=0,1$.
Then $\Delta_0$ and $\Delta_1$ are two ideal triangles of the ideal triangulation of $P$.
There exists a one-to-one correspondence between the lifts of $\Delta_0$ and $\pi_1(P,*)$ by deck transformations.

We let $e_i$ $(i=1,2,3)$ be one of the eigenvalues of $\rho(\gamma_i)$, and $x_i$ (resp. $y_i$) be the fixed point corresponding to $e_i$ (${e_i}^{-1}$), in other words, 
$x_i$ is the repelling fixed point ($y_i$ is the attractive fixed point) if $|e_i|>1$ (same as \S \ref{subsec:_matrices_of_pants_rep}).
Since we assume that $\rho$ satisfies the conditions of Proposition \ref{prop:pants_rep}, $x_i$'s are distinct.  
We construct a developing map $D: \widetilde{P} \to \mathbb{H}^3$ as follows.
Any lift of $\Delta_0$ (resp. $\Delta_1$) has the form $\gamma (v_1,v_2,v_3)$ (resp. $\gamma (v_1,v_3, \gamma_1 v_2)$) for some $\gamma \in \pi_1(P,*)$.
Thus we define a developing map $D : \widetilde{P} \to \mathbb{H}^3$ by 
$D( \gamma(v_1, v_2,v_3) ) = \rho(\gamma) (x_1,x_2,x_3)$ and $D(\gamma (v_1,v_3, \gamma_1 v_2)) = \rho(\gamma) (x_1,x_3, \rho(\gamma_1) x_2) $.
By construction, this gives a developing map of $\rho$.
Then the complex parameter of each edge has a simple form:
\begin{proposition}
\label{prop:shear_bend_parameter_of_a_pair_of_pants}
The complex parameter of the edge spiralling around both $c_i$ and $c_{i+1}$ is equal to $\displaystyle\frac{e_{i+2}}{e_i e_{i+1}}$.
\end{proposition}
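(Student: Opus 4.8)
The plan is to compute directly the cross ratio attached to each of the three edges of the ideal triangulation, exploiting the explicit developing map just constructed together with the $\PSLC$-invariance of the cross ratio. First I would treat the single edge whose two adjacent triangles are named explicitly, namely the geodesic $(v_1,v_3)$ shared by $\widetilde{\Delta}_0=(v_1,v_2,v_3)$ and $\widetilde{\Delta}_1=(v_1,v_3,\gamma_1 v_2)$. This edge spirals around $c_3$ and $c_1$, so it is the case $i=3$ of the statement, where the claimed value is $e_2/(e_3 e_1)$. The four points entering its cross ratio are the images of the endpoints of the shared edge, $x_1=D(v_1)$ and $x_3=D(v_3)$, together with the images of the two opposite vertices, $x_2=D(v_2)$ and $\rho(\gamma_1)x_2=D(\gamma_1 v_2)$.

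Since the cross ratio is invariant under $\PSLC$, I would normalize $(x_1,x_2,x_3)=(\infty,1,0)$, exactly the situation of \S\ref{subsec:four_holed_sphere}. With this normalization $\rho(\gamma_1)$ is the upper triangular matrix of (\ref{eq:gamma_1_2_3_for_four_holed}), so a one-line computation gives $\rho(\gamma_1)x_2=\rho(\gamma_1)\cdot 1=e_1 e_3/e_2$. Substituting $x_1=\infty$, $x_2=1$, $x_3=0$ and $\rho(\gamma_1)x_2=e_1 e_3/e_2$ into the cross ratio of \S\ref{subsec:developing_map}, ordered according to the orientation convention of Figure \ref{fig:ori_conv}, collapses after cancellation to $e_2/(e_3 e_1)$, as desired. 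Equivalently one may avoid coordinates and use the multiplier identity $\frac{\rho(\gamma_1)z-x_1}{\rho(\gamma_1)z-y_1}=e_1^{-2}\frac{z-x_1}{z-y_1}$, which linearizes the action of $\rho(\gamma_1)$ about its fixed points, together with the formula (\ref{eq:other_fixed_point}) for $y_1$.

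The remaining two edges do not require a separate computation: the whole construction --- the tripod $G$, the counterclockwise labelling $c_1,c_2,c_3$, the relation $\gamma_1\gamma_2\gamma_3=1$, and the resulting ideal triangulation --- is invariant under the cyclic relabelling $(c_1,c_2,c_3)\mapsto(c_2,c_3,c_1)$. Under this relabelling the edge around $c_i,c_{i+1}$ is carried to the edge around $c_{i+1},c_{i+2}$ and the eigenvalue parameters are permuted accordingly, so the value computed for $i=3$ propagates to $i=1$ and $i=2$. One checks that the claimed formula $e_{i+2}/(e_i e_{i+1})$ is itself consistent with this cyclic symmetry, which is precisely what makes the single computation suffice.

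The step I expect to be delicate is fixing the orientation convention correctly. The parameter assigned to an edge depends on which adjacent triangle is taken first and on the clockwise ordering of the ideal vertices in Figure \ref{fig:ori_conv}; interchanging the two opposite vertices replaces the parameter by its reciprocal. Thus the bookkeeping that matches the pair $(\widetilde{\Delta}_0,\widetilde{\Delta}_1)$ to the ordered quadruple $(v_0,v_1,v_2,v_3)$ of the convention must be carried out with care, since an orientation slip would produce $e_i e_{i+1}/e_{i+2}$ in place of $e_{i+2}/(e_i e_{i+1})$. Once this is pinned down, the algebra is the routine cancellation indicated above.
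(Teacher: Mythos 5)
Your proposal is correct and follows essentially the same route as the paper: normalize the three fixed points by $\PSLC$-invariance of the cross ratio (the paper uses $(x_1,x_2,x_3)=(0,\infty,1)$ and recomputes $\rho(\gamma_1)$ from Proposition \ref{prop:pants_rep}, while you use $(\infty,1,0)$ and quote (\ref{eq:gamma_1_2_3_for_four_holed}), which is the same matrix in a different gauge), evaluate $\rho(\gamma_1)\cdot x_2$, read off the cross ratio of the edge $(v_1,v_3)$, and dispose of the remaining two edges by the cyclic symmetry of the construction — exactly the paper's "by a similar calculation or symmetry." Your flagged concern about the ordering convention of Figure \ref{fig:ori_conv} is the right thing to worry about, and the paper resolves it by the explicit assignment $[x_3:x_1:\rho(\gamma_1)\cdot x_2:x_2]$, which matches your computation.
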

\begin{proof}
Since the cross ratio is invariant under the action of M\"obius transformation, we assume that $(x_1,x_2,x_3) = (0, \infty, 1)$.
By Proposition \ref{prop:pants_rep}, we have 
\[
\rho(\gamma_1) = \begin{pmatrix} e_1^{-1} & 0 \\ e_1^{-1} - e_2^{-1} {e_3} & e_1 \end{pmatrix}.
\]
Thus we have $\rho(\gamma_1) \cdot x_2  =\rho(\gamma_1) \cdot \infty = \frac{e_1^{-1}}{{e_1}^{-1}-{e_2}^{-1} e_3}$.
The complex parameter of the geodesic spiralling both $c_1$ and $c_3$ is 
\[
[x_3:x_1:\rho(\gamma_1)\cdot x_2:x_2] = 
[1: 0: \rho(\gamma_1) \cdot \infty: \infty] = \frac{\rho(\gamma_1) \cdot \infty}{\rho(\gamma_1) \cdot \infty-1}= \frac{e_2}{e_3 e_1}
\]
by definition. By a similar calculation or symmetry, we can show for other edges. 
\end{proof}


Conversely we can construct a $\PSLC$-representation from these complex parameters.
Since 
\[
\frac{e_2}{e_1 e_3} \frac{e_3}{e_1 e_2} = \frac{1}{{e_1}^2}
\]
we can recover the eigenvalue parameters $e_1, e_2, e_3$ up to sign.
For each $e_i$, there exists two choices of signs, but they depend on each other so that $\frac{e_{i+2}}{e_{i} e_{i+1}}$ is invariant.
This means that they are well-defined up to the action of $H^1(P;\mathbb{Z}/2\mathbb{Z})$ (see \S \ref{subsec:pants_rep_of_PSL2C}).
Therefore we can determine a unique $\PSLC$-representation up to conjugation from $\{\frac{e_{i+2}}{e_i e_{i+1}} \}_{i=1,2,3}$. 
(But there is no canonical choice of a lift to an $\SLC$-representation.)

\begin{figure}
\input{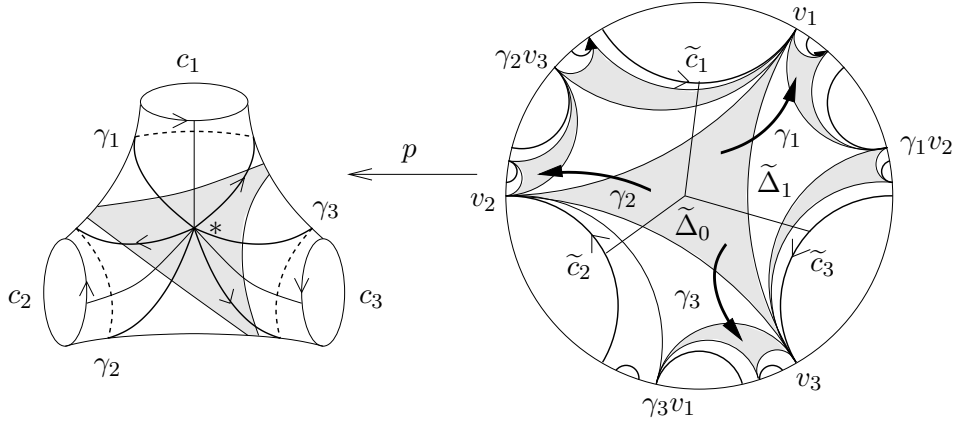}
\caption{
Three arrows emanating from $\widetilde{\Delta}_0$ mean deck transformations corresponding to $\gamma_1,\gamma_2,\gamma_3 \in \pi_1(P,*)$ respectively. 
}
\label{fig:pantscover}
\end{figure}

\subsection{Gluing developing maps of a pair of pants}
\label{subsec:gluing_developing_maps}
In this subsection, we will describe how two developing maps of a pair of pants are glued along their common boundary curve. 

We let $S$ be a four-holed sphere and decompose it into two pairs of pants $P$ and $P'$.
We denote the common interior pants curve by $c_1$ and boundary pants curves by $c_2,\dots, c_5$ as in Figure \ref{fig:glue_two_pairs}.
Take a dual graph $G$ of the pants decomposition and give an orientation on $c_i$ to the right from the dual edge.
We define $\gamma_1, \dots, \gamma_5 \in \pi_1(S)$ as in Figure \ref{fig:twist_parameter}.
We fix a lift $\widetilde{G}$ to $\mathbb{H}^2$, then there exists a unique lift $\widetilde{c}_i$ of $c_i$ such that $\widetilde{G}$ touches $\widetilde{c}_i$. 
We denote the terminal endpoint of $\widetilde{c}_i$ by $v_i$.
The projections of the ideal triangles $(v_1,v_2,v_3)$ and $(v_1, v_3, \gamma_1 v_2)$ form an ideal triangulation of $P$ 
and the projections of $(v_1,v_4,v_5)$ and $(v_1, \gamma_1 v_4, v_5)$ form an ideal triangulation of $P'$ (see Figure \ref{fig:glue_two_pants_covers}). 

Let $\rho$ (resp. $\rho'$) be a $\PSLC$-representation of $\pi_1(P)$ (resp. $\pi_1(P')$) satisfying the conditions of Proposition \ref{prop:pants_rep}.
We will glue the developing maps of $\rho$ and $\rho'$ along $\widetilde{c}_1$.
To glue them, we assume that $\rho(\gamma_1)$ and $\rho'(\gamma_1)$ are conjugate.
First we construct developing maps of $\rho$ and $\rho'$ as in \S \ref{subsec:developoing_map_of_pants}.
We let $e_i$ $(i=1,2,3)$ be one of the eigenvalues of $\rho(\gamma_i)$, and $x_i$ (resp. $y_i$) be the fixed point corresponding to $e_i$ (resp. ${e_i}^{-1}$).
By sending $(v_1,v_2,v_3)$ and $(v_1,v_3,\gamma_1 v_2)$ to $(x_1,x_2,x_3)$ and $(x_1, x_3, \rho(\gamma_1) x_2)$, and developing them by the action of $\rho(\pi_1(P))$, 
we obtain a developing map $D: \widetilde{P} \to \mathbb{H}^3$ of $\rho$.
For $\rho'$, we let $e_i$ $(i=1,4,5)$ be one of the eigenvalues of $\rho'(\gamma_i)$, and $x'_i$ (resp. $y'_i$) be the fixed point corresponding to $e_i$ (${e_i}^{-1}$).
To glue the representation $\rho'$ to $\rho$ along the curve $c_1$, we conjugate $\rho'$ to satisfy $(x'_1,y'_1) = (x_1,y_1)$. 
There are many ways to conjugate $\rho'$ to be $(x'_1,y'_1) = (x_1,y_1)$, but we fix one of them.
We denote the fixed points after the conjugation corresponding to $x'_4$ and $x'_5$ by $x_4$ and $x_5$ respectively.
By sending $(v_1,v_4,v_5)$ and $(v_1,v_4,\gamma_1 v_5)$ to $(x_1,x_4,x_5)$ and $(x_1, x_4, \rho(\gamma_1) x_5)$ and developing them, 
$D$ extends to a map $D: \widetilde{P} \cup_{\widetilde{c}_1} \widetilde{P'} \to \mathbb{H}^3$.
Consider two $\rho( \langle \gamma_1\rangle )$-invariant families of ideal triangles 
\[
\begin{split}
\{\rho(\gamma_1)^{i}(x_1,x_2,\rho(\gamma_1) x_2) \}_i &=  \{ (x_1, \rho(\gamma_1)^{i} x_2, \rho(\gamma_1)^{i+1}x_2) \}_i, \\
\{\rho(\gamma_1)^{i}(x_1,x_5,\rho(\gamma_1) x_5) \}_i &=  \{ (x_1, \rho(\gamma_1)^{i} x_5, \rho(\gamma_1)^{i+1}x_5) \}_i
\end{split}
\]
as indicated in Figure \ref{fig:glue_two_pants_covers}.
These two families of ideal triangles are related by a matrix $M(\sqrt{-t_1};x_1,y_1)$ for some $t_1 \in \mathbb{C}^*$ in $\rho( \langle \gamma_1\rangle )$-equivariant way.
Since $M(\sqrt{-t_1};x_1,y_1)$ sends $x_2$ to $x_5$, $t_1$ is the twist parameter defined in Section \ref{sec:twist_paramter}.

Since there exist infinite number of lifts of $c_1$, we have to carry out this process to all other lifts.
After that, we obtain a developing map for $S = P \cup P'$.

\begin{figure}
\input{glue_two_pants.pstex_t}
\caption{}
\label{fig:glue_two_pairs}
\end{figure}

\begin{figure}
\input{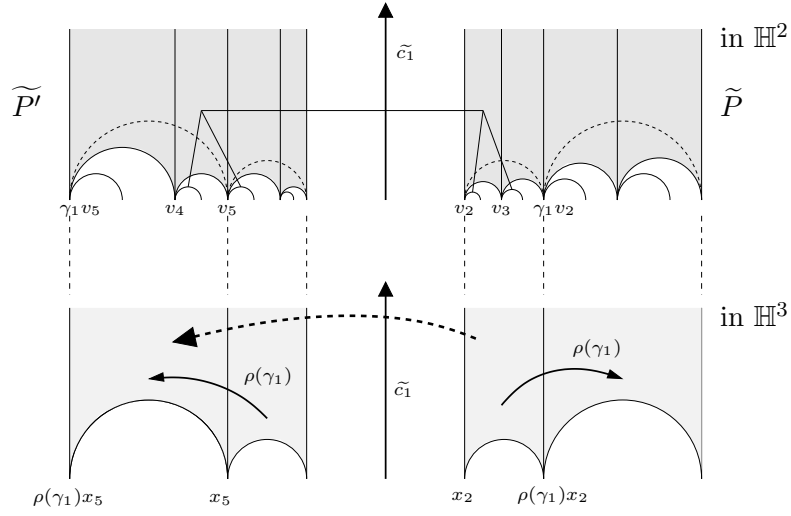}
\caption{The gluing of the developing maps.} 
\label{fig:glue_two_pants_covers}
\end{figure}

\begin{remark}
Bonahon gave a parametrization of $\mathrm{PSL}(2,\mathbb{C})$ representations of the fundamental group of a surface 
by using the \emph{shear-bend cocycle} of a maximal geodesic lamination $\lambda$ in Section 10 of \cite{bonahon96}.
Roughly speaking, our parametrization is a special case of the shear-bend cocycle
for the maximal geodesic lamination obtained from a pants decomposition by adjoining geodesics spiralling to the pants curves.
\end{remark}

\section{$\PSLR$-representations}
\label{sec:psl2r}
%
%


In this section, we restrict our attention to $\PSLR$-representations, in particular, Fuchsian representations.
Then we give a geometric interpretation of our twist parameters and compare them with the usual Fenchel-Nielsen twist parameters.

\subsection{$\PSLR$ and $\PGLR$}
Let $\PGLR = \mathrm{GL}(2; \mathbb{R}) / \mathbb{R}^*$ where $\mathbb{R}^*$ acts on $\mathrm{GL}(2; \mathbb{R})$ by scalar multiplication.
The action of $\PGLR$ on $\mathbb{C}P^1$ (resp. $\mathbb{H}^3$) preserves $\mathbb{R}P^1$ (resp. $\mathbb{H}^2 = \{(x,y,t) \mid  y=0, t > 0 \} \subset \mathbb{H}^3 $ ). 
$\PGLR$ consists of two connected components:
\[
\PGLR = \PSLR \cup \PSLR \cdot \begin{pmatrix} -1 & 0 \\ 0 & 1 \end{pmatrix}.
\]
One component consists of orientation preserving isometries of $\mathbb{H}^2$ and 
the other component consists of orientation reversing isometries.

Let $S=S_{g,b}$ be a surface.
We fix a pants decomposition $C$ and a dual graph $G$.
We restrict our parametrization to 
\begin{equation}
\label{eq:part_of_PGLR_rep}
\{ (e_i,t_i) \mid e_i, t_i  \in  \mathbb{R}^{*}, \quad (e_1,\dots, e_{3g-3+2b}) \in E(S,C) \}.
\end{equation}
In the explicit construction of representations in Section \ref{sec:matrix_generators}, if we place the first triple of the fixed points on $\mathbb{R}P^1$,
we obtain representations whose entries are in real numbers.
Thus we obtain $\PGLR$-representations from the parameter space.

To reduce the $\PGLR$-representation to a $\PSLR$-representation, it is convenient to use the dual graph $G$.
Let $\rho$ be a $\PGLR$-representation obtained from the parameter space (\ref{eq:part_of_PGLR_rep}).
Take a maximal tree $T$ in $G$, we fix a system of generators $\{\alpha_1, \dots \alpha_g, \beta_1, \dots, \beta_g. \delta_1, \dots, \delta_b\}$ 
of $\pi_1(S)$ as in \S \ref{subsec:dual_graph}.
By the construction of \S \ref{sec:matrix_generators}, we see that $\rho(\alpha_i)$ and $\rho(\delta_i)$ are in $\PSLR$.
So we only have to check that $\rho(\beta_i) \in \PSLR$ or not.
Recall that $\rho(\beta_i)$ is the matrix which sends a triple of points on $\mathbb{R}P^1$ to another triple of points on $\mathbb{R}P^1$.
Thus $\rho(\beta_i)$ preserves the order of the triples on $\mathbb{R}P^1$ if and only if $\rho(\beta_i) \in \PSLR$.
By assigning $+ 1$ or $-1$ depending on whether $\rho(\beta_i)$ preserves the order of the triples or not, we obtain a homomorphism $\pi_1(G) \to \mathbb{Z}/2\mathbb{Z}$.  
Since $\Hom(\pi_1(G), \mathbb{Z}/2\mathbb{Z}) \cong H^1(G; \mathbb{Z}/2\mathbb{Z})$, we can obtain an obstruction in $H^1(G; \mathbb{Z}/2\mathbb{Z})$, 
which vanishes if and only if $\rho$ reduces to a $\PSLR$-representation.  

There exist more $\PSLR$-representations other than coming from the parameter space (\ref{eq:part_of_PGLR_rep}).
In fact, the matrix corresponding to a pants curve is elliptic, then the eigenvalue parameter $e_i$ is not a real number but satisfies $|e_i| = 1$. 
It is known that: 
\begin{theorem}[Theorem 4.3 of \cite{goldman}]
\label{thm:goldman_SL2R_or_SU2}
Let $e_1$, $e_2$, $e_3$ be the eigenvalue parameters of a pair of pants.
Assume that $e_i$ is real or $|e_i| = 1$. 
The representation corresponding to $(e_1,e_2,e_3)$ is conjugate to a $\SLR$-representation if and only if either 
one of $|e_i+{e_i}^{-1}|$ is $\geq 2$ or $\kappa \geq 2$ where  
\[
\kappa = (e_1+{e_1}^{-1})^2 + (e_2+{e_2}^{-1})^2+(e_3+{e_3}^{-1})^2 - (e_1+{e_1}^{-1})(e_2+{e_2}^{-1})(e_3+{e_3}^{-1}) - 2.
\]
Otherwise the representation is conjugate to a $\mathrm{SU}(2)$-representation.
\end{theorem}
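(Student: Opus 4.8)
The plan is to reduce everything to the trace coordinates of the free group $\pi_1(P)=\langle\gamma_1,\gamma_2\rangle$ (with $\gamma_3=(\gamma_1\gamma_2)^{-1}$) and then apply a real-form dichotomy. Setting $A=\rho(\gamma_1)$, $B=\rho(\gamma_2)$, the triple $(t_1,t_2,t_3)=(\tr\rho(\gamma_1),\tr\rho(\gamma_2),\tr\rho(\gamma_3))=(e_i+e_i^{-1})_i$ is real by hypothesis, and an irreducible $\SLC$-representation of a free group of rank two is determined up to conjugacy by this triple. The Fricke identity gives $\tr[A,B]=t_1^2+t_2^2+t_3^2-t_1t_2t_3-2=\kappa$, and by the reducibility criterion already quoted (Corollary 1.2.2 of \cite{culler-shalen}) the representation is irreducible exactly when $\kappa\neq2$, so $\kappa=2$ is precisely the boundary value appearing in the statement.

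First I would establish the real-form dichotomy. Since every $\tr\rho(\gamma)$ is real, the complex conjugate representation $\bar\rho$ has the same character as $\rho$ and is therefore conjugate to it: there is $C\in\SLC$ with $\overline{\rho(\gamma)}=C\rho(\gamma)C^{-1}$ for all $\gamma$. Conjugating once more and invoking irreducibility (Schur's lemma) forces $\bar CC=\lambda I$ with $\lambda\in\mathbb{R}^{*}$, and rescaling $C$ shows only the sign of $\lambda$ is an invariant. This sign distinguishes the two real structures of $\SLC$: $\lambda>0$ allows one to conjugate $\rho$ into $\SLR$, while $\lambda<0$ allows one to conjugate it into $\mathrm{SU}(2)$ (the model structures are $C=I$, $\lambda=+1$ for $\SLR$, and $C=\left(\begin{smallmatrix}0&1\\-1&0\end{smallmatrix}\right)$, $\lambda=-1$ for $\mathrm{SU}(2)$). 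Thus $\rho$ is conjugate into exactly one of $\SLR$ and $\mathrm{SU}(2)$, and it suffices to characterize the $\mathrm{SU}(2)$ case.

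Next come the necessary conditions and their converse. Every element of $\mathrm{SU}(2)$ is elliptic with $|\tr|\le2$, and $[A,B]\in\mathrm{SU}(2)$ as well, so a $\mathrm{SU}(2)$-representation forces $|e_i+e_i^{-1}|\le2$ for all $i$ (each $e_i$ on the unit circle) and $\kappa\le2$; hence if some $|e_i+e_i^{-1}|\ge2$ (equivalently some $e_i$ real, since $e_i\neq\pm1$) or $\kappa\ge2$, then by the dichotomy $\rho$ is conjugate into $\SLR$. For the converse I would treat the all-elliptic case $|e_i+e_i^{-1}|<2$: diagonalize $A=\mathrm{diag}(e_1,e_1^{-1})$ with $e_1=e^{i\theta_1}$ and write a candidate $B=\begin{pmatrix}a&b\\-\bar b&\bar a\end{pmatrix}\in\mathrm{SU}(2)$. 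The trace data fix $\Re a=t_2/2$ and $\Re(e_1a)=t_3/2$, hence determine $a$, and the existence of a genuine solution reduces to the single inequality $|a|^2\le1$; a direct computation shows $|a|^2\le1$ is equivalent to $\kappa\le2$, with $|a|=1$ (i.e. $b=0$, reducible) occurring exactly at $\kappa=2$. Since the representation with a given real character is unique up to $\SLC$-conjugacy, exhibiting this $\mathrm{SU}(2)$ representative proves that all-elliptic together with $\kappa<2$ implies $\mathrm{SU}(2)$. Combining the two paragraphs yields the stated equivalence, the boundary values $|e_i+e_i^{-1}|=2$ and $\kappa=2$ being excluded by $e_i\neq\pm1$ and irreducibility so that $\ge$ may be used as written.

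The main obstacle is the final explicit computation tying the $\mathrm{SU}(2)$-realizability inequality $|a|^2\le1$ to the clean polynomial condition $\kappa\le2$: the algebra must be carried through carefully and then reconciled with the sign of $\lambda$ from the dichotomy, so that the abstract real-form invariant and the concrete inequality in $\kappa$ describe the same partition. One must also keep track of the degenerate boundary loci ($e_i=\pm1$, $\kappa=2$) to confirm that they are precisely the reducible or parabolic cases excluded by the standing hypotheses.
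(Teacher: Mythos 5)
The paper does not actually prove this statement---it is quoted verbatim as Theorem 4.3 of \cite{goldman}, so there is no internal proof to compare against. Your argument is a correct, self-contained proof, and it follows the same general strategy as Goldman's original one: reduce to the real character $(t_1,t_2,t_3)=(e_i+e_i^{-1})_i$ of the rank-two free group, use the Fricke identity $\tr[A,B]=\kappa$ together with the Culler--Shalen criterion to identify $\kappa=2$ with reducibility, and invoke the real-form dichotomy (an irreducible $\SLC$-representation with real character is conjugate into exactly one of $\SLR$ and $\mathrm{SU}(2)$, according to the sign of $\lambda$ in $\bar CC=\lambda I$). The two points you left as ``direct computation'' do check out. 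First, under the standing hypotheses $e_i\neq 0,\pm1$ and irreducibility, the boundary loci $|t_i|=2$ and $\kappa=2$ are indeed vacuous, and $|t_i|<2$ forces $|e_i|=1$, so $A$ is genuinely elliptic and diagonalizable as $\mathrm{diag}(e^{i\theta},e^{-i\theta})$. Second, writing $a=x+iy$ with $x=t_2/2$ and $y=(t_2\cos\theta-t_3)/(2\sin\theta)$, one gets
\[
4|a|^2\sin^2\theta \;=\; t_2^2\sin^2\theta+(t_2\cos\theta-t_3)^2 \;=\; t_2^2+t_3^2-t_1t_2t_3,
\]
so $|a|^2\le 1$ is equivalent to $t_1^2+t_2^2+t_3^2-t_1t_2t_3\le 4$, i.e.\ to $\kappa\le 2$, with equality exactly at the reducible locus $b=0$; this closes the sufficiency direction. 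The only stylistic caveat is that your appeal to ``all traces real'' should be justified by the remark that every trace function on $F_2$ is an integer polynomial in $\tr A$, $\tr B$, $\tr AB$, which is what makes the complex-conjugate representation have the same character; you implicitly use this but it is worth stating.
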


\subsection{Teichm\"uller space}
The set of all marked hyperbolic structures on $S$ is called the Teich\"uller space.
(If $S$ has boundary, we consider hyperbolic structures with geodesic boundary.)
Each marked hyperbolic structure gives rise to a discrete faithful representation of $\pi_1(S)$ into $\PSLR$.
So there exists a subset in our parametrization corresponding to the Teichm\"uller space.
We will show that the Teichm\"uller space is parametrized by the following subset of $E(S,C) \times \mathbb{C}^{3g-3+b}$:
\begin{equation}
\label{eq:representative_for_Teichmuller_space}
\{ (e_1, \dots, e_{3g-3+2b}, t_1, \dots, t_{3g-3+b}) \in \mathbb{R}^{6g-6+3b} \mid  e_i < -1, \  t_i > 0  \}.
\end{equation}
Actually we can construct an explicit developing map corresponding to each point of this subset.
There are other subsets producing Fuchsian representations but they are obtained from (\ref{eq:representative_for_Teichmuller_space}) 
by the action of $(\mathbb{Z}/2\mathbb{Z})^{3g-2+2b}$ and $H_1(G,\partial G; \mathbb{Z}/2\mathbb{Z})$.

First, we will show that the restriction of the $\PSLR$-representation obtained from (\ref{eq:representative_for_Teichmuller_space}) 
to each pair of pants is discrete and faithful: 
\begin{lemma}
\label{lem:discreteness_of_pslr_rep}
Let $P$ be a pair of pants and $e_1$, $e_2$ and $e_3$ are eigenvalue parameters on the boundary curve.
Assume $e_i \in \mathbb{R} \setminus \{0, \pm 1\}$.
If $e_1 e_2 e_3 < 0$, then the representation is discrete and faithful.
\end{lemma}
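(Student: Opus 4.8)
The plan is to read the representation off geometrically from the developing map constructed in \S\ref{subsec:developoing_map_of_pants} and to recognize it as the holonomy of a genuine hyperbolic structure on $P$. First I would record the elementary observations. Since each $e_i$ is real and different from $0,\pm 1$, the trace $e_i+e_i^{-1}$ of $\rho(\gamma_i)$ is real with $|e_i+e_i^{-1}|>2$, so every $\rho(\gamma_i)$ is hyperbolic with two distinct fixed points on $\mathbb{R}P^1$. Placing the first triple of fixed points on $\mathbb{R}P^1$ in the construction makes all matrix entries real, so $\rho$ takes values in $\PSLR$. Moreover $\rho$ is irreducible: by Proposition \ref{prop:domain} reducibility would force one of the relations $e_i=e_je_k$ or $e_1e_2e_3=1$, but the former gives $e_i^2=e_1e_2e_3<0$, impossible for real $e_i$, and the latter contradicts $e_1e_2e_3<0$.

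Second, I would compute the exponential shear-bend parameters of the spinning ideal triangulation of $P$. By Proposition \ref{prop:shear_bend_parameter_of_a_pair_of_pants} the parameter on the edge spiralling $c_i$ and $c_{i+1}$ equals $e_{i+2}/(e_ie_{i+1})$, whose sign equals $\mathrm{sign}(e_ie_{i+1}e_{i+2})=\mathrm{sign}(e_1e_2e_3)$. Hence the hypothesis $e_1e_2e_3<0$ is exactly the statement that all three shear parameters are negative real numbers. (These parameters are invariant under the action of $H^1(P;\mathbb{Z}/2\mathbb{Z})$, so this is a genuine condition on the $\PSLR$-representation and not merely on the chosen lift.)

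Third, I would argue that negativity of the real cross-ratio parameters makes the developing map planar and embedded. A real cross-ratio means the two ideal triangles sharing an edge develop into a common copy of $\mathbb{H}^2\subset\mathbb{H}^3$, and a negative cross-ratio means precisely that their two free vertices lie on opposite sides of the shared geodesic (the four ideal points separate in pairs on $\mathbb{R}P^1$). Therefore, developing the triangulation of $\widetilde{P}$ as in \S\ref{subsec:developoing_map_of_pants}, consecutive triangles are laid down on alternating sides without overlap, so $D$ maps $\widetilde{P}$ into $\mathbb{H}^2$ as a locally injective map onto a convex $\rho(\pi_1(P))$-invariant region bounded by the lifts of the three boundary geodesics. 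Consequently $D$ is the developing map of an (incomplete) hyperbolic structure on $P$ with geodesic boundary, and its holonomy $\rho$ is discrete and faithful.

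The main obstacle is the global embeddedness in the third step: local injectivity across each edge must be promoted to an honest embedding onto a convex set, which requires controlling the infinite fans of triangles that spiral toward each boundary geodesic and checking that the developed image does not wrap around in $\mathbb{H}^2$. I expect to handle this either by a direct convexity argument, showing that each newly developed triangle stays in the half-plane cut off by the previous shared geodesic so that the images nest monotonically, or, more economically, by invoking the standard fact (\cite{bonahon96} and the references therein) that real shear coordinates parametrize points of the Teichm\"uller space of $P$, whose holonomies are discrete and faithful; the computation of the second step places $\rho$ precisely in this locus.
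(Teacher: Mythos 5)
Your argument is correct, but it proves the lemma by a genuinely different route than the paper. The paper's proof normalizes the eigenvalues to $e_1,e_2,e_3<-1$ using the actions of $(\mathbb{Z}/2\mathbb{Z})^3$ and $H^1(P;\mathbb{Z}/2\mathbb{Z})$, places $(x_1,y_1,x_2)=(\infty,0,1)$, computes $x_3,y_2,y_3$ explicitly, verifies the ordering $0<1<y_2<x_3<y_3<e_1^2$, and exhibits an explicit fundamental domain bounded by eight geodesics; discreteness and faithfulness then follow from a Poincar\'e-polygon/ping-pong argument. You instead translate the hypothesis into the exponential shear-bend picture: by Proposition \ref{prop:shear_bend_parameter_of_a_pair_of_pants} the three edge parameters are $e_{i+2}/(e_ie_{i+1})$, whose common sign is $\mathrm{sign}(e_1e_2e_3)$, so the hypothesis says exactly that all three cross-ratio parameters are negative reals; with the paper's cross-ratio convention this is precisely the condition that adjacent developed triangles lie on opposite sides of their common geodesic, and the nesting-of-half-planes argument along the dual tree of the triangulation of $\widetilde{P}$ upgrades this to global injectivity of the developing map, whence discreteness and faithfulness. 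You correctly identify the global embeddedness as the only nontrivial step; the nesting argument you sketch does close it (each new triangle lies in the closed half-plane cut off by the edge through which it was entered, so the half-planes associated to the edges along any embedded path in the dual tree are strictly nested, separating any two distinct developed triangles), and alternatively the appeal to the standard shear-coordinate parametrization of Teichm\"uller space in \cite{bonahon96} is legitimate. What the paper's computation buys is a completely self-contained, quantitative proof (explicit fixed-point positions and a fundamental domain that are reused later, e.g.\ in \S\ref{subsec:FN-coordinates}); what your approach buys is conceptual transparency and independence of the normalization $e_i<-1$ -- the sign computation makes it visible why $e_1e_2e_3<0$ is exactly the Fuchsian condition -- at the cost of either importing a known theorem or carrying out the tree-nesting argument, which is roughly comparable in length to the paper's inequality check.
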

\begin{proof}
By the action of $(\mathbb{Z}/2\mathbb{Z})^3$ (see (\ref{eq:action_of_Z/2Z^3})) and $H^1(P;\mathbb{Z}/2\mathbb{Z})$ (see (\ref{eq:action_of_H^1(P,Z/2Z)})), 
we assume that $e_1, e_2 < -1$ and $|e_3| >1$.
Since $e_1 e_2 e_3 < 0$, we have $e_1, e_2, e_3 < -1$.
Let $x_i$ (resp. $y_i$) be the fixed point corresponding to $e_i$ (resp. ${e_i}^{-1}$) as in Proposition \ref{prop:pants_rep}.
Assume $x_1 = \infty$, $y_1=0$ and $x_2 =1$. We will show that 
\begin{equation}
\label{eq:inequality_for_a_fundamental_domain}
0 < x_2=1 < y_2 < x_3 < y_3 < {e_1}^2,
\end{equation}
see Figure \ref{fig:fundamental_domain}.
Then we can take a fundamental domain bounded by four geodesics
\[
(\infty, 0), \quad (1,y_2), \quad (x_3, y_3), \quad ({e_1}^2, {e_1}^2 y_1)
\]
and other four geodesics perpendicular to them as indicated in Figure \ref{fig:fundamental_domain}.

Considering the complex parameter at the geodesic $(x_3, \infty)$, we have 
\[
\frac{e_2}{e_3 e_1} = [x_3 : \infty : {e_1}^2 : 1] = \frac{1-x_3}{{e_1}^2-x_3},
\]
and 
\[
x_3 = \frac{-e_1(e_1e_2-e_3)}{e_1e_3-e_2}. 
\]
By (\ref{eq:other_fixed_point}), we have
\[
\begin{split}
y_2 &= \frac{-{e_2}^2 e_3 x_3 + e_3(x_3-1)+e_2 e_1}{ -{e_2}^2 e_3 +e_2 e_1} = \frac{(e_1e_2-e_3)(1-e_1e_2e_3)}{(e_2e_3-e_1)(e_1e_3-e_2)} \\
y_3 &= \frac{{e_3}^2 e_1 (x_3-1) + e_1 -e_3 e_2 x_3}{ e_1 - e_3 e_2} = \frac{-e_1(1-e_1e_2 e_3)}{e_2e_3-e_1} \\
\end{split}
\]
From these, we have
\[
\begin{split}
y_2 - 1   &= \frac{-e_3({e_1}^2-1)({e_2}^2-1)}{(e_2e_3-e_1)(e_1e_3-e_2)} > 0, \quad 
x_3 - y_2 = \frac{(e_1e_2-e_3)({e_1}^2-1)}{(e_2e_3-e_1)(e_1e_3-e_2)} > 0, \\
y_3 - x_3 &= \frac{e_1e_2({e_1}^2-1)({e_3}^2-1)}{(e_2e_3-e_1)(e_1e_3-e_2)} > 0, \quad
{e_1}^2 - y_3 = \frac{-e_1({e_2}^2-1)}{e_2e_3-e_1} > 0.
\end{split}
\]
Thus (\ref{eq:inequality_for_a_fundamental_domain}) holds.

\begin{figure}
\input{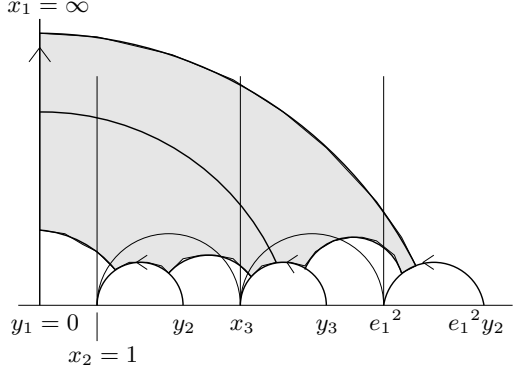}
\caption{A fundamental domain for a pair of pants.}
\label{fig:fundamental_domain}
\end{figure}

\end{proof}

So if $e_i < -1$ for all $i$, the restriction of the representation to any pair of pants is discrete and faithful.
If further $t_i >  0$ for all $i$, the $\PSLR$-representation we obtained is discrete faithful by the Maskit combination theorem.

In Propositions \ref{prop:type_V_move_four_holed} and \ref{prop:type_V_move_one_holed}, if we take $\frac{\tr-\sqrt{\tr^2-4}}{2} < -1$ as the new eigenvalue parameter, 
type (V) moves preserve the parameter space (\ref{eq:representative_for_Teichmuller_space}).

\subsection{Geometric interpretation of twist parameters}
\label{subsec:geom_iterpretation}
When we restrict the parameter space to (\ref{eq:representative_for_Teichmuller_space}), the twist parameters are described in terms of hyperbolic geometry.

We consider a pants decomposition and a dual graph of a four-holed sphere $S_{0,4}$ as in Figure \ref{fig:twist_parameter}.
We let $e_1, \dots ,e_5$ be the eigenvalue parameters and the twist parameter $t_1$.
Let $\rho$ be a representation obtained from the parameter space  (\ref{eq:representative_for_Teichmuller_space}).
We define the elements $\gamma_1, \dots, \gamma_5 \in \pi_1(S_{0,4})$ as in Figure \ref{fig:twist_parameter} and 
denote the fixed point of $\rho(\gamma_i)$ corresponding to $e_i$ (resp. ${e_i}^{-1}$) by $x_i$ (rep. $y_i$).
If we assume that $x_i$'s are in $\mathbb{R}P^1$ (hence also $y_i \in \mathbb{R}P^1$)
and $x_1 = \infty$ and $y_1 = 0$, then $\rho(\gamma_1) = \pm \begin{pmatrix} \sqrt{-t_1} & 0 \\ 0 & \sqrt{-t_1}^{-1} \end{pmatrix}$.
Consider the nearest point projections of $x_2$ and $x_5$ to the geodesic $(x_1,y_1)$, 
the hyperbolic (signed) distance between them is $\log(t_1)$ (see Figure \ref{fig:twist_in_teich}).
Thus if we denote the geodesic representative of the interior pants curve by $c_1$ and the boundary pants curve corresponding to $e_i$ by $c_i$,  
our twist parameter is the exponential of the hyperbolic distance between the geodesic perpendicular to $c_1$ and spiralling to $c_2$ and 
the geodesic perpendicular to $c_1$ and spiralling to $c_5$.

By taking a covering, we also have a geometric interpretation of the twist parameters of a one-holed torus.

\begin{figure}
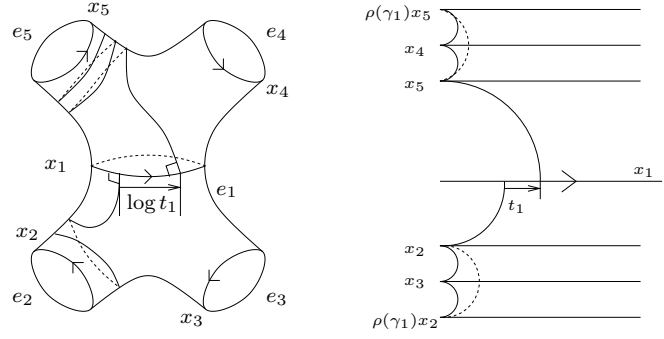

\input{twist_fuchsian.pstex_t}
\hspace{30pt}
\input{twist_fuchsian_dev.pstex_t}
\caption{Two pairs of pants are glued with the twist of the hyperbolic length $\log(t_i)$.}
\label{fig:twist_in_teich}
\end{figure}

\subsection{Fenchel-Nielsen twist parameters}
\label{subsec:FN-coordinates}
We compare our twist parameters with the Fenchel-Nielsen twist parameters.

There exists various normalization of Fenchel-Nielsen twist parameters, we use the following.
We consider a four-holed sphere and use the notations of \S \ref{subsec:geom_iterpretation}.
The Fenchel-Nielsen twist parameter is defined by the hyperbolic (signed) distance between the geodesic perpendicular to both $c_1$ and $c_2$ 
and the geodesic perpendicular to both $c_1$ and $c_5$ (see Figure \ref{fig:FN_twist.pstex_t}). 
We denote the signed distance by $\tau_1$ and let $t^{FN}_1 = \exp(\tau_1)$.
Then we have the following:

\begin{figure}
\input{FN_twist.pstex_t}
\caption{}
\label{fig:FN_twist.pstex_t}
\end{figure}

\begin{proposition}
Consider the pants decomposition and the dual graph given in Figure \ref{fig:twist_parameter}.
Let $e_1, \dots, e_5$ be the eigenvalue parameters and $t_1$ be the twist parameter.
Then we have
\begin{equation}
\label{eq:our_twist__to_FN_twist}
t^{FN}_1 = \sqrt{\frac{(e_1e_3-e_2)(e_2e_3-e_1)(e_1e_4-e_5)(e_4e_5-e_1)}{(e_1e_2-e_3)(1-e_1e_2e_3)(e_1e_5-e_4)(1-e_1e_4e_5)}}t_1.
\end{equation}
\end{proposition}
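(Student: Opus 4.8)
The plan is to work in the upper half-plane model with the normalization of \S\ref{subsec:geom_iterpretation}, placing the fixed points of $\rho(\gamma_1)$ at $x_1=\infty$ and $y_1=0$, so that the axis of $\rho(\gamma_1)$ is the imaginary axis and both twist parameters are read off as signed distances along it. As recalled there, with this normalization the gluing matrix $M(\sqrt{-t_1};x_1,y_1)$ of \eqref{eq:gluing_map} acts on $\mathbb{C}P^1$ as $z\mapsto -t_1 z$ and carries $x_2$ to $x_5$, so that $t_1=-x_5/x_2$; equivalently, the geodesics perpendicular to $c_1$ and spiralling to $c_2$ and to $c_5$ meet the imaginary axis at heights $|x_2|$ and $|x_5|$ (Figure~\ref{fig:twist_in_teich}). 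The first genuine step is to locate, in the same coordinates, the feet of the two common perpendiculars defining the Fenchel--Nielsen twist $\tau_1$ (Figure~\ref{fig:FN_twist.pstex_t}). The axis of $\rho(\gamma_i)$ is the geodesic $(x_i,y_i)$, and an elementary orthogonality-of-circles computation shows that its common perpendicular with the imaginary axis meets the latter at height $\sqrt{x_i y_i}$; this is legitimate because on the Teichm\"uller locus \eqref{eq:representative_for_Teichmuller_space} the points $x_i$ and $y_i$ have the same sign, so the radicand is positive. Hence $t^{FN}_1=\exp(\tau_1)=\sqrt{x_5 y_5/(x_2 y_2)}$.

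Combining the two expressions and squaring removes all branch ambiguities:
\[
\left( \frac{t^{FN}_1}{t_1} \right)^2 = \frac{x_5 y_5}{x_2 y_2}\cdot\frac{x_2^2}{x_5^2} = \frac{x_2\, y_5}{x_5\, y_2}.
\]
The remainder is purely algebraic: express $x_2/y_2$ and $y_5/x_5$ through the eigenvalue parameters. For the pair of pants $P_1$ this is already contained in the proof of Lemma~\ref{lem:discreteness_of_pslr_rep}, where \eqref{eq:other_fixed_point} gives $y_2$ explicitly and therefore
\[
\frac{x_2}{y_2} = \frac{(e_2 e_3 - e_1)(e_1 e_3 - e_2)}{(e_1 e_2 - e_3)(1 - e_1 e_2 e_3)}.
\]
For $P_2$ one applies the same formula \eqref{eq:other_fixed_point}, now to the cyclically ordered data $(e_1^{-1},e_4,e_5)$ and $(x_1,x_4,x_5)$ attached to the generators $\gamma_1^{-1},\gamma_4,\gamma_5$; since $x_1=\infty$ and $y_1=0$ are fixed and the ratio $y_5/x_5$ is invariant under the one remaining rescaling of the coordinate, clearing the powers of $e_1$ yields
\[
\frac{y_5}{x_5} = \frac{(e_4 e_5 - e_1)(e_1 e_4 - e_5)}{(1 - e_1 e_4 e_5)(e_1 e_5 - e_4)}.
\]
Multiplying the two ratios reproduces exactly the quantity under the radical in \eqref{eq:our_twist__to_FN_twist}, and taking the positive square root (valid since $t_1>0$ and $t^{FN}_1=e^{\tau_1}>0$ on the Teichm\"uller locus) gives the stated identity.

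The main obstacle is bookkeeping of signs and of the branch of the square root, not any hard estimate. One must verify that the feet $\sqrt{x_i y_i}$ are genuinely real (equal-sign fixed points), that $t_1=-x_5/x_2$ carries the correct sign so that $t^{FN}_1/t_1$ is positive, and that the four sign changes produced by the substitution $e_1\mapsto e_1^{-1}$ in the $P_2$ computation cancel to leave the clean expression for $y_5/x_5$. Squaring at the outset is precisely what renders these ambiguities harmless, and the positivity built into \eqref{eq:representative_for_Teichmuller_space} is what pins down the final choice of sign.
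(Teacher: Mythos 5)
Your proof is correct and follows essentially the same route as the paper's: normalize the axis of $\rho(\gamma_1)$ to the geodesic $(\infty,0)$, locate the feet of the two common perpendiculars on it at heights $\sqrt{x_iy_i}$, and evaluate $(t^{FN}_1/t_1)^2 = x_2y_5/(x_5y_2)$ using \eqref{eq:other_fixed_point}. The only difference is cosmetic bookkeeping: you work with scaling-invariant ratios $x_2/y_2$ and $y_5/x_5$, whereas the paper computes $y_2$, $x_4$, $x_5$, $y_5$ individually before taking the same quotient.
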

\begin{proof}
We take $\gamma_1, \dots, \gamma_5$ as in Figure \ref{fig:twist_parameter}.
Let $x_i$ (resp. $y_i$) be the eigenvalue of $\rho(\gamma_i)$ corresponding to $e_i$ (resp. ${e_i}^{-1}$).
We assume that $x_1 = \infty$, $y_1 = 0$ and $x_2=1$.
Let $m_2$ be the point on the geodesic $(x_1, y_1)$ nearest to $(x_2,y_2)$ and 
$m_5$ be the point on the geodesic $(x_1, y_1)$ nearest to $(x_5,y_5)$ (see Figure \ref{fig:comp_with_FN.pstex_t}).
Since $m_2$ (resp. $m_5$) is the midpoint between the nearest point projections of $x_2$ and $y_2$ to $(x_1,y_1)$ (resp. $x_5$ and $y_5$ to $(x_1,y_1)$), we have
\[
\frac{y_2}{m_2} = \frac{m_2}{1},  \quad  \frac{t_1}{m_5} = \frac{m_5}{-y_5}.
\]
Thus we have 
\begin{equation}
\label{eq:compare_with_FN_twist_1}
(t^{FN}_1)^2 = \left( \frac{m_5}{m_2} \right)^2 = \frac{- t_1 y_5}{y_2}. 
\end{equation}
So we have to compute $y_2$ and $y_5$.
By the calculations of Lemma \ref{lem:discreteness_of_pslr_rep}, we have 
\[
x_3 = \frac{e_1(e_1 e_2 -e_3)}{e_2 - e_1 e_3}, \quad y_2 = \frac{({e_1}{e_2}-{e_3})(1-{e_1}{e_2}{e_3})}{({e_2}{e_3}-{e_1})({e_1}{e_3}-{e_2})}.
\]
By (\ref{eq:x4}) and (\ref{eq:x5}), we have
\[
\begin{split}
x_4 &= \frac{e_1(-(e_1 e_3-e_2)(e_1e_4-e_5)t_1+e_3(e_1e_5-e_4))(1-x_3)-e_1^2e_2(e_1e_5-e_4)+e_2(e_1e_5-e_4)x_3}{-e_1^2e_2(e_1e_5-e_4)+e_2(e_1e_5-e_4)} \\
&= \frac{{e_1}({e_1}{e_4}-{e_5})}{{e_1}{e_5}-{e_4}} {t_1}, \\
x_5 &=  \frac{(-(e_2-e_1e_3)t_1+e_1e_3)(1-x_3)-e_1^2e_2+e_2x_3}{-e_1^2e_2+e_2} = -{t_1}.
\end{split}
\]
By (\ref{eq:other_fixed_point}), we have
\[
y_5 = \frac{{e_5}^2 {e_1}^{-1} (x_5-x_4)+{e_1}^{-1}x_4- e_5 e_4 x_5}{{e_1}^{-1}- e_5 e_4} 
= -\frac{({e_1}{e_4}-{e_5})({e_4}{e_5}-{e_1})}{({e_1}{e_5}-{e_4})(1-{e_1}{e_4}{e_5})} {t_1}.
\]
Substitute $y_2$, $x_5$ and $y_5$ into (\ref{eq:compare_with_FN_twist_1}), we have 
\[
(t^{FN}_1)^2 = \frac{x_5 y_5}{y_2} {t_1}^2 = \frac{(e_1e_3-e_2)(e_2e_3-e_1)(e_1e_4-e_5)(e_4e_5-e_1)}{(e_1e_2-e_3)(1-e_1e_2e_3)(e_1e_5-e_4)(1-e_1e_4e_5)} {t_1}^2.
\]
Since $e_i < -1$ for $i=1, \dots, 5$, we have $e_i e_j - e_k > 0$ and $1 - e_ie_je_k > 0$ for any $i,j,k = 1, \dots, 5$.
This completes the proof.
\end{proof}

\begin{figure}
\input{compare_with_FN.pstex_t}
\caption{}
\label{fig:comp_with_FN.pstex_t}
\end{figure}

\begin{remark}
By (\ref{eq:our_twist__to_FN_twist}), we can define $t^{FN}_i$ on a simply connected domain in $X_{PSL}(S,C)$ 
containing the parameter space (\ref{eq:representative_for_Teichmuller_space}), e.g. the subset consisting of quasi-Fuchsian representations.
We can observe that the action of $(\mathbb{Z}/2\mathbb{Z})^{3g-3+3b}$ on $t^{FN}_i$ is trivial. 
We can also observe that $t^{FN}_1$ is invariant under type (I) move of \S \ref{sec:change_to_shear_coodinates}, 
thus $t^{FN}_1$ is defined for an unoriented dual graph.
\end{remark}

Applying (\ref{eq:our_twist__to_FN_twist}) to the one-holed torus case of \S \ref{subsec:once_punctured}, we have
\[
t^{FN}_1 = \sqrt{\frac{(e_1{e_1}^{-1}-e_2)(e_2{e_1}^{-1}-e_1)(e_1e_1-e_2)(e_1e_2-e_1)}{(e_1e_2-{e_1}^{-1})(1-e_1e_2{e_1}^{-1})(e_1e_2-e_1)(1-e_1e_1e_2)}}t_1
= \frac{{e_1}^2-e_2}{1-{e_1}^2e_2} t_1.
\]
Thus we have:
\begin{proposition}
Consider the pants decomposition and the dual graph of a one-holed torus given in \S \ref{subsec:once_punctured} 
and let $e_1, e_2$ be the eigenvalue parameters and $t_1$ be the twist parameter.
Then we have
\begin{equation}
\label{eq:our_twist__to_FN_twist_one_holed}
t^{FN}_1 = \frac{{e_1}^2-e_2}{1-{e_1}^2e_2} t_1.
\end{equation}
\end{proposition}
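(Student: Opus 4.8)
The plan is to derive (\ref{eq:our_twist__to_FN_twist_one_holed}) as a direct specialization of the four-holed sphere formula (\ref{eq:our_twist__to_FN_twist}), exploiting the same covering reduction that is used throughout for the one-holed torus.

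First I would recall that, by the construction of \S\ref{subsec:once_punctured} together with Figure \ref{fig:twist_for_one_holed}, the twist parameter $t_1$ of the interior curve of the one-holed torus is \emph{by definition} the twist parameter of the corresponding interior edge of the four-holed sphere obtained by passing to the cover associated with the circle formed by $f_1$. In that cover the single pair of pants lifts to two pairs of pants glued along $c_1$, and the five eigenvalue parameters $(e_1,e_2,e_3,e_4,e_5)$ of the generic four-holed sphere specialize to $(e_1,e_2,e_1^{-1},e_1,e_2)$, exactly as in the computation of $x_4$ and $x_5$ in \S\ref{subsec:once_punctured}. Since the Fenchel-Nielsen twist parameter $t^{FN}_1$ is also read off in this same cover, formula (\ref{eq:our_twist__to_FN_twist}) applies verbatim once these substitutions are made.

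Next I would carry out the substitution $(e_1,e_2,e_3,e_4,e_5)=(e_1,e_2,e_1^{-1},e_1,e_2)$ inside the radical of (\ref{eq:our_twist__to_FN_twist}). The eight linear factors collapse in pairs: both $e_1e_3-e_2$ and $1-e_1e_2e_3$ become $1-e_2$, both $e_4e_5-e_1$ and $e_1e_5-e_4$ become $e_1(e_2-1)$, while $e_2e_3-e_1=(e_2-e_1^2)/e_1$, $e_1e_4-e_5=e_1^2-e_2$, $e_1e_2-e_3=(e_1^2e_2-1)/e_1$ and $1-e_1e_4e_5=1-e_1^2e_2$. The spurious powers of $e_1$ cancel between numerator and denominator, and after clearing the common factors the radicand reduces to the perfect square $(e_1^2-e_2)^2/(1-e_1^2e_2)^2$, so the square root equals $(e_1^2-e_2)/(1-e_1^2e_2)$ up to sign.

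Finally I would fix the branch of the square root. A direct sign check using $e_1,e_2<-1$ shows that each of the eight factors above is positive (note that $e_3=e_1^{-1}$ lies in $(-1,0)$, so one verifies this by hand rather than quoting the blanket inequality from the four-holed sphere proof), hence the radicand is positive and the positive root is selected, giving $t^{FN}_1=\tfrac{e_1^2-e_2}{1-e_1^2e_2}\,t_1$. The only point requiring genuine care, and therefore the main (though modest) obstacle, is this sign bookkeeping under the specialization $e_3=e_1^{-1}$: confirming that the $(1-e_2)$ and $e_1$ contributions cancel cleanly and that the surviving quotient is a square with the stated positive value. Everything else is the routine algebra already displayed in the paragraph preceding the statement.
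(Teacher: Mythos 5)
Your proposal is correct and is essentially the paper's own argument: the paper obtains the one-holed torus formula by substituting $(e_1,e_2,e_3,e_4,e_5)=(e_1,e_2,{e_1}^{-1},e_1,e_2)$ into (\ref{eq:our_twist__to_FN_twist}) and simplifying the radical to $\frac{{e_1}^2-e_2}{1-{e_1}^2e_2}$. Your extra care with the sign of the square root (noting that $e_3={e_1}^{-1}\in(-1,0)$ prevents quoting the blanket positivity claim from the four-holed sphere proof) is a correct refinement of a point the paper passes over silently.
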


We end this subsection computing some trace function using $t^{FN}_1$.
First we consider the four-holed sphere case studied in \S \ref{subsec:four_holed_sphere}.
Rewrite (\ref{eq:four_holed_tr_1}) by $t^{FN}_1$, we have
\[
\begin{split}
\tr(\rho(\gamma_3 \gamma_4)& ) = 
\frac{1}{({e_1}-{e_1}^{-1})^{2}} \biggl( -\frac{(e_2 e_3-e_1)(e_1 e_3 - e_2)}{e_1 e_2 e_3}
\cdot
\frac{(e_4 e_5 - e_1)(e_1 e_4 -e_5)}{e_1 e_4 e_5} {t_1} \\
&-\frac{(e_1 e_2 -e_3)(1-e_1 e_2 e_3)}{e_1 e_2 e_3}
\cdot
\frac{(e_1 e_5 -e_4)(1 - e_1 e_4 e_5)}{e_1 e_4 e_5} \frac{1}{t_1} \\
&+({e_1}+{e_1}^{-1})\*\bigl(({e_3}+{e_3}^{-1})({e_5}+{e_5}^{-1})+({e_2}+{e_2}^{-1})({e_4}+{e_4}^{-1}) \bigr) \\
&-2\bigl( ({e_2}+{e_2}^{-1})({e_5}+{e_5}^{-1})+({e_3}+{e_3}^{-1})({e_4}+{e_4}^{-1}) \bigr) \biggr) \\
=&
\frac{1}{({e_1}-{e_1}^{-1})^{2}} \biggl( -\frac{1}{{e_1}^2 e_2 e_3 e_4 e_5}\sqrt{(e_1 e_3 - e_2)(e_1 e_2 -e_3)(e_2 e_3 - e_1)(1 - e_1 e_2 e_3)} \cdot \\
&\sqrt{(e_1 e_4 -e_5)(e_1 e_5 -e_4)(e_4 e_5 - e_1)(1 - e_1 e_4 e_5)} \cdot ({t_1^{FN}}+(t_1^{FN})^{-1}) \\
&+({e_1}+{e_1}^{-1})\*\bigl(({e_3}+{e_3}^{-1})({e_5}+{e_5}^{-1})+({e_2}+{e_2}^{-1})({e_4}+{e_4}^{-1}) \bigr) \\
&-2\bigl( ({e_2}+{e_2}^{-1})({e_5}+{e_5}^{-1})+({e_3}+{e_3}^{-1})({e_4}+{e_4}^{-1}) \bigr) \biggr). \\
\end{split}
\]
Now we have
\[
\begin{split}
&\frac{1}{{e_1} e_2 e_3}\sqrt{(e_1 e_3 - e_2)(e_1 e_2 -e_3)(e_2 e_3-e_1)(1 - e_1 e_2 e_3)} \\
&= \sqrt{(e_1+{e_1}^{-1})^2+({e_2}+{e_2}^{-1})^2+({e_3}+{e_3}^{-1})^2-({e_1}+{e_1}^{-1})({e_2}+{e_2}^{-1})({e_3}+{e_3}^{-1})-4} .
\end{split}
\]
Let $\chi_i=e_i+{e_i}^{-1}$ and $e'_1 $ be one of the eigenvalues of $\rho(\gamma_3\gamma_4)$, then we have
\[
\begin{split}
e'_1 + {e'_1}^{-1}
=&\frac{1}{({e_1}-{e_1}^{-1})^{2}}
\biggl( - \sqrt{{\chi_1}^2+{\chi_2}^2+{\chi_3}^2-{\chi_1}{\chi_2}{\chi_3}-4} \\
&\cdot \sqrt{{\chi_1}^2+{\chi_4}^2+{\chi_5}^2-{\chi_1}{\chi_4}{\chi_5}-4} \cdot ({t_1^{FN}}+(t_1^{FN})^{-1}) \\
&+{\chi_1}( \chi_3 \chi_5 + \chi_2 \chi_4 ) -2( \chi_2\chi_5+\chi_3\chi_4 ) )  \biggr).
\end{split}
\]
(We remark that the insides of the square roots coincide with $\kappa$ of Theorem \ref{thm:goldman_SL2R_or_SU2} and are shown to be positive.)
Let $l_i$ be the length and $\tau_i$ the Fenchel-Nielsen twist parameter of the pants curve $c_i$.  
Since $l_i = 2 \log(-e_i)$ and $\tau_i = \log (t^{FN}_i)$, we have 
\[
\begin{split}
e_i+{e_i}^{-1} = - 2 &\cosh(l_i/2), \quad e_1-{e_1}^{-1} = -2 \sinh(l_1/2), \\
&t^{FN}_1+(t^{FN}_1)^{-1}  = 2\cosh(\tau_1).
\end{split}
\]
Using these relations, we obtain Okai's formula \cite{okai_effects}:
\[
\begin{split}
\cosh&(l'_1/2) = \frac{1}{\sinh^2(l_1/2)} \times  \\
\biggl(&\sqrt{(\cosh^2(\frac{l_1}{2})+\cosh^2(\frac{l_2}{2})+\cosh^2(\frac{l_3}{2})+2\cosh(\frac{l_1}{2})\cosh(\frac{l_2}{2})\cosh(\frac{l_3}{2})-1} \times \\
&\sqrt{(\cosh^2(\frac{l_1}{2})+\cosh^2(\frac{l_4}{2})+\cosh^2(\frac{l_5}{2})+2\cosh(\frac{l_1}{2})\cosh(\frac{l_4}{2})\cosh(\frac{l_5}{2})-1} \times \cosh(\tau_1)  \\
&+\cosh(l_1/2)( \cosh(l_3/2) \cosh(l_5/2) + \cosh(l_2/2) \cosh(l_4/2) )  \\
&+( \cosh(l_2/2)\cosh(l_5/2)+\cosh(l_3/2)\cosh(l_4/2) ) )  \biggr).
\end{split}
\]

Next we consider the one-holed sphere case studied in \S \ref{subsec:once_punctured}.
Rewrite (\ref{eq:one_holed_tr}) by $t^{FN}_1$, we have
\[
\tr(\rho(\beta_1)) 
= \frac{1}{e_1 -{e_1}^{-1}} \sqrt{{e_1}^2+{{e_1}}^{-2} -(e_2+{e_2}^{-1}))}((t^{FN}_1)^{1/2}+(t^{FN}_1)^{-1/2}).
\]
Let $e'_1 $ be one of the eigenvalues of $\rho(\beta_1)$ and $l_i$ and $\tau_i$ as before, we have
\begin{equation}
2 \cosh(l'_1/2) = \frac{\cosh(\tau_1/2)}{\sinh(l_1/2)} \sqrt{2\cosh(l_1) + 2\cosh(l_2/2)}.
\end{equation}

\section{Representations of 3-manifold groups}
\label{sec:rep_of_3_manifolds}
%
%


In this section, we construct $\PSLC$-representations of the fundamental group of a 3-manifold using an ideal triangulation, developed in \cite{thurston} and \cite{neumann-zagier}.
In \S \ref{sec:change_to_shear_coodinates}, we use this construction to transform our coordinates into exponential shear-bend coordinates.

\subsection{Ideal tetrahedra}
\label{sec:ideal_tetrahedra}
An \emph{ideal tetrahedron} is the convex hull of distinct 4 points of $\mathbb{C}P^1$ in $\mathbb{H}^3$.
We assume that every ideal tetrahedron has an ordering on the vertices.
Let $z_0,z_1,z_2,z_3$ be the vertices of an ideal tetrahedron.
This ideal tetrahedron is parametrized by the cross ratio $[z_0:z_1:z_2:z_3]$.
As remarked in \S \ref{subsec:basic_facts_on_PSL}, the cross ratio is invariant under the action of $\PSLC$.
We denote the edge of the ideal tetrahedron spanned by $z_i$ and $z_j$ by $[z_iz_j]$.
Take $(i,j,k,l)$ to be an even permutation of $(0,1,2,3)$.
We define the complex parameter of the edge by the cross ratio $[z_i:z_j:z_k:z_l]$.
This parameter only depend on the choice of the edge $[z_iz_j]$.
We can easily observe that the opposite edge has the same complex parameter  
and the other edges are parametrized by $\frac{1}{1-z}$ and $1-\frac{1}{z}$ where $z = [z_i:z_j:z_k:z_l]$ (see Figure \ref{fig:ideal_tetrahedron}).
\begin{figure}
\input{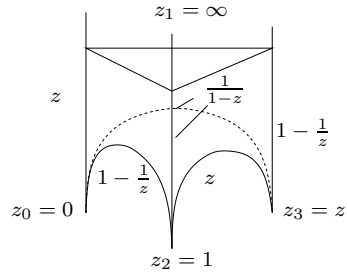}
\caption{The complex parameters of the edges of an ideal tetrahedron.}
\label{fig:ideal_tetrahedron}
\end{figure}

Let $A$ be an element of $\PSLC$ having two fixed points $(x,y)$.
We denote the eigenvalue corresponding to $x$ by $e$. (Thus $e^{-1}$ is the eigenvalue corresponding to $y$.) 
Then $A$ is given by (\ref{eq:two_fixed_points_case}).
Let $z$ be a point of $\mathbb{C}P^1$ distinct from $x$ and $y$.
Consider the ideal tetrahedron spanned by $(x,y,z,Az)$, the complex parameter of the edge $[xy]$ is equal to $[x:y:z:Az] = e^2$ (see Figure \ref{fig:interpretation} 
and compare with Lemma \ref{lem:two_fixed_pts_with_one_point_behavior}).
In other words, for an ideal tetrahedron spanned by $z_0, z_1, z_2, z_3$, the element of $\PSLC$ which sends 
$(z_0,z_1,z_2)$ to $(z_0,z_1,z_3)$ has eigenvalues $(\sqrt{[z_0: z_1: z_2 : z_3]})^{\pm 1}$.
So the cross ratio of an ideal tetrahedron can be interpreted as the square of an eigenvalue of some matrix related to the ideal tetrahedron.

\begin{figure}
\input{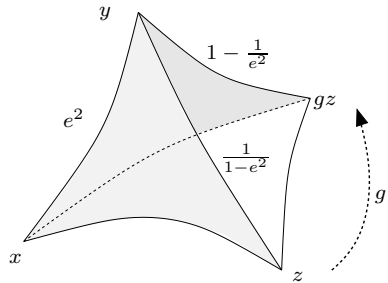}
\caption{If $A$ is an element of $\PSLC$ whose fixed points are $x$ and $y$, and $e$ is the eigenvalue corresponding to $x$.
Then the edge $[xy]$ of the ideal tetrahedron $(x,y,z,Az)$ has the complex parameter $e^2$.}
\label{fig:interpretation}
\end{figure}

\subsection{Ideal triangulation and representation of 3-manifold groups}
In the following sections, a \emph{triangulation} $T$ is a cell complex obtained by gluing 3-dimensional tetrahedra along their 2-dimensional faces in pairs.
We remark that this is not a simplicial complex since some vertices of a tetrahedron may be identified in $T$.
We often distinguish between a $0$-\emph{simplex} of $T$ and a \emph{vertex} of a tetrahedron since various vertices of tetrahedra may be identified with one $0$-simplex in $T$.
We also distinguish between a $1$-simplex of $T$ and an edge of a tetrahedron.
We denote the $k$-skeleton of $T$ by $T^{(k)}$.
\begin{definition}
A (topological) \emph{ideal triangulation} of a compact 3-manifold $M$ is a triangulation $T$ such that $T - N(T^{(0)})$ is homeomorphic to $M$.
\end{definition}
Here $N(T^{(0)})$ is a small open neighborhood of $T^{(0)}$.
Because all $0$-simplices are missing in $M$, we call them ideal vertices.
In the usual definition of ideal triangulations, it is assumed that $\partial N(T^{(0)})$ consists of tori, but here we do not assume this property.

We denote the universal cover of $M$ by $\widetilde{M}$.
From now on, we construct a developing map i.e. an equivariant map $\widetilde{M} \to \mathbb{H}^3$, which gives rise to a $\PSLC$-representation of $\pi_1(M)$.
We assign an ordering on the vertices of each tetrahedron of $T$, then assign a complex parameter $z_i$ for each tetrahedron.
Pick a tetrahedron $\Delta$ of $T$, then put an ideal tetrahedron in $\mathbb{H}^3$ according to the complex parameter of $\Delta$.
Then the tetrahedra adjacent to $\Delta$ can be realized in $\mathbb{H}^3$ according to their complex parameters. 
Continuing in this way, we obtain a map from the universal cover of $T-T^{(1)}$ to $\mathbb{H}^3$.
To obtain a map from the universal cover $\widetilde{M}$, which is homeomorphic to the universal cover of $T - N(T^{(0)})$, 
we have to impose the \emph{gluing equation} around each $1$-simplex of $T$.
Consider the edges of the tetrahedra which belong to a $1$-simplex of $T$ (Figure \ref{fig:gluing_equation}).
When a path goes around the $1$-simplex, we have to make sure that the developed image of the ideal tetrahedra in $\mathbb{H}^3$ returns back to the beginning position.
Since each edge has complex parameter $z_i$, $\frac{1}{1-z_i}$ or $1-\frac{1}{z_i}$, we have to impose the following equation
\[
\prod_{i=1} z_i^{p_{ji}}\left(\frac{1}{1-z_i}\right)^{p'_{ji}}\left(1- \frac{1}{z_i}\right)^{p''_{ji}}  = 1
\]
for each $1$-simplex (indexed by $j$) of $T$.
These equations are simplified to the following form:
\[
\pm \prod_{i=1} z_i^{r'_{ji}}(1-z_i)^{r''_{ji}}  = 1.
\]
We call these equations \emph{gluing equations}.
Let
\[
\mathcal{D}(M,T) = \{ (z_1, \dots, z_n) \in (\mathbb{C}-\{0,1\})^n | \pm \prod_{i=1} z_i^{r'_{ji}}(1-z_i)^{r''_{ji}}  = 1 \quad (\forall j) \},
\]
where $n$ is the number of the tetrahedra of the ideal triangulation $T$.
When the triangulation $T$ is clear from the context we simply denote $\mathcal{D}(M,T)$ by $\mathcal{D}(M)$.
For any point of $\mathcal{D}(M)$, we obtain a map $D: \widetilde{M} \to \mathbb{H}^3$.
For any $\gamma \in \pi_1(M)$, there exists a unique element $\rho(\gamma) \in \PSLC$ 
such that $D(\gamma p) = \rho (\gamma ) D(p)$ for any $p \in \widetilde{M}$, where $\gamma$ acts on $\widetilde{M}$ as a deck transformation.
Then $\rho$ is a homomorphism from $\pi_1(M)$ to $\PSLC$, which is called the \emph{holonomy representation} of $D$.
If we change the position of the first ideal tetrahedron $\Delta$, we obtain a conjugate representation.
So we have a map $\mathcal{D}(M) \to X_{PSL}(M)$ by sending an element of $\mathcal{D}(M)$ to its holonomy representation. 

\begin{figure}
\input{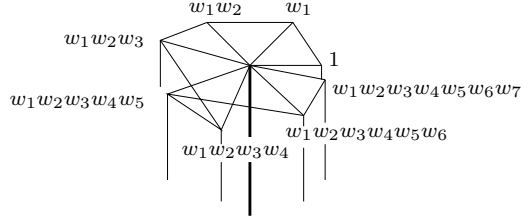}
\caption{Developing map around a $1$-simplex. Each $w_k$ is one of $z_i$, $\frac{1}{1-z_i}$ or $1-\frac{1}{z_i}$.}
\label{fig:gluing_equation}
\end{figure}

Next we discuss the restriction of the holonomy representation to a boundary subgroup.
Each boundary component of $M$ is expressed as $\partial N(p)$ by some $0$-simplex $p$ of $T$.
Therefore the restriction of the holonomy representation to the boundary subgroup $\partial N(p)$ fixes a point of $\mathbb{C}P^1$, i.e. reducible.
The results we have obtained so far are summarized as follows: 
\begin{proposition}
\label{prop:3-dim}
Let $M$ be a compact 3-manifold and $T$ be an ideal triangulation of $M$.
For $(z_1, \dots, z_n) \in \mathcal{D}(M,T)$, there exists a $\PSLC$ representation of $\pi_1(M)$ up to conjugation.
This gives an algebraic map $\mathcal{D}(M,T) \to X_{PSL}(M)$.
The restriction of the representation to any boundary subgroup is reducible.
\end{proposition}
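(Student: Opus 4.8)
The plan is to assemble the proof directly from the developing-map construction carried out just above, verifying the four assertions of the statement in turn: single-valuedness of the developing map $D \colon \widetilde{M} \to \mathbb{H}^3$, the homomorphism property of the holonomy, independence of the initial placement up to conjugation, and reducibility on each boundary subgroup. Since all of the machinery is already in place, the work is to check that the gluing equations defining $\mathcal{D}(M,T)$ are exactly what is needed.

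First I would fix a base tetrahedron $\widetilde{\Delta}$ in the triangulation of $\widetilde{M}$ and place its image $D(\widetilde{\Delta})$ in $\mathbb{H}^3$ as an arbitrary ideal tetrahedron. Every other tetrahedron of $\widetilde{M}$ is reached from $\widetilde{\Delta}$ by crossing a sequence of $2$-faces, and at each crossing the complex parameter $z_i \in \mathbb{C} \setminus \{0,1\}$ prescribes uniquely where the adjacent ideal tetrahedron develops. The only indeterminacy is whether two edge-paths between $\widetilde{\Delta}$ and a given tetrahedron produce the same developed image. Any two such paths differ by loops that may be pushed across $2$-faces and around $1$-simplices, so it suffices to check consistency around each $1$-simplex; this is precisely the gluing equation indexed by that $1$-simplex (Figure \ref{fig:gluing_equation}), asserting that the product of the edge parameters $z_i$, $\tfrac{1}{1-z_i}$, $1-\tfrac{1}{z_i}$ met in one full turn equals $1$, so that the face-pairing isometries compose to the identity and $D$ is well defined.

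Next, for each $\gamma \in \pi_1(M)$ acting as a deck transformation, both $D$ and $D \circ \gamma$ carry each developed tetrahedron to an ideal tetrahedron with the same cross ratios; by Lemma \ref{lem:thrice_transitive} there is a unique $\rho(\gamma) \in \PSLC$ with $D(\gamma p) = \rho(\gamma) D(p)$, and uniqueness forces $\rho$ to be a homomorphism. Replacing the initial placement $D(\widetilde{\Delta})$ by $A \cdot D(\widetilde{\Delta})$ with $A \in \PSLC$ replaces $D$ by $A D$ and hence $\rho$ by $A \rho A^{-1}$, so the conjugacy class, i.e. the point of $X_{PSL}(M)$, is well defined, giving the map $\mathcal{D}(M,T) \to X_{PSL}(M)$. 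Algebraicity is then immediate, since in suitable coordinates the developed vertices, and therefore the entries of each face-pairing matrix and of every $\rho(\gamma)$, are rational functions of the $z_i$.

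Finally, for the boundary statement, each boundary component of $M$ is $\partial N(p)$ for a $0$-simplex $p$ of $T$. Fixing a lift $\widetilde{p}$, every tetrahedron of $\widetilde{M}$ incident to $\widetilde{p}$ develops to an ideal tetrahedron one of whose ideal vertices is $D(\widetilde{p}) \in \mathbb{C}P^1$, so all of them share this single point. The peripheral subgroup $\pi_1(\partial N(p)) \subset \pi_1(M)$ is realized by deck transformations fixing $\widetilde{p}$; hence its holonomy fixes $D(\widetilde{p})$, and the restriction of $\rho$ to this boundary subgroup is reducible. The main obstacle is the single-valuedness argument in the second step: one must argue carefully that edge-consistency propagates to consistency along every loop, that is, that the $1$-skeleton relations generate all the ambiguity in the development. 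This is the standard but delicate argument going back to Thurston and Neumann--Zagier (\cite{thurston}, \cite{neumann-zagier}).
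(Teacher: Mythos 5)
Your proof is correct and follows essentially the same route as the paper: develop a base ideal tetrahedron, use the gluing equations around each $1$-simplex to get a well-defined equivariant map $D:\widetilde{M}\to\mathbb{H}^3$, extract the holonomy via the uniqueness in Lemma \ref{lem:thrice_transitive}, note that changing the initial placement conjugates $\rho$, and observe that the peripheral subgroup of a $0$-simplex fixes the developed image of that ideal vertex. The only point you flag as delicate — that consistency around $1$-simplices suffices for single-valuedness of the development — is likewise taken for granted in the paper, with the same references to Thurston and Neumann--Zagier.
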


We further study the restriction of the representation of Proposition \ref{prop:3-dim} to a boundary subgroup. 
Let $p$ be a $0$-simplex of $T$.
Here $\partial N(p)$ is triangulated by the truncated vertices (Figure \ref{fig:at_infinity}).
By conjugation, we assume that $p$ is developed to $\infty$.
Then the truncated vertices are developed into the Euclidean plane as triangles.
By the observation in the previous subsection, each triangle can be interpreted as a matrix fixing $\infty$ and whose eigenvalues are given by the complex parameter of 
the ideal tetrahedron.
Therefore the monodromy along a curve on the boundary component can be described by these complex parameters.
For example the monodromy of the path indicated in Figure \ref{fig:at_infinity} is given by
\[
\begin{split}
&\begin{pmatrix} \sqrt{w_1} & * \\ 0 & 1/\sqrt{w_1} \end{pmatrix}
\begin{pmatrix} 1/\sqrt{w_2} & * \\ 0 & \sqrt{w_2} \end{pmatrix}
\begin{pmatrix} \sqrt{w_3} & * \\ 0 & 1/\sqrt{w_3} \end{pmatrix}
\begin{pmatrix} 1/\sqrt{w_4} & * \\ 0 & \sqrt{w_4} \end{pmatrix} \\
&= \begin{pmatrix} \sqrt{\frac{w_1w_3}{w_2w_4}} & * \\ 0 & \sqrt{\frac{w_2w_4}{w_1w_3}} \end{pmatrix}
\end{split}
\]
where each $w_k$ is one of $z_i$, $\frac{1}{1-z_i}$ or $1-\frac{1}{z_i}$.
Therefore the eigenvalue of this matrix corresponding to the fixed point $\infty$ is $\sqrt{\frac{w_1w_3}{w_2w_4}}$.
In this way, the square of one of the eigenvalues of the monodromy along a boundary curve has the form 
\[
\pm \prod_i z_i^{m'_i} (1-z_i)^{m''_i}
\]
by some integers $m'_i$ and $m''_i$. 

\begin{figure}
\input{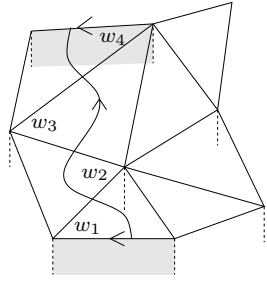}
\caption{Developing map around a $0$-simplex.}
\label{fig:at_infinity}
\end{figure}

The construction of this subsection can work even when $T$ has a free face i.e. some tetrahedra have a face with no pair.
This type of ideal triangulation was used in \cite{kabaya} and will be used in \S \ref{sec:change_to_shear_coodinates}.

\section{Transformation  to exponential shear-bend coordinates}
\label{sec:change_to_shear_coodinates}
%
%


In this section, we describe a transformation of our coordinates into exponential shear-bend coordinates.
We will give such a transformation for one-holed torus, instead of describing general cases.

\subsection{Representations using exponential shear-bend coordinates}
Consider an ideal triangulation $T_1$ of the one-holed torus as in Figure \ref{fig:one_holed_bending}.
We assign complex parameters $a, b, c \in \mathbb{C} \setminus \{0,1\}$ to the edges of the ideal triangulation as in Figure \ref{fig:one_holed_bending}.
We construct a developing map according to these complex parameters as described in \S \ref{subsec:developing_map}.
Fix an ideal triangle whose ideal vertices consisting of $(0, 1, \infty)$.
Then we can develop this ideal triangle according to the complex parameters $a$, $b$ and $c$. 
A part of the developed image is shown in the right of Figure \ref{fig:one_holed_bending}.
We denote the holonomy representation of the developing map by $\rho$.
Take generators $\alpha_1$ and $\beta_1$ of the fundamental group as in Figure \ref{fig:once_punctured}, 
then $\rho({\alpha_1}^{-1})$ is the matrix which sends $(0,\infty, 1)$ to $(1/a, \frac{c-1}{ac}, \infty)$ (see Figure \ref{fig:one_holed_bending}).
So we have 
\[
\rho({\alpha_1}^{-1}) = \frac{1}{\sqrt{ac}} \begin{pmatrix} c-1 & -c \\ ac & -ac \end{pmatrix}.
\] 
Since $\rho(\beta_1)$ is the matrix which sends $(0,\infty, 1)$ to $(\frac{1}{a(1-c)}, 1/a, 0)$, we have 
\[
\rho(\beta_1) = \frac{1}{\sqrt{ab}} \begin{pmatrix} 1  & -1 \\ a & a(b-1) \end{pmatrix}.
\]
For example, we have 
\begin{equation}
\label{eq:one_holed_by_bending}
\tr^2(\rho(\alpha_1)) = \frac{(ca-c+1)^2}{c a}, \quad 
\tr^2(\rho(\beta_1)) =\frac{(ab-a+1)^2}{a b}.
\end{equation}

\begin{figure}
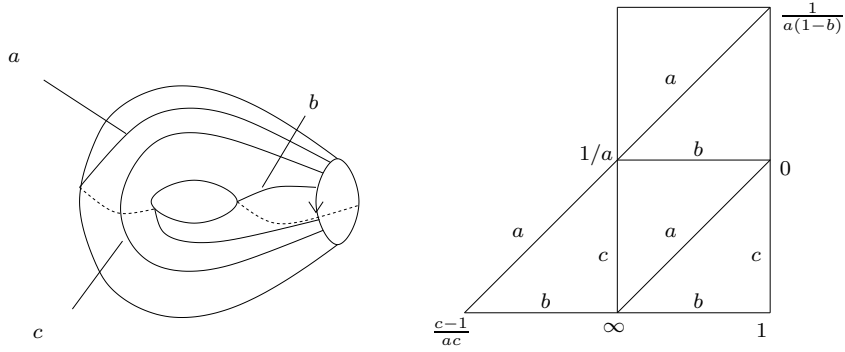

\input{id_tri_of_one_holed.pstex_t}
\hspace{20pt}
\input{id_tri_of_one_holed_cv.pstex_t}
\caption{Ideal triangulation $T_1$.}
\label{fig:one_holed_bending}
\end{figure}

\subsection{Transformation into exponential shearing-bending coordinates}
We give a transformation of our coordinates into exponential shear-bend coordinates by giving a 3-dimensional ideal triangulation 
between two 2-dimensional ideal triangulations.
That is we construct a 3-dimensional ideal triangulation between the ideal triangulation $T_{-1}$ of the left of Figure \ref{fig:one_holed_begin} 
and $T_1$ of Figure \ref{fig:one_holed_bending}.
We also let $T_0$ be the ideal triangulation as indicated in the right of Figure \ref{fig:one_holed_begin}.

Let $e_1,e_2$ be the eigenvalue parameters and $t_1$ the twist parameter as defined in \S \ref{subsec:once_punctured}.
The complex parameters of the edges of $T_{-1}$ can be computed from Proposition \ref{prop:shear_bend_parameter_of_a_pair_of_pants}. 
They are ${e_1}^2/e_2$, $1/({e_1}^2 e_2)$ and $e_2$ as shown in the left of Figure \ref{fig:one_holed_begin}.
  
\begin{figure}
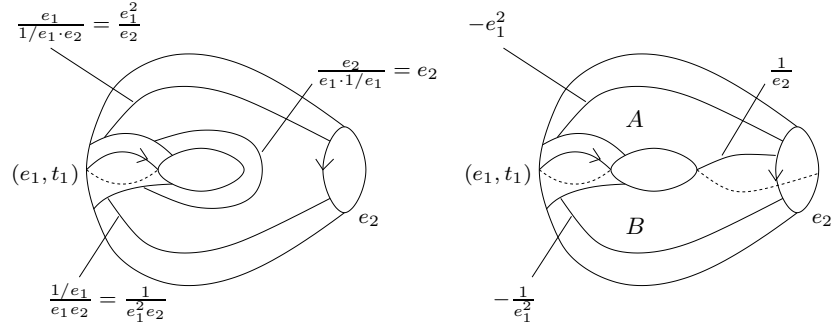

\input{one_holed_begin.pstex_t}
\hspace{30pt}
\input{one_holed_mid.pstex_t}
\caption{The left is the ideal triangulation $T_{-1}$ and the right is $T_0$. 
$T_0$ is obtained from $T_{-1}$ by inserting an ideal tetrahedron with the complex parameter $1/e_2$.}
\label{fig:one_holed_begin}
\end{figure}

\begin{figure}
\input{between_one_holed.pstex_t}
\hspace{30pt}
\input{one_holed_top.pstex_t}
\caption{}
\label{fig:between_once_punt}
\end{figure}

First we exchange the edge of $T_{-1}$ whose complex parameter is $e_2$ by inserting an ideal tetrahedron with the complex parameter $1/e_2$. 
Since we have
\[
\frac{{e_1}^2}{e_2} \frac{1}{1-1/{e_2}} \left( 1- \frac{1}{1/e_2} \right) = -{e_1}^2, \quad 
\frac{1}{{e_1}^2e_2} \frac{1}{1-1/{e_2}} \left( 1- \frac{1}{1/e_2} \right) = -\frac{1}{{e_1}^2},
\]
the edges of $T_0$ have the complex parameters $-{e_1}^2$, $-1/({e_1}^2)$ and $1/{e_2}$ (see the right of Figure \ref{fig:one_holed_begin}).
Next we attach a pair of ideal tetrahedra parametrized by $z_1$ and $z_2$ to $T_0$ as indicated in Figure \ref{fig:between_once_punt}.
In the figure, glue two faces labeled by $C$ in pairs and glue the face labeled by $A$ (resp. $B$) to $A$ (resp. $B$) of $T_0$ 
indicated in the right of Figure \ref{fig:one_holed_begin}. 
Now the remaining two free faces form the ideal triangulation $T_1$ (see Figure \ref{fig:between_once_punt}).
Observe the complex parameters at the edges of $T_1$, we have 
\begin{equation}
\label{eq:a_b_c_by_z1_z2}
a =  \left( 1 - \frac{1}{z_1} \right) \left( 1 - \frac{1}{z_2} \right), \quad
b =  \frac{1}{e_2}  \frac{1}{1-z_1}  \frac{1}{1-z_2}, \quad
c = z_1 z_2.
\end{equation}
Considering the complex parameters around the $1$-simplex at which two edges of $A$ meet, we have
\begin{equation}
\label{eq:relation_of_zi_ei_1}
\frac{1}{1-z_1} \cdot z_2 \cdot \bigl( 1 -\frac{1}{z_2} \bigr) \cdot (-e_1^2) = 1.
\end{equation}
Similarly, considering the complex parameters around the $1$-simplex at which two edges of $B$ meet, we have
\begin{equation}
\label{eq:relation_of_zi_ei_2}
\bigl( 1 -\frac{1}{z_1} \bigr) \cdot z_1 \cdot \frac{1}{1-z_2} \cdot \frac{-1}{e_1^2} = 1.
\end{equation}
(The equation (\ref{eq:relation_of_zi_ei_2}) is equivalent to (\ref{eq:relation_of_zi_ei_1}).)
By the definition of the twist parameter, the matrix which sends $B$ to $A$ has the eigenvalues $\sqrt{-t_1}^{\pm 1}$.
Thus, from the left of Figure \ref{fig:between_once_punt}, we have
\begin{equation}
\label{eq:relation_of_zi_ti}
\frac{z_2}{z_1} = -t_1.
\end{equation}
Solve the equations (\ref{eq:relation_of_zi_ei_1})--(\ref{eq:relation_of_zi_ti}), we have
\[
z_1 = \frac{1-e_1^{2}}{t_1 e_1^{2} + 1}, \quad z_2 = -\frac{t_1(1-e_1^{2})}{t_1 e_1^2 +1 }.
\]
Substitute $z_1$ and $z_2$ into (\ref{eq:a_b_c_by_z1_z2}),  we have
\[
a= - \frac{e_1^2(1+t_1)^2}{t_1(1-e_1^2)^2}, \quad
b= \frac{(t_1e_1^2+1)^2}{e_1^2 e_2 (t_1+1)^2}, \quad
c= - \frac{t_1 (1-e_1^2)^2}{(t_1 e_1^2+1)^2}. 
\]
Substitute these into (\ref{eq:one_holed_by_bending}), we obtain
\[
\begin{split}
\tr^2(\beta_1) &= \frac{(ca-c+1)^2}{c a} = \left(e_1+\frac{1}{e_1}\right)^2, \\
\tr^2(\alpha_1) &=\frac{(ab-a+1)^2}{a b} 
= - \frac{(({e_1}^2-e_2)t_1 + 1 - e_1^2 e_2)^2}{(e_1^2-1)^2 e_2 t_1}.
\end{split}
\]
The results coincide with the calculations of \S \ref{subsec:once_punctured}.

\bibliographystyle{amsalpha}
\bibliography{surface_rep}

\end{document}